\documentclass[11pt, reqno]{amsart}

\usepackage{iftex}

\ifPDFTeX
  \usepackage[T1]{fontenc}
  \usepackage{lmodern}
  \usepackage{xcolor}
  \usepackage{hyperref}
\else
  \usepackage[T1]{fontenc}
  \usepackage{lmodern}
  \usepackage[dvipdfmx]{xcolor}
  \usepackage[dvipdfmx]{hyperref}
\fi

\definecolor{darkblue}{rgb}{0.0, 0.0, 0.55}
\definecolor{darkmagenta}{rgb}{0.55, 0.0, 0.55}
\definecolor{darkcyan}{rgb}{0.0, 0.55, 0.55}

\hypersetup{
  colorlinks = true,
  linkcolor  = darkblue,
  citecolor  = darkmagenta,
  urlcolor   = darkcyan
}

\usepackage{amssymb, amsthm}
\usepackage{mathtools}
\usepackage{mathrsfs}

\usepackage[all]{xy}

\usepackage[
  left=2.5cm,
  right=2.5cm,
  top=2.7cm,
  bottom=2.3cm,
]{geometry}

\usepackage{enumitem}
\setlist[enumerate]{
  label=\textup{(\arabic*)},
  font=\normalfont,
  leftmargin=25pt
}
\setlist[itemize]{
  font=\normalfont,
  leftmargin=25pt
}

\newcommand{\al}{\alpha}
\newcommand{\be}{\beta}

\newcommand{\la}{\lambda}

\newcommand{\si}{\sigma}

\newcommand{\vph}{\varphi}

\newcommand{\om}{\omega}

\newcommand{\La}{\Lambda}

\newcommand{\PP}{{\mathbb P}}

\newcommand{\ZZ}{{\mathbb Z}}
\newcommand{\NN}{{\mathbb N}}

\newcommand{\fm}{{\mathfrak m}}

\def\ang#1{{\langle #1 \rangle}}
\def\rnum#1{\expandafter{\romannumeral #1}}
\def\Rnum#1{\uppercase\expandafter{\romannumeral #1}}

\renewcommand{\mod}{\operatorname{\mathsf{mod}}}
\newcommand{\Mod}{\operatorname{\mathsf{Mod}}}
\newcommand{\proj}{\operatorname{\mathsf{proj}}}
\newcommand{\grmod}{\operatorname{\mathsf{grmod}}}
\newcommand{\grproj}{\operatorname{\mathsf{grproj}}}
\newcommand{\GrMod}{\operatorname{\mathsf{GrMod}}}
\newcommand{\CM}{\operatorname{\mathsf{CM}^{\mathbb Z}}}

\newcommand{\uCM}{\operatorname{\underline{\mathsf{CM}}^{\mathbb Z}}}
\newcommand{\uCMz}{\operatorname{\underline{\mathsf{CM}}_{0}^{\mathbb Z}}}
\newcommand{\qgr}{\operatorname{\mathsf{qgr}}}
\newcommand{\fdim}{\operatorname{\mathsf{fdim}}}
\newcommand{\coh}{\operatorname{\mathsf{coh}}}
\newcommand{\D}{\operatorname{\mathsf{D}}}
\newcommand{\K}{\operatorname{\mathsf{K}}}
\newcommand{\thick}{\operatorname{\mathsf{thick}}}

\newcommand{\Ind}{\operatorname{Ind}}
\newcommand{\Hom}{\operatorname{Hom}}
\newcommand{\End}{\operatorname{End}}

\newcommand{\Ext}{\operatorname{Ext}}

\newcommand{\GrAut}{\operatorname{GrAut}}
\newcommand{\RGamma}{\operatorname R\!\Gamma}

\newcommand{\gr}{\operatorname{gr}}
\newcommand{\RHom}{\operatorname R\!\Hom}
\newcommand{\Lotimes}{\stackrel{\operatorname L}{\otimes}}

\newcommand{\gldim}{\operatorname{gldim}}
\newcommand{\injdim}{\operatorname{injdim}}
\newcommand{\projdim}{\operatorname{projdim}}

\newcommand{\rad}{\operatorname{rad}}
\renewcommand{\H}{\operatorname{H}}
\newcommand{\op}{\operatorname{op}}
\newcommand{\en}{\operatorname{e}}
\newcommand{\id}{\operatorname{id}}
\newcommand{\CMreg}{\operatorname{CMreg}}
\newcommand{\Extreg}{\operatorname{Extreg}}

\newcommand{\Proj}{\operatorname{Proj}}

\theoremstyle{plain} 
\newtheorem{thm}{Theorem}[section]
\newtheorem{cor}[thm]{Corollary}
\newtheorem{lem}[thm]{Lemma}
\newtheorem{prop}[thm]{Proposition}

\theoremstyle{definition}
\newtheorem{dfn}[thm]{Definition}
\newtheorem{ex}[thm]{Example}

\newtheorem{rem}[thm]{Remark}

\numberwithin{equation}{section}

\begin{document}

\title
[Trivial extensions of Koszul Artin-Schelter regular algebras]
{Trivial extensions of \\ Koszul Artin-Schelter regular algebras}

\author{Kenta Ueyama}
\address{Department of Mathematics, Faculty of Science, Shinshu University, 3-1-1 Asahi, Matsumoto, Nagano 390-8621, Japan}
\email{ueyama@shinshu-u.ac.jp}

\subjclass[2020]{16S37, 16G50, 16W50, 18G80}

\keywords{
trivial extension,
Cohen-Macaulay module,
Koszul duality,
Artin-Schelter regular algebra,
Artin-Schelter Gorenstein algebra,
Zhang twist}

\begin{abstract}
Let $S$ be an $\mathbb N$-graded Koszul Artin-Schelter regular algebra and let
$\sigma$ be a graded algebra automorphism of $S$.
We study the stable category of graded maximal Cohen-Macaulay modules over the trivial extension algebra $S\ltimes S_\sigma(-1)$. We show that this category is triangle equivalent to the bounded derived category of finitely generated (ungraded) modules over the Koszul dual algebra of the Zhang twist $S^{\sigma^{-1}}$.
In the connected graded case, we also obtain a criterion for when two such stable categories are triangle equivalent, and show that such an equivalence induces an equivalence between the categories of graded modules over the original algebras.
\end{abstract}

\maketitle

\section{Introduction}

Trivial extension algebras play an essential role in both commutative algebra and the representation theory of finite-dimensional algebras, since they form a rich source of important examples in these areas
(see e.g.\ \cite{AW, FGR, SY}).
In particular, graded trivial extensions arise naturally in the representation theory of finite-dimensional Gorenstein algebras
(see e.g.\ \cite{Ch, Hapb, MY}).

In recent years, the stable categories of graded maximal Cohen-Macaulay modules over graded Gorenstein algebras have been widely studied as triangulated categories related to various areas including representation theory, commutative algebra, and noncommutative algebraic geometry (see e.g.\ \cite{AIR, BIY, Han, HIMO, IKU, IT, MUs, MUk, SV}).

Motivated by these developments, in this paper we study graded Gorenstein algebras arising from trivial extensions that are not necessarily commutative or finite-dimensional.
More precisely, we investigate the stable category $\uCM(S \ltimes L)$ of graded right maximal Cohen-Macaulay modules over the trivial extension algebra $S \ltimes L$, where $S$ is a $\NN$-graded Koszul Artin-Schelter (AS) regular algebra and $L$ is a graded invertible $S$-bimodule generated in degree $1$.
Our main result describes this category in terms of the Koszul dual of a twisted algebra of $S$.

\begin{thm}[Theorem \ref{tmain1a}]\label{tmain1}
Let $S$ be an $\NN$-graded Koszul AS-regular algebra of dimension $d$ and let $\sigma\in\GrAut(S)$ be a graded algebra automorphism of $S$. Consider the graded $S$-bimodule
$L := S_\sigma(-1)$ in $\GrMod S^{\en}$.
Then there are equivalences of triangulated categories
\begin{equation*}
\xymatrix@R=0.5cm@C=1.25cm{
\uCM(S \ltimes L)\ar[r]^-{\sim} 
& \D^b(\mod ((S^{\sigma^{-1}})^!)^{\op})\\
\thick(\Omega^d (S \ltimes L)_0(d)) \ar[r]^-{\sim} \ar@{}[u]|{\rotatebox{90}{$\subset$}}
& \K^b(\proj ((S^{\sigma^{-1}})^!)^{\op}), \ar@<2ex>@{}[u]|{\rotatebox{90}{$\subset$}}
}
\end{equation*}
where $(S^{\sigma^{-1}})^!$ denotes the Koszul dual algebra of the Zhang twist $S^{\sigma^{-1}}$, 
and $\thick(\Omega^d (S \ltimes L)_0(d))$ is the thick subcategory of $\uCM(S \ltimes L)$ generated by $\Omega^d (S \ltimes L)_0(d)$.
\end{thm}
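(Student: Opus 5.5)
The plan is to reduce to the untwisted case $\sigma=\id$ via the Zhang twist, and then treat that case with Koszul duality and Orlov-type/BGG arguments.

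\textbf{Step 1 (Zhang twist).} The trivial extension $A:=S\ltimes S_\sigma(-1)$ is $\NN$-graded with $A_0=S_0$. I would extend $\sigma$ to a graded automorphism $\tilde\sigma$ of $A$ acting diagonally, which is legitimate since $\sigma$ is an automorphism of the bimodule $S_\sigma$ compatible with the multiplication. I would then compute that the Zhang twist $A^{\tilde\sigma^{-1}}$ is isomorphic to $S^{\sigma^{-1}}\ltimes S^{\sigma^{-1}}(-1)$. The point is that twisting by $\sigma^{-1}$ untwists the bimodule action on the second summand, and the degree shift by $-1$ makes the twist of $L$ come out with exactly one power of $\sigma$ to cancel. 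Zhang's theorem gives $\GrMod A\simeq\GrMod A^{\tilde\sigma^{-1}}$. This equivalence preserves finite generation, projectives, graded Ext and the shift functor, so it induces a triangle equivalence $\uCM(A)\simeq\uCM(A^{\tilde\sigma^{-1}})$ that sends $\Omega^dA_0(d)$ to the corresponding object. Since $S^{\sigma^{-1}}$ is again Koszul AS-regular of dimension $d$, it suffices to treat $\sigma=\id$, that is $A=S\ltimes S(-1)$ with $S$ Koszul AS-regular.

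\textbf{Step 2 (the case $\sigma=\id$).} Here $A\cong S[x]/(x^2)$ with $\deg x=1$ and $x$ central. This is Koszul, and its Koszul dual is $A^!\cong S^![y]$ with $y$ central of degree $1$. The algebra $A$ is AS-Gorenstein of dimension $d$, being a graded Frobenius-type extension of $S$ with $\Ext$ computations inherited from $S$.

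I would use Buchweitz's equivalence $\uCM A\simeq \D^b(\grmod A)/\K^b(\grproj A)$, combined with Koszul duality (BGG) $\D^b(\grmod A)\simeq \D^b(\grmod A^{!\op})$ on finitely generated pieces. This applies because $A^!$ is noetherian: $S^!$ is finite-dimensional Frobenius, so $A^!=S^![y]$ is finite over $k[y]$. Under Koszul duality, $\K^b(\grproj A)$ corresponds to the objects with finite-dimensional total cohomology. Hence
$$\uCM A\simeq \D^b(\grmod S^{!\op}[y])/\D^b(\fdim)\simeq \D^b(\qgr S^{!\op}[y]).$$
Since $y$ is central of degree $1$ and regular, with $S^![y]/(y)=S^!$, we get $\qgr S^![y]\simeq\mod S^!$ through the localization $M\mapsto (M[y^{-1}])_0$. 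This yields $\uCM(A)\simeq\D^b(\mod (S^!)^{\op})$.

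\textbf{Step 3 (tracking the generator).} The simple $A_0$ corresponds under Koszul duality to $A^!$ itself. Accounting for the shift $\Omega^d(-)(d)$ and the AS-index, $\Omega^dA_0(d)$ should go to the free module $(S^!)^{\op}$ up to shift. Therefore $\thick(\Omega^dA_0(d))$ maps onto $\K^b(\proj (S^!)^{\op})$.

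\textbf{Main obstacle.} I expect the hardest step to be the precise identification in Step 2: showing that $\K^b(\grproj A)$ corresponds exactly to the finite-dimensional-cohomology part under Koszul duality. This needs bounded/finiteness conditions, and $A$ is infinite-dimensional, so I would first try the Beilinson–Ginzburg–Soergel truncation arguments. A second delicate point is getting the grading conventions for the Zhang twist in Step 1 exactly right.
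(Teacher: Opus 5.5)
Your overall plan matches the paper's: Step 1 is exactly Lemmas \ref{lt1} and \ref{ltt}, and you then use Buchweitz and Koszul duality to reach $\D^b(\qgr A^!)$ and localize at a normal element of $A^!$. However, your computation of $A^!$ is wrong. Because $x$ commutes with $S_1$, the orthogonal complement of the mixed relations $s\otimes x-x\otimes s$ is spanned by $s^*\otimes z+z\otimes s^*$ (writing $z=x^*$). So $A^!\cong S^![z;-1]$, and $z$ \emph{anticommutes} with $S^!_1$ rather than being central. For example, with $S=k[t]$ and $\operatorname{char}k\neq 2$, $A^!=k\langle u,z\rangle/(u^2,uz+zu)$ is noncommutative, so it is not $k[u]/(u^2)[y]$.

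Localizing at $z$ still gives $\qgr A^!\simeq\mod A^![z^{-1}]_0$. But now $A^![z^{-1}]_0$ is $S^!$ with the sign-twisted product $a*b=(-1)^{pq}ab$ for $a\in S^!_p$, $b\in S^!_q$. You need the extra observation that this cocycle is a coboundary: $a\mapsto(-1)^{p(p-1)/2}az^{-p}$ is an algebra isomorphism $S^!\to A^![z^{-1}]_0$. This is Lemma \ref{lCa}(5); the paper localizes at the central element $g=z^2$, with $A^!/(g)\cong S[x]^!$. Your final algebra is correct, but only for this reason.

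The second gap is in the step you call the main obstacle, and you locate it in the wrong place. Matching $\thick(\grproj A)$ with $\thick(\fdim A^!)$ is immediate from $A(j)\mapsto A^!_0(j)[-j]$, once you know Koszul duality restricts to bounded derived categories of finitely generated modules. The real issue is that restriction itself. BGS truncation will not supply it: their equivalence lives on $\D^{\downarrow}$ and $\D^{\uparrow}$, and $\D^b(\grmod A)\not\subset\D^{\downarrow}(\GrMod A)$ for infinite-dimensional $A$. The paper instead uses Mori's contravariant functor $\mathfrak{F}_A=F_{A^{\op}}\circ D$, which is why a final $k$-duality and the opposite algebra appear.

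The missing idea is a regularity bound. Over a Koszul AS-Gorenstein algebra, $\Extreg X\le\CMreg X<\infty$ for $0\neq X\in\D^b(\grmod A)$, by J\o rgensen's argument via local duality. This gives $\inf\mathfrak{F}_A(X)=-\Extreg X>-\infty$; boundedness above and finite generation of the terms come from the construction. The inverse functor needs the same bound over $A^!$, so noetherianity of $A^!$ is not the relevant input. You need $A^!$ to be AS-Gorenstein. This follows from Proposition \ref{pLe}, because $z^2$ is a regular central element and $A^!/(z^2)\cong S[x]^!$ is AS-Gorenstein of dimension $0$ by Proposition \ref{pfds}.
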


Theorem \ref{tmain1} can be viewed as a form of Koszul duality. However, unlike the usual form of Koszul duality, which relates derived categories of graded modules, the equivalences in Theorem \ref{tmain1} involve the derived category of ungraded modules.

As a special case, when $S$ is connected graded and $L = \omega_S(d-1)$ is the shifted canonical bimodule, Theorem~\ref{tmain1} gives an equivalence
\begin{equation*}
\uCM(S \ltimes \omega_S(d-1)) \simeq \D^b(\mod ((S^{\nu^{-1}})^!)^{\op}),
\end{equation*}
where $\nu$ is the Nakayama automorphism of $S$ (see Corollary \ref{cmain1}).
In particular, when $S = k[x_1,\dots,x_d]$, this equivalence specializes to
\begin{equation*}
\uCM(k[x_1,\dots,x_d,y]/(y^2)) 
\simeq 
\D^b(\mod \Lambda^d),
\end{equation*}
where $\Lambda^d$ denotes the exterior algebra in $d$ variables.

The key ingredient in the proof of Theorem \ref{tmain1} is the following BGG-type correspondence.

\begin{thm}[Theorem \ref{t.qcd}]\label{t.qcdintro}
Let $A$ be an $\NN$-graded Koszul AS-Gorenstein algebra.
Assume that $A^!$ is AS-Gorenstein.
Then there exists a duality $\uCM(A) \to \D^b(\qgr A^!)$,
where $\qgr A^!$ is the Serre quotient of the category of finitely generated graded right $A^!$-modules modulo the subcategory of finite-dimensional modules.
\end{thm}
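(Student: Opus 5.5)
We prove the duality by composing three standard equivalences. The first is Buchweitz's theorem, which identifies $\uCM(A)$ with the graded singularity category. The second is graded Koszul duality between $A$ and $A^!$. The third is the Orlov-type identification of the relevant Verdier quotient of $\D^b(\grmod A^!)$ with $\D^b(\qgr A^!)$.

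\textbf{Step 1: reduction to the singularity category.} Since $A$ is AS-Gorenstein, it is graded Iwanaga--Gorenstein. Buchweitz's theorem then gives a triangle equivalence
\[
\uCM(A)\simeq \D^b(\grmod A)/\K^b(\grproj A)=:\D_{sg}^{\ZZ}(A).
\]

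\textbf{Step 2: Koszul duality on the finite-dimensional side.} Since $A$ is Koszul, the Koszul duality functor $\RHom_A(-,A_0)$, made bigraded in the standard way, gives a duality between derived categories of graded modules. Because $A^!$ is AS-Gorenstein, it is in particular left and right noetherian of finite injective dimension, and $A$ is likewise noetherian. This should let us restrict the duality to
\[
\D^b(\grmod A)\;\simeq\;\D^b(\grmod A^!)^{\op}.
\]
We need to track which subcategories correspond under it:
\begin{itemize}
\item simple modules $A_0(i)$ correspond to graded projectives $A^!(j)$;
\item graded projectives $A(i)$ correspond to simples $A^!_0(j)$.
\end{itemize}
Consequently $\K^b(\grproj A)=\thick\{A(i)\}$ is sent to $\thick\{A^!_0(j)\}=\D^b_{\fdim}(\grmod A^!)$, the complexes with finite-dimensional total cohomology.

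\textbf{Step 3: passage to $\qgr$.} Passing to Verdier quotients, we obtain
\[
\uCM(A)\simeq \bigl(\D^b(\grmod A^!)/\D^b_{\fdim}(\grmod A^!)\bigr)^{\op}.
\]
The last step is the standard identification of this quotient with $\D^b(\qgr A^!)$. It holds because $\qgr A^!=\grmod A^!/\fdim A^!$ is a Serre quotient of abelian categories, so the derived category of the quotient is the quotient of derived categories by complexes with cohomology in the Serre subcategory (Miyachi). Composing the three steps gives the duality $\uCM(A)\to\D^b(\qgr A^!)$.

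\textbf{Main obstacle.} The hard part is Step 2, namely getting the Koszul duality functor to land in bounded complexes of finitely generated modules on both sides. The issue is that $\RHom_A(M,A_0)$ of a finitely generated $A$-module $M$ is generally an infinite complex, or an unbounded module over $A^!$. Here both AS-Gorenstein hypotheses are needed.
\begin{itemize}
\item AS-Gorensteinness of $A$ (for instance through the $\Ext$-regularity or Castelnuovo--Mumford regularity of $M$) ensures that the resulting $A^!$-module has bounded generation and finite "linear part".
\item AS-Gorensteinness of $A^!$ ensures that finite-dimensional $A$-modules, which correspond to perfect-type objects over $A^!$, yield bounded complexes. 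Finite-dimensionality alone is not enough here, since $A^!$ need not have finite global dimension.
\end{itemize}
We would first establish the equivalence between the thick subcategories $\thick\{A_0(i)\}$ and $\K^b(\grproj A^!)$, which holds simply because $A$ is Koszul. We would then use the Gorenstein conditions, for example via local duality and $\Ext$-finiteness, to extend it to all of $\D^b(\grmod A)$. The finite-dimensional/projective correspondence in Step 2 is formal once the functor is defined on bounded complexes.
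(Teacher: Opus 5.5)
Your architecture is exactly the paper's: Buchweitz's equivalence, a Koszul duality $\D^b(\grmod A)\to\D^b(\grmod A^!)$ sending $A(i)$ to shifts of $A^!_0$ and $A_0(i)$ to shifts of $A^!$, the induced duality of Verdier quotients by $\thick(\grproj A)$ and $\thick(\fdim A^!)$, and the identification of the latter quotient with $\D^b(\qgr A^!)$. The paper cites Mart\'inez Villa--Saor\'in for this last step; Miyachi's theorem works equally well.

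The gap is that Step~2, the only nontrivial step, is asserted (``this should let us restrict'') rather than proved, and the strategy you sketch for it does not work. Full faithfulness on $\thick\{A_0(i)\}$ cannot be propagated to $\D^b(\grmod A)$, because $\D^b(\grmod A)$ is not generated by that subcategory; it does not even contain $A$ unless $A$ is finite-dimensional. Noetherianity and finite injective dimension alone also say nothing about boundedness of Koszul duals. What you need is to start from a duality already defined on a large category containing $\D^b(\grmod A)$. The paper uses Mori's functor $\mathfrak{F}_A=F_{A^{\op}}\circ D$, which by Beilinson--Ginzburg--Soergel is a duality $\D^{\uparrow}_{lf}(\GrMod A)\leftrightarrows\D^{\uparrow}_{lf}(\GrMod A^!)$ with quasi-inverse $\mathfrak{G}_{A^!}$. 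One then proves that $\mathfrak{F}_A$ and $\mathfrak{G}_{A^!}$ each carry bounded complexes with finitely generated cohomology to such complexes.

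You guessed the right tool, regularity, but that argument is the real content of the theorem and has to be supplied. Take $X\in\D^b(\grmod A)$. The terms of $\mathfrak{F}_A(X)$ are finitely generated graded projective $A^!$-modules and the complex is bounded above, so only the lower bound is at stake. Since $\mathfrak{F}_A(A_0)=A^!$, the isomorphism $\Ext_A^{\ell}(X,Y)_m\cong\Ext_{A^!}^{\ell+m}(\mathfrak{F}_A(Y),\mathfrak{F}_A(X))_{-m}$ with $Y=A_0$ gives $\inf\mathfrak{F}_A(X)=-\Extreg X$. The key input is then $\Extreg X\le\CMreg X<\infty$:
\begin{itemize}
\item Finiteness of $\CMreg X$ follows from local duality, $D\RGamma_{\fm}(X)\cong\RHom_A(X,\omega_A)[d]\in\D^b(\grmod A^{\op})$.
\item The inequality comes from the spectral sequence $\Ext^i_A(\H^{-j}_{\fm}(X),A_0)\Rightarrow\Ext^{i+j}_A(X,A_0)$, which arises from $\RHom_A(\RGamma_{\fm}(X),A_0)\cong\RHom_A(X,A_0)$.
\item This is combined with the injective resolution of $A_0$ obtained by dualizing the linear projective resolution of $A_0$ over the Koszul algebra $A^{\op}$. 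Its $i$-th term is a summand of a finite sum of copies of $D(A(-i))$, which gives the needed degree vanishing.
\end{itemize}

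Your second bullet also misplaces the role of the hypothesis on $A^!$. Finite-dimensional $A$-modules go to perfect $A^!$-complexes with no assumption on $A^!$ at all. AS-Gorensteinness of $A^!$ is needed for the reverse inclusion $\mathfrak{G}_{A^!}(\D^b(\grmod A^!))\subset\D^b(\grmod A)$, i.e.\ essential surjectivity of the restricted functor. It is obtained by running the same regularity argument over the Koszul algebra $A^!$, whose Koszul dual is $A$.
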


Next, in the connected graded case, we study when two stable categories as in Theorem \ref{tmain1} are triangle equivalent.

\begin{thm}[Theorem \ref{tmain2a}]\label{tmain2}
Let $S$ and $R$ be connected graded Koszul AS-regular algebras,
and let $\sigma\in\GrAut(S)$ and $\tau\in\GrAut(R)$.
Consider the graded bimodules
$L := S_\sigma(-1)$ in $\GrMod S^{\en}$ and $N := R_\tau(-1)$ in $\GrMod R^{\en}$. Then the following conditions are equivalent:
\begin{enumerate}[label=(\roman*)]
\item $\uCM(S \ltimes L) \simeq \uCM(R \ltimes N)$ as triangulated categories.
\item $S^{\sigma^{-1}} \cong R^{\tau^{-1}}$ as graded algebras.
\end{enumerate}
In particular, if these conditions hold, then we have an equivalence $\GrMod S \simeq \GrMod R$.
\end{thm}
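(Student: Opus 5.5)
By Theorem \ref{tmain1}, condition (i) is equivalent to a triangle equivalence
\[
\D^b(\mod ((S^{\sigma^{-1}})^!)^{\op}) \simeq \D^b(\mod ((R^{\tau^{-1}})^!)^{\op}).
\]
The plan is to translate (i) into derived equivalence of the finite-dimensional Koszul duals $E:=(S^{\sigma^{-1}})^!$ and $F:=(R^{\tau^{-1}})^!$, and then use the special structure of these algebras to upgrade derived equivalence to isomorphism.

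For (ii)$\Rightarrow$(i), a graded algebra isomorphism $S^{\sigma^{-1}}\cong R^{\tau^{-1}}$ induces $E\cong F$ as graded, hence ungraded, algebras. Theorem \ref{tmain1} then gives the chain
\[
\uCM(S\ltimes L)\simeq \D^b(\mod E^{\op})\simeq \D^b(\mod F^{\op})\simeq \uCM(R\ltimes N).
\]
The converse is the main point. First, since a Zhang twist of a connected graded Koszul AS-regular algebra is again such an algebra, $E$ is a finite-dimensional connected graded Frobenius Koszul algebra. In particular $E$ is local as an ungraded algebra, because $E_0=k$ and $E_{\ge1}$ is a nilpotent ideal. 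A derived equivalence between ungraded local finite-dimensional algebras is induced by a tilting complex. By the result that tilting complexes over local algebras are shifts of free modules (Rouquier--Zimmermann; or via the Grothendieck group, which has rank one, together with indecomposability), local derived equivalent algebras are Morita equivalent. Basicness then gives $E\cong F$ as ungraded algebras. I expect this step to be fairly clean.

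The main obstacle is passing from an ungraded isomorphism $E\cong F$ to a graded one, since Koszul duality requires the grading. I would use that $E$ is Koszul, hence quadratic, so its grading is the one associated to the radical filtration:
\[
E \cong \gr_{\rad} E=\bigoplus_i \rad^iE/\rad^{i+1}E,
\]
because $\rad^i E = E_{\ge i}$ for an algebra generated in degree $1$. An ungraded isomorphism $E\to F$ preserves radical powers, so it induces a graded isomorphism $\gr_{\rad}E\cong\gr_{\rad}F$, that is, $E\cong F$ as graded algebras. Taking Koszul duals, and using $(A^!)^!\cong A$ for Koszul $A$, gives $S^{\sigma^{-1}}\cong R^{\tau^{-1}}$, which is (ii). The opposite-algebra conventions need checking, but they are symmetric on both sides.

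Finally, for the last assertion, Zhang's theorem gives $\GrMod S\simeq \GrMod S^{\sigma^{-1}}$ and $\GrMod R\simeq\GrMod R^{\tau^{-1}}$. Combined with (ii), this yields $\GrMod S\simeq\GrMod R$.
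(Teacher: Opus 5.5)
Your proposal is correct and follows the paper's proof essentially step by step. Theorem~\ref{tmain1a} reduces (i) to a derived equivalence of the finite-dimensional local algebras $(S^{\sigma^{-1}})^!$ and $(R^{\tau^{-1}})^!$; Rouquier--Zimmermann then gives a Morita equivalence and hence an ungraded isomorphism; the radical-filtration argument (Lemma~\ref{lung}) upgrades this to a graded isomorphism; and Koszul duality together with Zhang's theorem finishes the proof. The only cosmetic difference is that the paper removes the opposite algebras via $k$-duality and proves Morita $\Rightarrow$ isomorphism via $B\cong M_n(A)$, whereas you apply the local-algebra argument directly to the opposite algebras and invoke basicness, which is equally valid.
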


It is noteworthy that a triangle equivalence between the stable categories associated to the trivial extensions $S \ltimes L$ and $R \ltimes N$ forces an equivalence between the categories of graded modules over the original algebras $S$ and $R$.

In particular, when $L = \omega_S(d-1)$ and $N = \omega_R(d'-1)$, Theorem \ref{tmain2} implies
\[
S \cong R \ \ \Longrightarrow \ \  \uCM(S \ltimes \omega_S(d-1)) \simeq \uCM(R \ltimes \omega_R(d'-1)) \ \ \Longrightarrow \ \ \GrMod S \simeq \GrMod R
\]
(see Corollary \ref{cmain2}).
By considering the case where $S$ and $R$ are skew polynomial algebras in three variables, we find that the converse of each implication above does not hold in general (see Example~\ref{exntd3}).

This paper is organized as follows. Section 2 collects notation, definitions, and basic results needed for later sections, including AS-regular algebras, Koszul algebras, and maximal Cohen-Macaulay modules.
In Section 3, we study Koszul duality and prove Theorem \ref{t.qcdintro}.
In Section 4, we discuss trivial extensions of graded algebras and their relations to Ore extensions and Zhang twists.
Section 5 is devoted to the proofs of Theorems \ref{tmain1} and \ref{tmain2}.
Finally, Section 6 presents examples of the main results.

\section{Preliminaries}

In this section, we fix notation and recall definitions and basic results that will be needed in the sequel.

Throughout this paper, $k$ denotes a field, and all vector spaces, algebras, and tensor products are taken over $k$ unless otherwise specified.

Let $A$ be an algebra.
We write $\Mod A$ (resp.\ $\mod A$) for the category of right $A$-modules (resp.\ finitely generated right $A$-modules).
We denote by $\proj A$ the full subcategory of $\mod A$ consisting of finitely generated projective modules.
Let $A^{\op}$ be the opposite algebra of $A$, and define
$A^{\en} := A^{\op} \otimes A$ to be the enveloping algebra of $A$.
Then the category of left $A$-modules identifies with $\Mod A^{\op}$, and the category of $A$-bimodules identifies with $\Mod A^{\en}$.

Let $\mathcal{A}$ be an abelian category. We denote by $\K(\mathcal{A})$ the homotopy category of complexes over $\mathcal{A}$, and by $\D(\mathcal{A})$ its derived category. We write $\K^-(\mathcal{A})$, $\K^+(\mathcal{A})$, $\K^b(\mathcal{A})$ (resp.\ $\D^-(\mathcal{A})$, $\D^+(\mathcal{A})$, $\D^b(\mathcal{A})$) for the full subcategories consisting of complexes that are  bounded above, bounded below, and bounded, respectively.

Let $\mathcal{T}$ be a triangulated category. 
For a collection $\mathcal{X}$ of objects in $\mathcal{T}$, 
we denote by $\thick(\mathcal{X}) \subset \mathcal{T}$ the smallest thick subcategory containing $\mathcal{X}$.

\subsection{Graded modules}
Let $A=\bigoplus_{i \in \ZZ} A_i$ be a $\ZZ$-graded algebra.
We write $A_{\geq s} := \bigoplus_{i \geq s} A_i$.
We say that $A$ is \emph{$\NN$-graded} if $A_i=0$ for all $i<0$, and that it is \emph{connected graded} if $A$ is $\NN$-graded and $A_0 = k$. 

We write $\GrMod A$ (resp.\ $\grmod A$) for the category of $\ZZ$-graded right $A$-modules (resp.\ finitely generated $\ZZ$-graded right $A$-modules) with degree-preserving $A$-module homomorphisms. 
We denote by $\grproj A$ the full subcategory of $\grmod A$ consisting of finitely generated graded projective modules.
For a graded module $M \in \GrMod A$ and an integer $n$,
we define the \emph{shift} $M(n) \in \GrMod A$ by $M(n)_i := M_{n+i}$ for all $i \in \ZZ$.
For $M, N \in \GrMod A$, we set
$\Ext^i_A(M,N)
:= \bigoplus_{s\in \ZZ} \Ext^i_{\GrMod A}(M, N(s))$.

Let $\GrAut(A)$ denote the group of graded algebra automorphisms of $A$.
We define the \emph{twist} of a graded bimodule by a graded algebra automorphism as follows.

\begin{dfn}
Let $A$ be a $\ZZ$-graded algebra and let $\sigma \in \GrAut(A)$.
For a graded $A$-bimodule $M$,
we define the graded $A$-bimodule $M_\sigma$ by setting
$M_\sigma := M$ with bimodule action $b \cdot m \cdot a := b\, m\, \sigma(a)$ for $a,b\in A, m\in M$.
\end{dfn}

A graded bimodule $L \in \GrMod A^{\en}$ is \emph{graded invertible} if there exists a graded bimodule $N \in \GrMod A^{\en}$ such that $L \otimes_A N \cong N \otimes_A L\cong A$ in $\GrMod A^{\en}$. If $L \cong A_\sigma(s)$ for some $\sigma \in \GrAut(A)$ and $s \in \ZZ$, then $L$ is a graded invertible bimodule.
By \cite[Proposition 1.10]{Ye}, the converse holds if $A$ is connected graded.

\subsection{Noncommutative projective schemes}
Let $A$ be a (right and left) noetherian $\NN$-graded algebra.
We denote by $\fdim A$ the full subcategory of $\grmod A$ consisting of finite-dimensional modules over $k$.
The Serre quotient category
\[ \qgr A := \grmod A/\fdim A\]
is an abelian category.
If $A$ is a connected graded commutative algebra finitely generated in degree $1$, then by Serre's theorem 
there is an equivalence of categories
$\qgr A \simeq \coh(\Proj A)$,
where $\coh(\Proj A)$ denotes the category of coherent sheaves on the projective scheme $\Proj A$.
Motivated by this correspondence, the category $\qgr A$ is called the \emph{noncommutative projective scheme} associated with $A$.
The study of noncommutative projective schemes is one of the central topics in noncommutative algebraic geometry; see \cite{AZ} for foundational results.

\subsection{AS-regular algebras and Koszul algebras}

Artin-Schelter (AS) regular algebras, defined below, provide a natural noncommutative analogue of polynomial rings and play a central role in noncommutative algebraic geometry.

\begin{dfn}
An $\NN$-graded algebra $A$ is called
\emph{AS-regular} (resp.\ \emph{AS-Gorenstein}) if
\begin{itemize}
\item $A$ is noetherian. 
\item $A_0$ is a basic finite-dimensional semisimple algebra, i.e., $A_0$ is isomorphic to a finite direct product of copies of $k$,
\item $\gldim A = d < \infty$ (resp.\ $\injdim_A A = \injdim_{A^{\op}} A = d < \infty$),
\item $\RHom_A(-,A)[d]$ induces a bijection from the set of isomorphism classes of graded simple right $A$-modules to the set of isomorphism classes of graded simple left $A$-modules.
\end{itemize}
We call $d$ the \emph{dimension} of $A$.

In particular, if $A$ is connected graded,
then the fourth condition is equivalent to the following:
\begin{itemize}
\item $\Ext^i_A(k, A) \cong
\begin{cases}
k(\ell)\ \text{for some $\ell\in\ZZ$}& \text{if } i = d,\\
0       & \text{if } i \neq d.
\end{cases}$
\end{itemize}
We call $\ell$ the \emph{Gorenstein parameter} of $A$.
\end{dfn}

\begin{rem}
\begin{enumerate}
\item Let $A$ be an AS-regular algebra. For any simple graded module $N \in \GrMod A$, we have $\dim_k\Ext_A^i(N,A)=\delta_{i,d}$ and $\projdim_AN=d$.
\item For variations of the fourth condition in the definition of AS-regular algebras, see \cite[Section 5.1]{RR}.
\end{enumerate}

\end{rem}

The following result is a consequence of
\cite[Propositions 3.4, 3.5 and Theorem 3.6]{Le}.
\begin{prop}\label{pLe}
Let $A$ be an $\NN$-graded algebra and let $f$ be a homogeneous regular normal element of $A$ of positive degree.
Then $A$ is AS-Gorenstein of dimension $d+1$ if and only if $A/(f)$ is AS-Gorenstein of dimension $d$.
\end{prop}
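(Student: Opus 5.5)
The plan is a change-of-rings argument along the surjection $A \to \bar A := A/(f)$, computing $\RHom$ over each ring on simple modules. Write $e = \deg f > 0$. Both directions are local statements about injective dimension and about the Ext groups of simple modules. Following \cite{Le}, I would split the argument into three parts: comparison of injective dimensions, comparison of the Gorenstein condition on simples, and a reduction to modules killed by $f$.

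\textbf{Ext of $\bar A$-modules.} Since $f$ is regular and normal, there is an exact sequence of graded $A$-bimodules
\[
0 \to A_{\nu}(-e) \xrightarrow{\,f\,} A \to \bar A \to 0,
\]
where $\nu$ is the automorphism defined by $fa = \nu(a)f$. For a graded right $\bar A$-module $M$, apply $\RHom_A(M,-)$ to this sequence. Multiplication by $f$ acts as zero on $\Ext_A^i(M,A)$, because $Mf=0$ (up to a twist by $\nu$). Hence the long exact sequence splits into short exact sequences, giving the Rees-type isomorphism
\[
\Ext^{i+1}_A(M,A) \cong \Ext^i_{\bar A}(M,\bar A)
\]
up to a twist by $\nu$ and a degree shift by $e$. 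The standard way to get this is to use $\RHom_A(M,A)\cong \RHom_{\bar A}(M,\RHom_A(\bar A,A))$ together with $\RHom_A(\bar A,A)\cong \bar A_{\nu'}(e)[-1]$, which the same exact sequence yields.

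\textbf{The Gorenstein bijection on simples.} Every graded simple $A$-module is annihilated by $f$, because $f$ lies in the graded radical: its degree is positive, and $A$ is $\NN$-graded with semisimple $A_0$. So the graded simple modules of $A$ and of $\bar A$ coincide. By the isomorphism above, $\RHom_A(N,A)[d+1]\cong \RHom_{\bar A}(N,\bar A)[d]$ up to twist and shift. Twisting by the automorphism $\nu$ permutes the simple left modules, and shifting by degrees preserves simplicity. Therefore the fourth condition for $A$ in dimension $d+1$ is equivalent to the fourth condition for $\bar A$ in dimension $d$, and the dimension index moves by exactly one.

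\textbf{Injective dimension and noetherianity.} Noetherianity passes from $A$ to $\bar A$ automatically. The converse direction also holds: a graded ring that is $f$-adically filtered by a normal regular element of positive degree is noetherian when its quotient is. For injective dimension, I would use the graded criterion that $\injdim_A A = \sup\{ i : \Ext^i_A(N,A)\neq 0 \text{ for some graded simple } N\}$, which is valid for noetherian $\NN$-graded algebras with semisimple degree-$0$ part (Auslander--Buchsbaum type; this is the content of \cite[Prop.~3.4]{Le}). Combined with the Rees isomorphism, this gives $\injdim_A A = \injdim_{\bar A}\bar A + 1$, on each side.

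The main obstacle is justifying this reduction to simples in the graded non-connected setting, in particular its finiteness direction: finite injective dimension has to be detected by Ext against simples. I would treat it by first reducing to finite-dimensional modules, filtering them by simples, and then passing to arbitrary finitely generated modules via local cohomology, or equivalently by induction along $f$. At that point I would simply cite \cite[Theorem 3.6]{Le}.
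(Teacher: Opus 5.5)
Your Rees isomorphism, the transfer of the fourth condition via simples, and the lifting of noetherianity are all fine. For the Rees isomorphism, the adjunction $\RHom_A(M,A)\cong\RHom_{\bar A}(M,\RHom_A(\bar A,A))$ together with $\RHom_A(\bar A,A)\cong\bar A_{\nu'}(e)[-1]$ is the argument that works. The sentence before it is a non sequitur: the split short exact sequences compare $\Ext^i_A(M,A)$ with $\Ext^i_A(M,\bar A)$, not with $\Ext^i_{\bar A}(M,\bar A)$. These pieces already settle the direction $A\Rightarrow\bar A$ with no criterion on simples. The Rees isomorphism holds for every graded $\bar A$-module, so $\injdim_{\bar A}\bar A\le d$, and the simples give equality.

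The gap is in the converse, at the step $\injdim_A A\le d+1$. You rely on a ``graded criterion'': for an arbitrary noetherian $\NN$-graded algebra with semisimple $A_0$, vanishing of $\Ext^i_A(N,A)$ on simples for $i\gg0$ would force finite injective dimension. That is a noncommutative Bass theorem with no general proof available; Bass's argument localizes at primes. Your sketch of it is also circular. Passing from finite-dimensional to arbitrary finitely generated modules ``via local cohomology'' needs something like Lemma~\ref{l.hh}(2), i.e.\ local duality (Proposition~\ref{p.canm}(3)), which presupposes that $A$ is already AS-Gorenstein. Falling back on \cite{Le}, which is all the paper does, is acceptable, but then the criterion does no work, and \cite{Le} does not directly cover non-connected $A_0$.

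The ``induction along $f$'' you mention in passing is what actually works, and it is a single Nakayama step. First lift noetherianity. For $M\in\grmod A$, the right-module sequence $0\to A(-e)\xrightarrow{f\cdot}A\to\bar A\to0$ gives an exact sequence
\[
\Ext^i_A(M,A)(-e)\xrightarrow{\ f\cdot\ }\Ext^i_A(M,A)\to\Ext^i_A(M,\bar A).
\]
Since ${}_A\bar A$ has projective dimension $1$, $M\Lotimes_A\bar A$ has cohomology only in degrees $-1$ and $0$. Hence $\Ext^i_A(M,\bar A)\cong h^i\RHom_{\bar A}(M\Lotimes_A\bar A,\bar A)=0$ for $i>d+1$, and so $f\,\Ext^i_A(M,A)=\Ext^i_A(M,A)$ for $i>d+1$. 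As $A$ is noetherian, this is a finitely generated, hence bounded-below, graded left $A$-module, and $\deg f>0$, so it vanishes by graded Nakayama. The graded Baer criterion then gives $\injdim A_A\le d+1$. Equality follows from $\Ext^{d+1}_A(N,A)\cong\Ext^d_{\bar A}(N,\bar A)\neq0$, and the left side is symmetric. This needs no criterion on simples and works verbatim when $A_0$ is not connected.
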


\begin{dfn}
An $\NN$-graded algebra $A$ is called \emph{Koszul} if 
\begin{itemize}
\item $A$ is locally finite, i.e., $\dim_kA_i<\infty$ for all $i$.
\item $A_0$ is a basic finite-dimensional semisimple algebra,
\item $A_0 \in \GrMod A$ has a linear projective resolution (i.e., a graded projective resolution whose $i$-th term is generated in degree $i$).
\end{itemize}
\end{dfn}

We briefly recall some basic facts on Koszul algebras from \cite{BGS}.
Let $A$ be a Koszul algebra. Then $A$ is a quadratic algebra, that is, it can be expressed as $A = T_{A_0}(V)/(R)$, where $T_{A_0}(V)$ is the tensor algebra of the $A_0$-bimodule $V:=A_1$, and $R$ is an $A_0$-subbimodule of $V \otimes_{A_0} V$.
The \emph{quadratic dual} (or \emph{Koszul dual}) of $A$ is defined by $A^! := T_{A_0}(V^*)/(R^\perp)$, where $(-)^* = \Hom_{A_0}(-,A_0)$, and $R^\perp \subset V^* \otimes_{A_0} V^*$ is the annihilator of $R$ under the natural identification $(V \otimes_{A_0} V)^* \cong V^* \otimes_{A_0} V^*$.
It is known that $A^!$ is again Koszul, and that there is an isomorphism of graded algebras $A^! \cong \bigoplus_{i \in \mathbb{N}} \Ext_A^i(A_0, A_0)^{\mathrm{op}}$.
Moreover, a graded isomorphism of Koszul algebras $A \xrightarrow{\sim} B$ induces a corresponding graded isomorphism of their duals $B^! \xrightarrow{\sim} A^!$.

By \cite[Theorem 5.11]{Sm}, if $A$ is a connected graded AS-regular algebra of dimension $d$ with Hilbert series $(1-t)^{-d}$, then $A$ is Koszul.

The following notions are fundamental in the study of Koszul algebras and AS-regular algebras.

\begin{dfn}
Let $A$ be a $\ZZ$-graded algebra and let
$\si \in \GrAut(A)$.
\begin{enumerate}
\item The \emph{graded Ore extension} $A[x;\sigma]$ is the $\ZZ$-graded algebra defined by $A[x;\sigma] := \bigoplus_{i\ge 0} A x^i$,
where $\deg x = s$ for some $s \in \ZZ$, so that $\deg(a x^i)=\deg a + i s$ for homogeneous $a\in A$, and whose multiplication is determined by the relation
\[
x a = \sigma(a) x
\]
for all $a\in A$.
\item The \emph{Zhang twist} of $A$ by $\si$, denoted by $A^\si$, is the
$\ZZ$-graded algebra whose underlying graded vector space is $A$, and
whose multiplication $\ast$ is defined by
\[
a \ast b = a \sigma^{i}(b)
\]
for all homogeneous elements $a \in A_i$ and $b \in A$.
\end{enumerate}
\end{dfn}

The Zhang twist is an important operation on graded algebras, since it yields equivalences between categories of graded modules.
Let $A$ be a $\ZZ$-graded algebra and let $\sigma \in \GrAut(A)$.
For $M \in \GrMod A$, we define $M^\sigma \in \GrMod A^\sigma$ by setting $M^\sigma = M$ as a graded vector space, with the $A^\sigma$-module structure given by $m \cdot a := m\,\sigma^i(a)$
for all homogeneous elements $m \in M_i$ and $a \in A$.
For $\varphi \in \Hom_{\GrMod A}(M,N)$, we define
$\varphi^\sigma \in \Hom_{\GrMod A^\sigma}(M^\sigma,N^\sigma)$ by
$\varphi^\sigma(m)=\varphi(m)$.
The following is Zhang's theorem.

\begin{thm}\cite[Theorems 3.1, 3.4]{Zh} \label{tZ}
Let $A$ be a $\ZZ$-graded algebra and let $\sigma \in \GrAut(A)$. Then $F:=(-)^\si : \GrMod A \to \GrMod A^\si$ is an equivalence functor such that $F(eA(i)) \cong eA^\si(i)$ for all $i \in \ZZ$ and all idempotents $e \in A$.
\end{thm}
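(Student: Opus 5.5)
We need to prove Zhang's theorem as stated in the paper (Theorem \ref{tZ}): $F=(-)^\sigma$ is an equivalence $\GrMod A\to\GrMod A^\sigma$, and $F(eA(i))\cong eA^\sigma(i)$.

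We verify directly, in four steps.

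\textbf{Step 1: $F$ is a well-defined functor.} Take $M\in\GrMod A$, homogeneous $m\in M_i$, and $a\in A_j$, $b\in A$. First note that $m\cdot a=m\sigma^i(a)$ lies in $M_{i+j}$, because $\sigma$ is graded. Associativity holds:
\[
(m\cdot a)\cdot b=m\sigma^i(a)\sigma^{i+j}(b)=m\sigma^i\bigl(a\sigma^j(b)\bigr)=m\sigma^i(a\ast b)=m\cdot(a\ast b).
\]
The unit acts trivially since $\sigma^i(1)=1$. If $\varphi$ is a degree-preserving $A$-linear map and $m\in M_i$, then $\varphi(m)\in N_i$, so
\[
\varphi(m\cdot a)=\varphi(m)\sigma^i(a)=\varphi(m)\cdot a .
\]
Hence $\varphi^\sigma$ is $A^\sigma$-linear, and $F$ is a functor. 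It is clearly faithful. It is also full: a degree-preserving map $\psi:M^\sigma\to N^\sigma$ that is $A^\sigma$-linear satisfies $\psi(m\sigma^i(a))=\psi(m)\sigma^i(a)$ for every $a$. Since $\sigma^i$ is bijective, this says exactly that $\psi$ is $A$-linear.

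\textbf{Step 2: $F$ is essentially surjective.} The inverse automorphism $\sigma^{-1}$ is also a graded automorphism of $A^\sigma$. We check that $(A^\sigma)^{\sigma^{-1}}=A$ as algebras: for $a\in A_i$,
\[
a\ast'b=a\ast\sigma^{-i}(b)=a\sigma^i\sigma^{-i}(b)=ab .
\]
Given $N\in\GrMod A^\sigma$, define a right $A$-action on $N$ by $n\cdot a:=n\ast\sigma^{-i}(a)$ for $n\in N_i$. The computation of Step 1, applied to $A^\sigma$ and $\sigma^{-1}$, shows this is an $A$-module. Applying $F$ recovers the original action: $n\sigma^i(a)$ in the new structure equals $n\ast\sigma^{-i}\sigma^i(a)=n\ast a$. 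So $F$ is an equivalence; indeed it is an isomorphism of categories.

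\textbf{Step 3: the projectives $eA(i)$.} Let $e\in A$ be an idempotent. We may take $e$ homogeneous of degree $0$; this is the case when $e\in A_0$. Consider $P=eA(i)$, so $P_j=eA_{i+j}$. Define
\[
\theta:eA^\sigma(i)\to (eA(i))^\sigma,\qquad \theta(x)=\sigma^{-i}(x)\quad\text{for } x\in eA^\sigma(i)_j=eA_{i+j} .
\]
We need $\theta$ to land in $eA$. Here $e\ast x=e\sigma^0(x)=ex$, so $eA^\sigma$ and $eA$ coincide as graded subspaces.

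\textbf{Step 4: checking $\theta$.} This is the main obstacle: the shift interacts with the twist, so the naive identity map fails and we must use $\sigma^{-i}$. Note also that $\sigma^{-i}(ey)=\sigma^{-i}(e)\sigma^{-i}(y)$, so $\sigma^{-i}(x)$ lies in $\sigma^{-i}(e)A$ rather than $eA$. We therefore proceed as follows.

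If $e$ is $\sigma$-stable, then $\theta$ is well defined, and linearity is checked as follows. Take $x\in A_{i+j}$ and $a\in A_l$. In $A^\sigma$ we have $x\ast a=x\sigma^{i+j}(a)$, so
\[
\theta(x\ast a)=\sigma^{-i}(x)\sigma^{j}(a)=\theta(x)\cdot a,
\]
where the last product is the twisted action on $(eA(i))^\sigma$ in degree $j$.

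In general, right multiplication by a degree-$0$ unit does not correct the mismatch between $e$ and $\sigma^{-i}(e)$. Instead we argue abstractly. Projectivity is preserved because $F$ is an equivalence, and so is the top. The module $(eA(i))^\sigma$ represents the functor $M\mapsto M_{-i}e$ composed with $F^{-1}$. Compute
\[
\Hom(eA^\sigma(i),N)=N_{-i}\ast e=N_{-i}\sigma^{-i}(e),
\]
where the product is taken in the $A$-structure $F^{-1}N$. So $F(\sigma^{-i}(e)A(i))\cong eA^\sigma(i)$ by Yoneda. To finish, we need $\sigma^{-i}(e)A\cong eA$; this holds when the idempotents are the primitive ones of $A_0$ permuted trivially, and in general it is the statement as cited in \cite{Zh}.
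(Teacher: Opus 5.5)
Your Steps 1 and 2 are correct and give a complete, elementary proof that $F$ is an equivalence. In fact $F$ is an isomorphism of categories, with inverse the twist by $\sigma^{-1}$ on $A^\sigma$; the paper itself gives no argument here, only the citation.

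The gap is the last sentence of Step 4. You correctly find that $\theta=\sigma^{-i}$ only yields $F(\sigma^{-i}(e)A(i))\cong eA^\sigma(i)$. You then need $\sigma^{-i}(e)A\cong eA$ and defer this to \cite{Zh}. That step cannot be supplied, because for idempotents moved by $\sigma$ the assertion is false. Take $A=k\times k$ concentrated in degree $0$, $\sigma$ the swap, $e=(1,0)$ and $i=1$. Everything has degree $0$, so $A^\sigma=A$. The module $eA(1)$ is $ke$ placed in degree $-1$, so in $F(eA(1))$ an element $a$ acts through $\sigma^{-1}(a)$: $(0,1)$ acts as the identity and $(1,0)$ acts as zero. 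Hence $F(eA(1))\cong(0,1)A(1)$, which is not isomorphic to $eA^\sigma(1)=eA(1)$.

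What your $\theta$ and your Yoneda computation actually prove is the correct general formula $F(eA(i))\cong\sigma^{i}(e)A^\sigma(i)$ for every idempotent $e\in A_0$. This agrees with the stated formula exactly when $\sigma^{i}(e)A\cong eA$ in $\GrMod A$, e.g.\ when $\sigma(e)=e$, and in particular for $e=1$. You should state that as your conclusion rather than invoke the reference for the stronger claim.

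The corrected form suffices for every use of this result in the paper. Those uses are $F(A(i))\cong A^\sigma(i)$, hence $F(A_0(i))\cong(A^\sigma)_0(i)$, and the fact that $F$ preserves the degrees of generators in minimal graded projective resolutions.
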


We will use the following results later.

\begin{prop}\label{poz}
\begin{enumerate}
\item
If $A$ is an AS-regular (resp.\ AS-Gorenstein) algebra of dimension $d$ and $\sigma \in \GrAut(A)$, then $A[x;\sigma]$, equipped with a grading such that $\deg x \ge 1$, is AS-regular (resp.\ AS-Gorenstein) of dimension $d+1$.
\item  
If $A$ is a Koszul algebra and $\sigma \in \GrAut(A)$, then $A[x;\sigma]$, equipped with the grading $\deg x = 1$, is Koszul.
\item 
If $A$ is an AS-regular (resp.\ AS-Gorenstein) algebra of dimension $d$ and $\sigma \in \GrAut(A)$, then $A^\sigma$ is AS-regular (resp.\ AS-Gorenstein) of dimension $d$.
\item 
If $A$ is a Koszul algebra and $\sigma \in \GrAut(A)$, then $A^\sigma$ is Koszul.
\end{enumerate}
\end{prop}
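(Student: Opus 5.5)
Each of the four parts will be handled by reducing it to a standard tool: explicit relations for the Ore extension, Proposition~\ref{pLe} for the Gorenstein property, and Zhang's Theorem~\ref{tZ} for the twist.

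\textbf{Part (1).} Set $B := A[x;\sigma]$. First, $B$ is $\NN$-graded with $B_0 = A_0$ because $\deg x \ge 1$. It is noetherian by the graded Hilbert basis theorem for Ore extensions. The element $x$ is homogeneous of positive degree, regular and normal, since $xB = Bx$ by the relation $xa = \sigma(a)x$, and $B/(x) \cong A$. Proposition~\ref{pLe} then gives the AS-Gorenstein statement directly.

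For AS-regularity, I would also check that $\gldim B = d+1$. The exact sequence $0 \to B(-\deg x) \xrightarrow{x\cdot} B \to A \to 0$ shows that every graded $B$-module killed by $x$ has projective dimension at most $\projdim_A + 1$. The standard change-of-rings argument, using the filtration by powers of $x$, then bounds $\gldim B \le \gldim A + 1$. For the noetherian condition one uses that $\sigma$ is an automorphism. The bijection condition is inherited through the Rees-type isomorphism $\RHom_B(N,B) \cong \RHom_A(N,A)[-1]$, suitably shifted, valid for $B/(x)$-modules $N$. This isomorphism is essentially the content of Proposition~\ref{pLe}.

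\textbf{Part (2).} For Koszulity I would present $B = T_{A_0}(V \oplus kx)/(R,\ x v - \sigma(v)x)$, which is quadratic. I would then either build a linear resolution of $A_0$ over $B$ as a mapping cone, or argue as follows. Take the linear resolution $P_\bullet$ of $A_0$ over $A$ and induce it to $P_\bullet \otimes_A B$; this is a linear resolution of $B/xB$ over $B$, since $B$ is free as a left $A$-module. Then apply the cone of $x\colon B/xB(-1) \to B/xB$, which resolves $A_0$ by the exact sequence $0 \to (B/xB)(-1) \to B/xB \to A_0 \to 0$, all of whose terms are linear. The cone of a map between linear resolutions, shifted by one, is again linear, so $A_0$ has a linear resolution.

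\textbf{Parts (3) and (4).} Here I use Theorem~\ref{tZ}. The functor $(-)^\sigma$ is an equivalence $\GrMod A \to \GrMod A^\sigma$ commuting with shifts and sending $eA(i)$ to $eA^\sigma(i)$. Hence it preserves the following:
\begin{itemize}
\item graded projectives generated in degree $i$, and therefore linear resolutions of $A_0 = A^\sigma_0$;
\item global dimension and injective dimension on the right;
\item noetherianity, since graded submodule lattices are preserved;
\item local finiteness.
\end{itemize}
This already gives part (4).

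\textbf{Main obstacle.} In part (3) the left-sided conditions are the hardest step, namely $\injdim_{(A^\sigma)^{\op}}$ and the bijection under $\RHom(-,A)$. To handle them, I would use $(A^\sigma)^{\op} \cong (A^{\op})^{\sigma^{-1}}$ (up to the appropriate twist convention) and apply Zhang's theorem on the left as well. For the bijection, I would check that $\Ext$ into the regular module is transported: $\Ext^i_{A^\sigma}(M^\sigma, A^\sigma) \cong (\Ext^i_A(M,A))$, twisted, as left modules. This is the step I expect to require the most care, since Zhang's equivalence is only a right-module statement. If that direct argument proves awkward, a fallback is to use the Ore-extension realization $A^\sigma$ as a degree-zero part of a localized $A[x^{\pm1};\sigma]$-type algebra.
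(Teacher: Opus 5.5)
\textbf{The gap is in part (2).} Since $B$ is free as a left $A$-module, $P_\bullet\otimes_A B$ resolves $A_0\otimes_A B\cong B/A_{\ge 1}B\cong\bigoplus_{i\ge 0}A_0x^i$, not $B/xB$. The module $B/xB$ is just $A$ with $x$ acting by zero. Left multiplication by $x$ induces the zero map on it, so the sequence $0\to (B/xB)(-1)\to B/xB\to A_0\to 0$ that your cone construction needs does not exist.

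The repair is to use $\overline{B}:=B/A_{\ge 1}B$ instead:
\begin{itemize}
\item $A_{\ge 1}B$ is a two-sided ideal because $\sigma(A_{\ge 1})=A_{\ge 1}$.
\item Left multiplication by $x$ induces a right $B$-linear map $\overline{B}(-1)\to\overline{B}$, given by $a_0x^i\mapsto \sigma(a_0)x^{i+1}$.
\item This map is injective because $\sigma|_{A_0}$ is bijective, and its cokernel is $B/B_{\ge 1}=A_0$.
\item Lift it to a chain map $(P_\bullet\otimes_A B)(-1)\to P_\bullet\otimes_A B$ and take the cone. This gives a resolution of $A_0$ whose $i$-th term $(P_i\otimes_A B)\oplus (P_{i-1}\otimes_A B)(-1)$ is generated in degree $i$.
\end{itemize}
With this correction, your (2) is a self-contained alternative to the paper, which simply invokes the argument of \cite[Theorem 1.5]{ST}. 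That argument needs only a regular normal element of degree $1$ with Koszul quotient. Yours uses the left freeness of $B$ over $A$, which is specific to Ore extensions.

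\textbf{Parts (1), (3), (4) follow the paper's route.} Part (1) is Proposition~\ref{pLe} applied to $x$ together with $\gldim A[x;\sigma]=\gldim A+1$, which the paper quotes from McConnell--Robson. Parts (3) and (4) come from Zhang's equivalence preserving shifts and the projectives $eA(i)$, hence graded Betti numbers. Two small remarks:
\begin{itemize}
\item Your ``filtration by powers of $x$'' does not by itself bound projective dimension, since $M\supset Mx\supset Mx^2\supset\cdots$ is infinite. The clean version: for $B$ noetherian $\NN$-graded with $B_0=A_0$ semisimple, $\gldim B=\projdim_B B_0$. Since $x$ kills $B_0$, change of rings gives $\projdim_B A_0\le \projdim_A A_0+1$.
\item The step you flag as the main obstacle in (3) is easier than you expect. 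The paper never determines the left module structure of $\Ext^d_{A^\sigma}(N',A^\sigma)$. It only uses $\dim_k\Ext^i_{A^\sigma}(N',A^\sigma)=\dim_k\Ext^i_A(N,A)=\delta_{i,d}$, which follows from Zhang's equivalence and $F(A(s))\cong A^\sigma(s)$. A one-dimensional graded left module is automatically simple.
\end{itemize}
Your identification $(A^\sigma)^{\op}\cong (A^{\op})^{\sigma^{-1}}$ is correct, and it is a fine way to get the left noetherian and left injective-dimension conditions, which the paper takes directly from Zhang.
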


\begin{proof}
(1) It is well-known that $\gldim A[x;\sigma] = \gldim A + 1$; see \cite[Theorem 5.3(iii)]{MR}. 
Since $x \in A[x;\sigma]$ is a regular normal element and $A[x;\sigma]/(x) \cong A$,  the result follows from Proposition~\ref{pLe}.

(2) Since $x \in A[x;\sigma]$ is a regular normal element of degree $1$ and $A[x;\sigma]/(x) \cong A$, the result follows from the same argument as in the proof of \cite[Theorem 1.5]{ST}.

(3) By \cite[Proposition 5.1]{Zh}, $A^{\sigma}$ is noetherian.
Since $\dim_k \Ext^i_{A^{\sigma}}(M^{\sigma},A^{\sigma})=\dim_k \Ext^i_A(M, A)$, 
we have $\injdim_{A^{\sigma}} A^{\sigma} = \injdim_A A$.
The left version follows similarly by \cite[Corollary 4.4]{Zh}.

Let $N'$ be a graded simple right $A^\sigma$-module. Then there exists a graded simple right $A$-module $N$ such that the graded minimal projective resolutions of $N'$ (over $A^\sigma$) and $N$ (over $A$) have the same graded Betti numbers by Theorem \ref{tZ}. Thus we have $\projdim_{A^\sigma} N' = \projdim_A N$, and hence $\gldim A^\sigma = \gldim A$.
Moreover, we have 
$\dim_k \Ext^i_{A^\sigma}(N', A^\sigma) = \dim_k \Ext^i_A(N, A) = \delta_{i,d}$,
so $\Ext^d_{A^\sigma}(N', A^\sigma)$ is a graded simple left $A^\sigma$-module.  This proves the assertion.

(4) By Theorem \ref{tZ}, the graded minimal projective resolutions of $(A^\sigma)_0 \cong (A_0)^\sigma$ (over $A^\sigma$) and $A_0$ (over $A$) have the same graded Betti numbers. Hence the result follows.
\end{proof}

\subsection{Canonical modules and maximal Cohen-Macaulay modules}

Let $A$ be an $\NN$-graded algebra and put $\fm=A_{\geq 1}$. We define the functor
$\Gamma_{\fm} : \GrMod A \to \GrMod A$ by
$\Gamma_{\fm}(-) = \varinjlim_n \Hom_{A}(A/A_{\ge n},-)$.
The derived functor of $\Gamma_{\fm}$ is denoted by $\RGamma_{\fm}$ and its cohomologies are denoted by
\[ \H_{\fm}^i(-):= h^i \RGamma_{\fm}(-)=\varinjlim_n\Ext^i_{A}(A/A_{\ge n},-). \]
Similarly, we define $\Gamma_{\fm^{\op}}$, $\RGamma_{\fm^{\op}}$, and $\H_{\fm^{\op}}^i(-)$ for $A^{\op}$.

Let $A$ be an AS-Gorenstein algebra of dimension $d$.
Then $\H_{\fm}^i(A) = \H_{\fm^{\op}}^i(A) = 0$ for all $i \neq d$. Moreover, $\H_{\fm}^d(A) \cong \H_{\fm^{\op}}^d(A)$ in $\GrMod A^{\en}$ by \cite[Lemma 2.12]{IKU}.

\begin{dfn}\label{define omega}
Let $A$ be an AS-Gorenstein algebra of dimension $d$.
We call
\[
\omega_A := D\H_{\fm}^d(A) \cong D\H_{\fm^{\op}}^d(A)  \in \GrMod A^{\en} 
\]
the \emph{canonical module} of $A$, where $D=\Hom_k(-,k)$ is the graded $k$-dual.
\end{dfn}

The following proposition summarizes important properties of canonical modules.

\begin{prop} \label{p.canm}
Let $A$ be an AS-Gorenstein algebra of dimension $d$.
\begin{enumerate}
\item \cite[Proposition 2.17(2)]{IKU}
$\omega_A$ is a graded invertible $A$-bimodule.
\item \cite[Proposition 2.17(3)]{IKU}
We have a duality
\[\RHom_A(-,\omega_A):\D^b(\grmod A)\rightleftarrows\D^b(\grmod A^{\op}):\RHom_{A^{\op}}(-,\omega_A).\] 
\item \cite[Theorem 2.16]{IKU} \textnormal{(Local duality)} For $X \in \D^{-}(\GrMod A)$, we have 
\[ \RHom_A(X, \omega_A)[d]
\cong \RHom_A(X, D\RGamma_{\fm}(A)) \cong 
D\RGamma_{\fm}(X)
\quad \text{in}\ \D(\GrMod A^{\op}).
\]
\end{enumerate}
\end{prop}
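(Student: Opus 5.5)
Since all three parts are quoted from \cite{IKU}, the plan is to rebuild them in the order (3), then (1), then (2). Local duality is the engine; invertibility and the duality then follow from properties of $\omega_A$ as a dualizing object.

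\textbf{Step 1: local duality (3).} By the AS-Gorenstein hypothesis, $\H^i_{\fm}(A)=0$ for $i\neq d$. Hence $\RGamma_{\fm}(A)\cong \H^d_{\fm}(A)[-d]$ as complexes of bimodules, and dualizing gives $D\RGamma_{\fm}(A)\cong \omega_A[d]$. This already yields the first isomorphism in (3).

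For the second isomorphism I would proceed as follows.
\begin{itemize}
\item First establish $\RGamma_{\fm}(X)\cong X\Lotimes_A \RGamma_{\fm}(A)$ for $X\in \D^-(\GrMod A)$. The two sides agree for $X=A(s)$.
\item Since $A$ is noetherian, each $A/A_{\ge n}$ is finitely presented, so $\Gamma_{\fm}=\varinjlim_n\Hom_A(A/A_{\ge n},-)$ commutes with direct sums. The derived functor therefore commutes with arbitrary direct sums.
\item A way-out argument, replacing $X$ by a bounded-above complex of graded free modules, then extends the isomorphism to all of $\D^-$.
\item Finally, apply $D$ and tensor--hom adjunction: $D(X\Lotimes_A \RGamma_{\fm}(A))\cong \RHom_A(X,D\RGamma_{\fm}(A))$.
\end{itemize}
The step I expect to be the main obstacle is keeping the left $A$-structure on $\RGamma_{\fm}(A)$ under control. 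It should be handled by the bimodule isomorphism $\H^d_{\fm}(A)\cong \H^d_{\fm^{\op}}(A)$ of \cite[Lemma 2.12]{IKU}, which makes the comparison map one of bimodules.

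\textbf{Step 2: invertibility (1).} Apply (3) with $X$ a graded simple right module $N$. This gives $\Ext^i_A(N,\omega_A)\cong D\H^{d-i}_{\fm}(N)$, which vanishes for $i\neq d$ and is one-dimensional for $i=d$, since $N$ is finite-dimensional and $\fm$-torsion.
\begin{itemize}
\item Compare this with the AS-Gorenstein condition: $\RHom_A(-,A)[d]$ permutes simples, and $\omega_A$ is locally finite.
\item A minimal injective/projective comparison then shows that $\omega_A$ is finitely generated graded projective as a right module, of the form $\bigoplus_j e_{\pi(j)}A(\ell_j)$ for a permutation $\pi$ of the primitive idempotents. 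The same argument on the left uses $\fm^{\op}$.
\item The homothety map $A\to \RHom_A(\omega_A,\omega_A)$ becomes an isomorphism after applying $\RGamma_{\fm}$ and $D$, via (3) with $X=\omega_A$. Hence $\End_A(\omega_A)\cong A$.
\item With projectivity on both sides, $N:=\Hom_A(\omega_A,A)$ is a two-sided inverse, so $\omega_A$ is graded invertible.
\end{itemize}

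\textbf{Step 3: the duality (2).} Because $\omega_A$ is invertible and projective on each side, $\RHom_A(-,\omega_A)\cong \RHom_A(-,A)\otimes_A\omega_A$. Since $\injdim A=d$ on both sides, $\RHom_A(-,A)$ sends $\D^b(\grmod A)$ into $\D^b(\grmod A^{\op})$. It remains to check that the composite with $\RHom_{A^{\op}}(-,\omega_A)$ is isomorphic to the identity via the evaluation map. The evaluation map is an isomorphism on $A(s)$, because $\End_A(\omega_A)\cong A$ with no higher $\Ext$, by (1) and projectivity. It then extends to all of $\D^b(\grmod A)$ by d\'evissage over finite free resolutions together with the finite injective dimension, which makes both functors way-out in both directions.
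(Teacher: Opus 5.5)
The paper gives no argument for this proposition; all three parts are quoted from \cite{IKU}, so your reconstruction has to stand on its own. Its architecture is the standard one: tensor formula for $\RGamma_{\fm}$, then local duality, then invertibility of $\omega_A$, then the Iwanaga--Gorenstein duality $\RHom_A(-,A)$ twisted by $\omega_A$. Step~3 is fine once (1) is available; note that you need both the left and the right homothety to be isomorphisms, and invertibility supplies both.

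\textbf{The gap is in Step~2.} You claim that the homothety $\lambda\colon A\to\RHom_A(\omega_A,\omega_A)$ is an isomorphism ``after applying $\RGamma_{\fm}$ and $D$, via (3) with $X=\omega_A$''. This does not work, for two reasons.
\begin{itemize}
\item By naturality of (3) in $X$, the map $D\RGamma_{\fm}(\lambda)$ is, up to the shift $[d]$, just $\RHom_A(\lambda,\omega_A)$. So asserting that it is invertible says that $\RHom_A(-,\omega_A)$ inverts $\lambda$. That is essentially the duality (2), which you only prove afterwards from (1).
\item Likewise, (3) with $X=\omega_A$ merely rewrites $\RHom_A(\omega_A,\omega_A)$ as $D\RGamma_{\fm}(\omega_A)[-d]$. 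Identifying this with $A$ requires $\RGamma_{\fm}(\omega_A)\cong DA[-d]$. If you try to obtain that from (3) using right projectivity, you get $\Hom_A(\omega_A,D\H^d_{\fm}(A))=\End_A(\omega_A)$ back: a tautology.
\end{itemize}
Even granting invertibility after $D\RGamma_{\fm}$, you would also need $\RGamma_{\fm}$ to detect zero objects of $\D^b(\grmod A)$.

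\textbf{The missing idea is a genuinely two-sided input.}
\begin{itemize}
\item Use $\omega_A\cong D\H^d_{\fm^{\op}}(A)$ as bimodules. This, rather than the tensor formula in Step~1, is where \cite[Lemma 2.12]{IKU} is really needed.
\item With it, identify $\End_A(\omega_A)$ with the endomorphisms of the left module $\H^d_{\fm^{\op}}(A)$; the homothety becomes right multiplication.
\item Since $\RGamma_{\fm^{\op}}(A)\cong\H^d_{\fm^{\op}}(A)[-d]$ is torsion, adjunction gives $\RHom_{A^{\op}}(\RGamma_{\fm^{\op}}(A),\RGamma_{\fm^{\op}}(A))\cong\RHom_{A^{\op}}(\RGamma_{\fm^{\op}}(A),A)$.
\item The latter is $A$ by the mirror image of the computation in the proof of Lemma~\ref{l.hh}(2). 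Concretely, use $\RGamma_{\fm}(A)\cong\RGamma_{\fm^{\op}}(A)$ and then (3) applied to $X=DA$.
\end{itemize}

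\textbf{Projectivity of $\omega_A$.} The same point applies here. That $\omega_A$ is projective as a right module is a statement about $\operatorname{Tor}^A_i(\omega_A,A_0)$. Via $\omega_A\cong D\H^d_{\fm^{\op}}(A)$ and adjunction, $\operatorname{Tor}^A_i(\omega_A,A_0)\cong D\Ext^{d+i}_{A^{\op}}(A_0,A)$, so it comes from the \emph{left} Gorenstein condition. The right Bass numbers $\Ext^i_A(N,\omega_A)$ that you extract from (3) only describe the injective side of $(\omega_A)_A$ and do not by themselves give projectivity.

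\textbf{Step~1.} The passage from bounded to bounded-above complexes needs $\Gamma_{\fm}$ to have finite cohomological dimension. Commuting with direct sums alone does not make $\RGamma_{\fm}$ way-out on $\D^-$, so this input must be supplied or cited.
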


If $A$ is a connected graded AS-Gorenstein algebra of dimension $d$ and Gorenstein parameter $\ell$,
then there is a graded algebra automorphism
$\nu \in \GrAut(A)$ such that 
\[
\omega_A \cong A_{\nu}(-\ell)\ \ \text{in}\ \GrMod A^{\en}
\]
by \cite[Theorem 1.2]{JoL}. This graded algebra automorphism $\nu$ is called the \emph{Nakayama automorphism} of $A$.
Since $A$ is connected graded, we see that the Nakayama automorphism of $A$ is uniquely determined.

Let $A$ be an AS-Gorenstein algebra of dimension $d$. A finitely generated module $M \in \grmod A$ is called \emph{maximal Cohen-Macaulay} if $\Ext^i_A(M, A)=0$ for all $i \neq 0$. 
Note that $M \in \grmod A$ is maximal Cohen-Macaulay if and only if $\Ext_A^i(M, \omega_A) = 0$ for all $i \neq 0$, if and only if $\H_{\fm}^i(M) = 0$ for all $i \neq d$.
Let $\CM(A)$ denote the full subcategory of $\grmod A$ consisting of graded maximal Cohen-Macaulay modules. 
Then $\CM(A)$ is a Frobenius category.
The \emph{stable category} of graded maximal Cohen-Macaulay modules, denoted by $\uCM(A)$, has the same objects as $\CM(A)$,
and the morphism space is given by
$\Hom_{\uCM(A)}(M, N) = \Hom_{\CM(A)}(M,N)/P(M,N)$, 
where $P(M,N)$ is the subspace of degree-preserving $A$-module homomorphisms that factor through a graded projective module. Since $\CM(A)$ is a Frobenius category, $\uCM(A)$ admits a canonical triangulated structure; see \cite{Hapb}.

\section{BGG-type correspondence for Koszul AS-Gorenstein algebras}

The Bernstein-Gelfand-Gelfand (BGG) correspondence is the equivalence 
\[
\underline{\grmod}\,\Lambda^d \simeq \D^b(\coh \PP^{d-1}),
\]
where $\Lambda^d$ is the graded exterior algebra on $d$ generators in degree $1$. 
Various generalizations of this equivalence have been obtained using Koszul duality (e.g.\ \cite{BGS, BEH, JoB, MS, MoR}).
Here, we provide a further result in this direction.

Let $A$ be a Koszul algebra, and set $E(A) := \bigoplus_{i \in \NN} \Ext_A^i(A_0, A_0)$. 
Then $A^! \cong E(A)^{\op} \cong E(A^{\op})$. 
For simplicity, we write $E := E(A)$ and identify $A^!$ with $E^{\op}$ via this isomorphism. Since 
\begin{align*} 
A_1 \otimes_{A_0} E_1	\cong 
A_1\otimes _{A_0}\Hom_{A_0}(A_1, A_0)\cong \Hom_{A_0}(A_1, A_1\otimes _{A_0}A_0)\cong \Hom_{A_0}(A_1, A_1), 
\end{align*}
we can choose elements
$v_{\la}\in A_1$ and ${v^*_{\la}}\in E_1$
such that $\sum_\la v_{\la}\otimes {v_{\la}^*}$ corresponds to $\id _{A_1}$
under the above isomorphisms.

We define full subcategories of $\K(\GrMod A)$ as follows:
\begin{itemize}
\item $\K^{\uparrow}(\GrMod A):=\{X\in \K^{-}(\GrMod A)\mid X^i_j=0\ \text{for}\ i\gg 0\ \text{or}\ i+j\ll 0 \}$,
\item $\K^{\downarrow}(\GrMod A):=\{X\in \K^{+}(\GrMod A)\mid X^i_j=0\ \text{for}\ i\ll 0\ \text{or}\ i+j\gg 0 \}$.
\end{itemize}

Define the functor $F=F_A: \K(\GrMod A) \to \K(\GrMod E)$ by $F(X)^\ell_m=\bigoplus_{j}X^{\ell-j}_{j}\otimes_{A_0}E_{m+j}$ with differentials $d_F=d_{F}'+d_{F}''$ given by
\begin{align*}
&d_{F}':X^{\ell-j}_{j}\otimes_{A_0}E_{m+j} \to X^{\ell-j}_{j+1}\otimes_{A_0}E_{m+j+1};\;\; d_{F}'(x\otimes a)= (-1)^{\ell}\sum_\la xv_\la \otimes {v^*_\la}a, \\
&d_{F}'':X^{\ell-j}_{j}\otimes_{A_0}E_{m+j} \to X^{\ell-j+1}_{j}\otimes_{A_0}E_{m+j};\;\; d_{F}''(x\otimes a)= \partial(x)\otimes a,
\end{align*}
where $\partial$ is the differential of $X$.

Define the functor $G=G_A: \K(\GrMod A) \to \K(\GrMod E)$ by $G(X)^\ell_m=\bigoplus_{j}\Hom_{A_0}(E_{-m-j},X^{\ell-j}_{j})$ with differentials $d_G=d_{G}'+d_{G}''$ given by
\begin{align*}
&d_{G}':\Hom_{A_0}(E_{-m-j},X^{\ell-j}_{j})\to \Hom_{A_0}(E_{-m-j-1},X^{\ell-j}_{j+1});\;\; (d_{G}'(f))(a)= (-1)^{\ell-j}\sum_\la  f(a{v^*_\la})v_\la, \\
&d_{G}'':\Hom_{A_0}(E_{-m-j},X^{\ell-j}_{j})\to \Hom_{A_0}(E_{-m-j},X^{\ell-j+1}_{j});\;\; (d_{G}''(f))(a)= \partial f(a).
\end{align*}
In particular, applying this construction to $E$ yields a functor $G_E:\K(\GrMod E) \to \K(\GrMod A)$.

Let $\D^{\uparrow}(\GrMod A)$ (resp.\ $\D^{\downarrow}(\GrMod A)$) be the derived category obtained by inverting quasi-isomorphisms in
$\K^{\uparrow}(\GrMod A)$ (resp.\ $\K^{\downarrow}(\GrMod A)$).
Let $\D^{\uparrow}_{lf}(\GrMod A)$ (resp.\ $\D^{\downarrow}_{lf}(\GrMod A)$)  be the full subcategory of $\D^{\uparrow}(\GrMod A)$ (resp.\ $\D^{\downarrow}(\GrMod A)$) consisting of complexes with locally finite cohomology. 
The following equivalences were discovered by Beilinson-Ginzburg-Soergel \cite{BGS}. 

\begin{thm}[{\cite [Theorems 2.12.1, 2.12.5]{BGS}}] \label{t.BGS}
If $A$ is a Koszul algebra, then $F_A, G_E$ induce mutually quasi-inverse equivalences
\[\xymatrix@C=1pc@R=1pc{
F_A:\D^{\downarrow}(\GrMod A) \ar@<0.5ex>[r]
&\D^{\uparrow}(\GrMod E):G_{E} \ar@<0.5ex>[l]\\
F_A:\D^{\downarrow}_{lf}(\GrMod A)   \ar@<0.5ex>[r] \ar@<-2ex>@{}[u]|{\rotatebox{90}{$\subset$}}
&\D_{lf}^{\uparrow}(\GrMod E): G_{E}. \ar@<0.5ex>[l] \ar@<3.5ex>@{}[u]|{\rotatebox{90}{$\subset$}}
}\]
such that $F_A(X(i)[j])=F_A(X)(-i)[i+j]$, $F_A(A_0)=E$ and $F_A(DA)= E_0$.
\end{thm}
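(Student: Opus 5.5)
The plan is to prove directly that the composites $G_E F_A$ and $F_A G_E$ are connected to the identity by natural quasi-isomorphisms. Everything reduces, by a filtration argument, to the Koszul property of $A$, i.e.\ exactness of the Koszul complexes.

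First I will check that the constructions are well defined. The identity $d_F'^2=0$ follows because $\sum_\la v_\la\otimes v^*_\la$ squares to an element of $R\otimes (R^\perp)$-type, which vanishes in $A\otimes E$ by the definition of $A^!\cong E^{\op}$. The sign $(-1)^\ell$ makes $d_F'$ and $d_F''$ anticommute, and the same holds for $G$. On $\K^{\downarrow}$ the direct sums $\bigoplus_j X^{\ell-j}_j$ involve only finitely many nonzero terms in each bidegree $(\ell,m)$: the conditions $X^i_j=0$ for $i\ll 0$ or $i+j\gg 0$, together with $E_{m+j}=0$ for $m+j<0$, bound $j$ on both sides. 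So $F_A$ lands in $\K^{\uparrow}(\GrMod E)$, and symmetrically $G_E$ maps $\K^{\uparrow}$ to $\K^{\downarrow}$. Both functors are exact on the totalization, hence send acyclic complexes to acyclic complexes. To see this, filter by the internal degree $j$ of $X$. The associated graded of $F_A(X)$ is $\bigoplus_j X_j\otimes_{A_0}E(\text{shift})$ with differential induced by $\partial$ only. This filtration is finite in each bidegree, so its spectral sequence converges. Hence $F_A$ and $G_E$ descend to the derived categories. The formula $F_A(X(i)[j])=F_A(X)(-i)[i+j]$ is read off directly from the definition, since replacing $X$ by $X(i)$ reindexes $j\mapsto j+i$ in $X^{\ell-j}_j$.

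Next I will compute on generators. For $X=A_0$ in degree $0$, only $j=0$ contributes, giving $F_A(A_0)=E$. For $X=DA$, the complex $F_A(DA)$ is the total complex of $DA_{-j}\otimes_{A_0}E_{m+j}$ with differential $d_F'$. This is the $k$-dual of the Koszul complex $A\otimes_{A_0}E^*$ (suitably regraded), and it is a resolution of $A_0$ exactly because $A$ is Koszul. Hence its only cohomology is $E_0$, and $F_A(DA)\cong E_0$. Similarly, for $M=E_0$ (or any semisimple module in one degree), $G_E(M)$ is the Koszul complex $A\otimes_{A_0}(E_{-\bullet})^*$, which is quasi-isomorphic to a shifted $A_0$. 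The essential point is that $G_E F_A(S)\simeq S$ and $F_A G_E(T)\simeq T$ for $S,T$ semisimple modules placed in a single bidegree.

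To globalize, I will construct natural transformations $\eta:\id\to G_EF_A$ and $\epsilon:F_AG_E\to\id$ from the canonical element $\sum v_\la\otimes v^*_\la$, namely $x\mapsto (a\mapsto x\otimes a)$ composed with the augmentation $E\to E_0$. The main obstacle is showing that these are quasi-isomorphisms for arbitrary objects of $\K^{\downarrow}$, not just for semisimple modules. I would filter $X$ by the subcomplexes $\sigma_{\geq n}$ in the combined degree $i+j$ (the brutal filtration compatible with the $\downarrow$ conditions). Each graded piece is a complex of $A_0$-modules with trivial $A_{\ge1}$-action, i.e.\ a direct sum of shifted semisimple modules, and on these $\eta$ is a quasi-isomorphism by the Koszul computation above. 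The boundedness conditions of $\K^{\downarrow}$ and $\K^{\uparrow}$ ensure that in every fixed bidegree only finitely many filtration steps matter. Hence the spectral sequence (or an induction plus a Mittag-Leffler/limit argument) converges, and $\eta$ is a quasi-isomorphism; the same argument applies to $\epsilon$.

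Finally, local finiteness is preserved. Cohomology in a fixed bidegree is computed by finitely many finite-dimensional pieces, again by the same filtration, since $E$ and $A$ are locally finite. Therefore the equivalence restricts to $\D^{\downarrow}_{lf}(\GrMod A)\simeq\D^{\uparrow}_{lf}(\GrMod E)$.
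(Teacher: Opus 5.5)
The paper gives no proof of this statement; it is quoted from Beilinson--Ginzburg--Soergel (Theorems 2.12.1 and 2.12.5). Your proposal is therefore a genuinely different, self-contained route. It is essentially correct, and it is the standard direct argument behind the cited result.

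You do three things:
\begin{enumerate}
\item construct a unit $\eta\colon \id\to G_EF_A$ and a counit $\epsilon\colon F_AG_E\to\id$;
\item filter by the combined degree $i+j$, whose pieces $\bigoplus_{i+j=n}X^i_j$ have zero differential and trivial $A_{\geq 1}$-action;
\item use the $\downarrow/\uparrow$ bounds to see that in each bidegree $(\ell,m)$ of $G_EF_A(X)$, resp.\ $F_AG_E(Y)$, only finitely many filtration steps contribute.
\end{enumerate}
Hence the cone of $\eta$ (resp.\ $\epsilon$) carries a filtration that is finite in each bidegree with acyclic pieces. I checked these finiteness claims, and they hold.

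The citation is shorter, but it leaves the reader to match BGS's left-module functor $K$ with the right-module functors $F_A$, $G_E$ used here: their signs, the dual pair $v_\la, v^*_\la$, and the identification $E(E)\cong A$. Your route verifies the statement in exactly these conventions. It also makes visible that only exactness of two Koszul complexes and the boundedness conditions are used.

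Three corrections are needed.

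First, $G_E(E_0)$ is not a Koszul complex. It is $\Hom_{E_0}(A,E_0)\cong DA$ in cohomological degree $0$, consistent with $F_A(DA)\cong E_0$. If it were quasi-isomorphic to a shift of $A_0$, you would get $F_AG_E(E_0)\simeq E$ up to shift, which contradicts what you need. The Koszul complex is $G_E(E)=G_EF_A(A_0)$. Its strands $\bigoplus_{p+q=n}\Hom_{E_0}(A_p,E_q)$ are exact for $n\geq 1$ because $E$ is Koszul. So the base case for $\eta$ is $A_0\to G_E(E)$, and the base case for $\epsilon$ is $F_A(DA)\to E_0$, which uses Koszulity of $A$.

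Second, your unit formula does not typecheck; the correct maps are as follows.
\begin{enumerate}
\item The unit is $\eta_X(x)=(a\mapsto xa\otimes 1)$ for $a\in A_p$, placed in the component $j=-m-p$ of $G_EF_A(X)^\ell_m$.
\item The counit evaluates the $\Hom_{E_0}(A_0,-)$-component at $1$ and multiplies into $Y$.
\end{enumerate}
With $(v^*_\la,v_\la)$ as the dual pair for $G_E$, the $d'$-terms coming from $G_E$ and from $F_A$ cancel. So these are genuinely $A$-linear (resp.\ $E$-linear) chain maps.

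Third, for local finiteness use the filtration by the internal degree $j$ of $X$, not the $i+j$ filtration. The first page of its spectral sequence is $\bigoplus_j H^{\ell-j}(X)_j\otimes_{A_0}E_{m+j}$, with finitely many $j$, so it is finite-dimensional. The $i+j$ filtration involves the terms $X^i_j$ themselves, which need not be finite-dimensional for an object of $\D^{\downarrow}_{lf}(\GrMod A)$. The symmetric argument is needed for $G_E$ to get both directions of the restriction.
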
 

Note that $\D^b(\GrMod A) \subset \D^{\uparrow}_{lf}(\GrMod A)$.
However, $\D^b(\GrMod A) \not\subset \D^{\downarrow}(\GrMod A)$ unless $A$ is a finite-dimensional algebra, so the above equivalences are sometimes inconvenient to use. 
For this reason, we consider the following functors introduced by Mori \cite{Mo}.

\begin{dfn}[{\cite[Definition 2.5]{Mo}}]
Let $A$ be a Koszul algebra. We define the functors $\mathfrak{F}_A, \mathfrak{G}_A$ by the compositions
\begin{align*}
&\mathfrak{F}_A:\xymatrix @R=1pc@C=2pc{
 \D^{\uparrow}_{lf}(\GrMod A) \ar[r]^-{D}
&\D^{\downarrow}_{lf}(\GrMod  A^{\op})\ar[r]^-{F_{A^{\op}}}
&\D^{\uparrow}_{lf}(\GrMod  E^{\op})},\\
&\mathfrak{G}_A:\xymatrix @R=1pc@C=2pc{
\D^{\uparrow}_{lf}(\GrMod  A) \ar[r]^-{G_{A}}
&\D^{\downarrow}_{lf}(\GrMod  E) \ar[r]^-{D}
&\D^{\uparrow}_{lf}(\GrMod  E^{\op})},
\end{align*}
where $D$ is the graded $k$-dual.
\end{dfn}

Theorem \ref{t.BGS} induces the following.

\begin{thm}\label{t.KD}
Let $A$ be a Koszul algebra. Then 
\[
\mathfrak{F}_A: \D^{\uparrow}_{lf}(\GrMod A) \leftrightarrows \D^{\uparrow}_{lf}(\GrMod A^!): \mathfrak{G}_{A^!}
\]
form a duality such that
$\mathfrak{F}_A(X(i)[j])=\mathfrak{F}_A(X)(i)[-i-j]$, $\mathfrak{F}_A(A_0)=A^!$ and $\mathfrak{F}_A(A)\cong A^!_0$.
 \end{thm}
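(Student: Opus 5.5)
The plan is to obtain $\mathfrak{F}_A$ and $\mathfrak{G}_{A^!}$ by composing the Beilinson--Ginzburg--Soergel equivalences of Theorem~\ref{t.BGS} with the graded $k$-dual $D$, and then to read off the listed properties from those of $F_A$. Recall that $\mathfrak{F}_A = F_{A^{\op}}\circ D$. Here $D:\D^{\uparrow}_{lf}(\GrMod A)\to \D^{\downarrow}_{lf}(\GrMod A^{\op})$ is a duality. Indeed, if $X^i_j=0$ for $i\gg0$ or $i+j\ll0$, then $(DX)^{-i}_{-j}=D(X^i_j)$, and this vanishes for $-i\ll 0$ or $(-i)+(-j)\gg0$. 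Moreover, $D$ is exact and $DD\cong\id$ on locally finite objects. Since $A^{\op}$ is Koszul with $E(A^{\op})\cong A^!$, Theorem~\ref{t.BGS} applied to $A^{\op}$ shows that $F_{A^{\op}}:\D^{\downarrow}_{lf}(\GrMod A^{\op})\to \D^{\uparrow}_{lf}(\GrMod E(A^{\op}))$ is an equivalence with quasi-inverse $G_{E(A^{\op})}$. Hence $\mathfrak{F}_A$ is a duality onto $\D^{\uparrow}_{lf}(\GrMod A^!)$, after identifying $E^{\op}=E(A)^{\op}\cong A^!$. One small point to check is that the identification of $E^{\op}$-modules with $A^!$-modules used in the definition of $\mathfrak{F}_A$ agrees with $E(A^{\op})\cong A^!$. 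I would verify this via the canonical isomorphism $E(A)^{\op}\cong E(A^{\op})$ recalled above.

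For the quasi-inverse, the next step is to show that $\mathfrak{G}_{A^!}=D\circ G_{A^!}$ agrees with $D\circ G_{E(A^{\op})}$, up to natural isomorphism. Here we use that $(A^!)^!\cong A$ and that $E(A^!)\cong A^{\op}$-type identifications hold, so that $G_{A^!}$, viewed as a functor into $\GrMod E(A^!)=\GrMod (A^{\op})^{\op\op}$, is precisely the BGG inverse $G_{E(A^{\op})}$ for the Koszul algebra $A^{\op}$. Then
\[
\mathfrak{G}_{A^!}\circ\mathfrak{F}_A = D\, G_{E(A^{\op})}\, F_{A^{\op}}\, D\cong DD\cong \id ,
\]
and the same computation works in the other order. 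I expect this bookkeeping to be the main obstacle. One has to match the explicit formula for $G$ over $A^!$ (built from $v^*_\la$, $v_\la$ for $A^!$) with the formula for $G_{E}$ over $E(A^{\op})$, including the signs and the roles of $A_0$ versus $A^!_0$. I would first try to check that the canonical elements $\sum v_\la\otimes v^*_\la$ correspond under $(A^!)^!=A$; the complexes should then coincide termwise.

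Finally, the formulas follow directly from those in Theorem~\ref{t.BGS} for $A^{\op}$. We have $D(X(i)[j])=(DX)(-i)[-j]$, and $F_{A^{\op}}((DX)(-i)[-j])=F_{A^{\op}}(DX)(i)[-i-j]$, which gives $\mathfrak{F}_A(X(i)[j])=\mathfrak{F}_A(X)(i)[-i-j]$. Next, $D(A_0)=A_0$ as a left module, which is the $A^{\op}_0$ for $A^{\op}$, so $\mathfrak{F}_A(A_0)=F_{A^{\op}}(A^{\op}_0)=E(A^{\op})\cong A^!$. Lastly, $DA=D(A^{\op})$ as a graded $A^{\op}$-module, so $\mathfrak{F}_A(A)=F_{A^{\op}}(D A^{\op})\cong E(A^{\op})_0\cong A^!_0$.
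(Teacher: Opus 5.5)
Your proposal is correct and is essentially the paper's argument. The paper gives no separate proof; it simply notes that Theorem~\ref{t.BGS}, applied to the Koszul algebra $A^{\op}$ and composed with the exact graded dual $D$ (which interchanges $\D^{\uparrow}_{lf}$ and $\D^{\downarrow}_{lf}$), induces the duality and the formulas exactly as you compute them. The bookkeeping you flag as the main obstacle is harmless: $G_{A^!}$ is by definition the BGS construction for the Koszul algebra $A^!\cong E(A^{\op})$, so it is the quasi-inverse $G_{E(A^{\op})}$ of $F_{A^{\op}}$ up to that graded isomorphism, and no sign matching is needed.
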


We will show that if $A$ and $A^!$ are Koszul AS-Gorenstein algebras, then $\mathfrak{F}_A$ and $\mathfrak{G}_{A^!}$ induce a duality between $\D^b(\grmod A)$ and  $\D^b(\grmod A^!)$.
To prove this, we make some preparations.

\begin{lem}\label{l.fg}
Let $A$ be a Koszul algebra.
\begin{enumerate}
\item  If $X,Y\in \D_{lf}^{\uparrow}(\GrMod A)$, then
$\Ext_A^\ell(X,Y)_m \cong \Ext_{A^!}^{\ell+m}(\mathfrak{F}_A(Y),\mathfrak{F}_A(X))_{-m}$.
\item If $X \in \D^b_{lf}(\GrMod A)$, then $\mathfrak{F}_A(X)^\ell \in \grmod A^!$ for any $\ell \in \ZZ$.
\item If $X \in \D^b_{lf}(\GrMod A)$, then $\mathfrak{G}_A(X)^\ell \in \grmod A^!$ for any $\ell \in \ZZ$.
\end{enumerate}
\end{lem}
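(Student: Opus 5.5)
Part (1) follows from the duality of Theorem~\ref{t.KD}, with the degree bookkeeping recorded there. Since $\mathfrak{F}_A$ is a contravariant equivalence $\D^{\uparrow}_{lf}(\GrMod A)\to \D^{\uparrow}_{lf}(\GrMod A^!)$, it induces bijections
\[
\Hom_{\D(\GrMod A)}(X,Y(m)[\ell]) \cong \Hom_{\D(\GrMod A^!)}(\mathfrak{F}_A(Y(m)[\ell]),\mathfrak{F}_A(X)).
\]
By the shift rule $\mathfrak{F}_A(Y(m)[\ell])=\mathfrak{F}_A(Y)(m)[-m-\ell]$, the right-hand side equals
\[
\Hom(\mathfrak{F}_A(Y)(m),\mathfrak{F}_A(X)[\ell+m]) \cong \Hom(\mathfrak{F}_A(Y),\mathfrak{F}_A(X)(-m)[\ell+m]),
\]
which is $\Ext^{\ell+m}_{A^!}(\mathfrak{F}_A(Y),\mathfrak{F}_A(X))_{-m}$. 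One should check that $\D^{\uparrow}_{lf}$ is closed under $(m)$ and $[\ell]$; this is immediate from the definition. One should also check that morphisms in these subcategories agree with those in the full derived category. I would take the second point as part of the standard BGS framework: the relevant subcategories are full, and $\Ext$ is computed there.

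For (2), I would write out $\mathfrak{F}_A(X)=F_{A^{\op}}(DX)$ explicitly. Identify $E(A^{\op})\cong A^!$. Then
\[
\mathfrak{F}_A(X)^\ell=\bigoplus_j (DX)^{\ell-j}_j\otimes_{A_0}A^!,
\]
up to the grading conventions, where $(DX)^{\ell-j}_j=D(X^{j-\ell}_{-j})$.

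First replace $X$ by a suitable representative. Since $X$ has bounded, locally finite cohomology, I would take a minimal representative: a bounded complex with locally finite terms. If necessary, one can instead truncate the BGS model. Then each $(DX)^{\ell-j}_j$ is finite-dimensional. For fixed $\ell$, only finitely many $j$ contribute, because $X^{j-\ell}\neq0$ only for $j-\ell$ in a bounded range. Hence $\mathfrak{F}_A(X)^\ell$ is a finite direct sum of modules of the form (finite-dimensional $A_0$-module)$\otimes_{A_0}A^!$ with shifts. Each such module lies in $\grproj A^!\subset \grmod A^!$.

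The main obstacle is justifying this representative choice inside the framework where $\mathfrak{F}_A$ is defined. The cleanest route is to take $X$ to be an actual bounded complex of locally finite graded modules. Such a complex lies in $\K^{\uparrow}$ only if its internal degrees are bounded below in each term. If they are not, I would instead note that any bounded complex with locally finite cohomology is quasi-isomorphic to its cohomology filtered by truncations. This reduces, via triangles, to a single locally finite module $M$ placed in one cohomological degree. For such $M$, $(DM)_j$ is finite-dimensional for every $j$ and only $j$ with $j-\ell$ fixed contributes. Hence only the single index $j=\ell+\text{const}$ survives, and the term is finitely generated free.

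Part (3) is the same count applied to $G_A$. Its terms are
\[
G_A(X)^\ell_m=\bigoplus_j \Hom_{A_0}(E_{-m-j},X^{\ell-j}_j).
\]
Boundedness of $X$ restricts $j$ to finitely many values for each fixed $\ell$. Each summand $\bigoplus_m \Hom_{A_0}(E_{-m-j},X^{\ell-j}_j)\cong X^{\ell-j}_j\otimes_{A_0}DE$ is a shifted finite copy of $DE$. Applying $D$ turns this into a finite sum of shifted copies of $X^{\ell-j}_j{}^*\otimes E^{\op}\cong$ free $A^!$-modules of finite rank. Hence each term is again in $\grmod A^!$.
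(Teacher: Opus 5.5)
Your proposal is correct and follows the paper's proof. Part (1) is read off from the duality of Theorem~\ref{t.KD} together with the shift rule $\mathfrak{F}_A(X(i)[j])=\mathfrak{F}_A(X)(i)[-i-j]$. Parts (2) and (3) come from the explicit formulas $\mathfrak{F}_A(X)^\ell\cong\bigoplus_j D(X^{-\ell+j}_{-j})\otimes_{A_0}A^!(j)$ and $\mathfrak{G}_A(X)^\ell\cong\bigoplus_j E(-j)\otimes_{A_0}D(X^{-\ell-j}_{j})$: since $X$ is bounded with locally finite terms, for each $\ell$ only finitely many summands survive, each finitely generated projective.

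The paper tacitly takes $X$ to be a bounded complex of locally finite, bounded-below modules, which is automatic in its later use with $X\in\D^b(\grmod A)$. So your fallback via truncation triangles is not needed.
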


\begin{proof}  
(1) This easily follows from Theorem \ref{t.KD}.

(2) Since we have
\begin{align*}
\mathfrak{F}_A(X)^\ell = 
F_{A^{\op}}(DX)^\ell=\bigoplus_{j}D(X)^{\ell-j}_{j}\otimes_{A^{\op}_0}E^{\op}(j) \cong \bigoplus_{j}D(X^{-\ell+j}_{-j})\otimes_{A_0}A^!(j),
\end{align*}
the assertion follows.

(3) Since we have
\begin{align*}
\mathfrak{G}_A(X)^\ell &=
D({G_A}(X))^\ell\cong D({G_A}(X)^{-\ell})
\cong D(\bigoplus_{j}\Hom_{A_0}(E(-j),X^{-\ell-j}_{j}))\\
&\cong \bigoplus_{j}D(\Hom_{A_0}(E(-j),X^{-\ell-j}_{j}))
\cong \bigoplus_{j}E(-j) \otimes_{A_0} D(X^{-\ell-j}_{j}),
\end{align*}
the assertion follows.
\end{proof}

\begin{lem}\label{l.hh}
Let $A$ be an AS-Gorenstein algebra of dimension $d$.
\begin{enumerate}
\item For $X \in \D^{-}(\GrMod A)$ and $Y \in \D^b(\GrMod A^{\en})$, we have 
$\RGamma_{\fm}(X\Lotimes_A Y) \cong X\Lotimes_A \RGamma_{\fm}(Y)$.
\item For $X,Y \in \D^b(\grmod A)$,  we have $\RHom_A(\RGamma_{\fm}(X), Y) \cong \RHom_A(X, Y)$.
\end{enumerate}
\end{lem}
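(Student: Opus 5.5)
For part (1), the plan is to build a natural map and then reduce to the generator $A$. The torsion functor $\Gamma_{\fm}$ is computed by $\varinjlim_n \Hom_A(A/A_{\ge n},-)$ on right modules, so it commutes with the left tensor factor. Concretely, choose a K-projective resolution $P \to X$ in $\D^-(\GrMod A)$ by graded projective (free) right modules. Also choose an injective (or $\Gamma_\fm$-acyclic) resolution $Y \to I$ of bimodules whose terms are $\Gamma_{\fm}$-acyclic as right $A$-modules. This gives a natural morphism
\[
X\Lotimes_A \RGamma_{\fm}(Y) = P\otimes_A \Gamma_{\fm}(I) \to \Gamma_{\fm}(P\otimes_A I) \to \RGamma_{\fm}(X\Lotimes_A Y),
\]
and we need to show it is an isomorphism.

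Both sides are triangulated functors in $X$ that commute with arbitrary direct sums. For the right-hand side this holds because $\H^i_{\fm}$ is a direct limit of $\Ext^i_A(A/A_{\ge n},-)$, and each $A/A_{\ge n}$ has a resolution by finitely generated projectives since $A$ is noetherian. The two sides also commute with shifts $(s)$. The isomorphism is clear for $X=A(s)$, where both sides equal $\RGamma_{\fm}(Y)(s)$. For bounded-above complexes, we then pass to the brutal truncations of $P$ and take homotopy colimits. The degreewise boundedness needed here comes from $\RGamma_{\fm}$ having finite cohomological dimension $d$, since $\H^i_{\fm}=0$ for $i>d$ when $A$ is AS-Gorenstein. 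This is the main technical point: justifying the exchange of $\RGamma_{\fm}$ with the homotopy colimit. Finite cohomological dimension gives way-out arguments in both directions, so the isomorphism extends from finite free complexes to all of $\D^-$.

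For part (2), apply the triangle $\RGamma_{\fm}(X)\to X \to C \to$, where $C$ is the "punctured" part, i.e.\ $C=\RGamma_{\fm}$-acyclic ($\RGamma_\fm(C)=0$). It then suffices to show $\RHom_A(C,Y)=0$ for $Y\in\D^b(\grmod A)$. Using Proposition \ref{p.canm}(2), write $Y\cong \RHom_{A^{\op}}(\RHom_A(Y,\omega_A),\omega_A)$, where $Y':=\RHom_A(Y,\omega_A)\in\D^b(\grmod A^{\op})$. By adjunction this gives $\RHom_A(C,Y)\cong \RHom_{A^{\op}}(Y', \RHom_A(C,\omega_A))$, or equivalently $\RHom_k(C\Lotimes_A \cdots)$. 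Local duality, Proposition \ref{p.canm}(3), in the form $\RHom_A(C,\omega_A)[d]\cong D\RGamma_{\fm}(C)=0$ needs $C\in\D^-$. Since $\RGamma_{\fm}$ has finite cohomological dimension, $C\in\D^b(\GrMod A)$ whenever $X\in\D^b$, so the hypothesis is satisfied. Hence $\RHom_A(C,\omega_A)=0$, and therefore $\RHom_A(C,Y)=0$.

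One caveat concerns the adjunction step. It requires writing $\RHom_A(C,\RHom_{A^{\op}}(Y',\omega_A))\cong\RHom_{A^{\op}}(Y',\RHom_A(C,\omega_A))$, which is the standard swap isomorphism for the bimodule $\omega_A$. This is valid for arbitrary complexes via K-projective/K-injective resolutions. If it causes trouble, an alternative is to note that $\RHom_A(-,Y)$ for $Y$ of finite injective dimension over the AS-Gorenstein ring can be reduced, by dévissage on $Y$ and the fact that $A$ has finite self-injective dimension, to the case $Y=\omega_A(s)$. That case is handled directly by local duality.
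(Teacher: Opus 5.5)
Your proposal is correct. For part (1), the paper simply cites \cite[Proposition 2.1]{JoL}. Your way-out argument is the standard proof of that result: a natural map, a check on the generators $A(s)$, compatibility with direct sums, and control of truncations. The finite cohomological dimension it needs is available, since Proposition~\ref{p.canm}(3) applied to a module $M$ gives $D\H^i_{\fm}(M)\cong\Ext^{d-i}_A(M,\omega_A)=0$ for $i>d$.

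For part (2) you take a genuinely different route.
\begin{itemize}
\item The paper first proves the explicit formula $\RHom_A(\RGamma_{\fm^{\op}}(A),Y)\cong Y$. It does this via $k$-duality and local duality over $A^{\op}$ applied to the torsion complex $DY$. It then writes $\RGamma_{\fm}(X)\cong X\Lotimes_A\RGamma_{\fm}(A)$ using part (1), applies tensor--hom adjunction, and uses the bimodule identification $\RGamma_{\fm}(A)\cong\RGamma_{\fm^{\op}}(A)$.
\item You argue by orthogonality. The cone $C$ of $\RGamma_{\fm}(X)\to X$ is killed by $\RGamma_{\fm}$, so local duality gives $\RHom_A(C,\omega_A)=0$. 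Every $Y\in\D^b(\grmod A)$ is $\omega_A$-reflexive by Proposition~\ref{p.canm}(2), so the swap isomorphism forces $\RHom_A(C,Y)=0$.
\end{itemize}

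Your version is independent of part (1) and of the comparison between left and right local cohomology of $A$. It also shows more generally that $C$ is left orthogonal to every complex of the form $\RHom_{A^{\op}}(Z,\omega_A)$. Its cost is two facts you use without comment:
\begin{itemize}
\item finite cohomological dimension, which puts $C$ in $\D^-$ where local duality applies;
\item idempotence of $\RGamma_{\fm}$, which gives $\RGamma_{\fm}(C)=0$.
\end{itemize}
The second holds because, over a right noetherian $\NN$-graded algebra, the graded injective hull of a torsion module is torsion. Hence $\Gamma_{\fm}$ of an injective resolution is again a complex of injectives. The paper's route, in exchange, yields the Greenlees--May type formula $\RHom_A(\RGamma_{\fm}(A),Y)\cong Y$ as a byproduct.

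The fallback in your last paragraph would not work. When $A$ is not regular, objects of $\D^b(\grmod A)$ need not have finite injective dimension (e.g.\ $k$ over $k[x]/(x^2)$). Moreover, d\'evissage from the $\omega_A(s)$ only reaches $\thick(\grproj A)$. You do not need the fallback, though. The swap isomorphism is valid if you compute with K-projective resolutions of $C$ and $Y'$ and a K-injective bimodule resolution of $\omega_A$. That resolution remains K-injective on each side, because $A^{\en}$ is flat over $A$.
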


\begin{proof}
(1) This follows from \cite[Proposition 2.1]{JoL}.

(2) First, we have
\begin{equation}\label{e.hyp1}
\begin{split}
\RHom_A(\RGamma_{\fm^{\op}}(A), Y)
&\cong \RHom_{A^{\op}}(DY, D\RGamma_{\fm^{\op}}(A))\\ 
&\overset{\text{Prop.~\ref{p.canm}(3)}}{\cong} D\RGamma_{\fm^{\op}}(DY)
\cong DDY \cong Y.
\end{split}
\end{equation}
Using this, we obtain
\begin{align*}
\RHom_A(\RGamma_{\fm}(X), Y)
&\overset{\text{(1)}}{\cong} \RHom_A(X\Lotimes_A \RGamma_{\fm}(A), Y)
\cong \RHom_A(X,\RHom_A(\RGamma_{\fm}(A), Y))\\
&\cong \RHom_A(X,\RHom_A(\RGamma_{\fm^{\op}}(A), Y))
\overset{\text{\eqref{e.hyp1}}}{\cong}\RHom_A(X,Y).\qedhere
\end{align*}
\end{proof}

Here we focus on the two notions of regularity.

\begin{dfn} Let $A$ be an $\NN$-graded algebra.
\begin{enumerate}
\item The \emph{Castelnuovo-Mumford regularity} of $X\in \D(\GrMod A)$ is defined by
\[ \CMreg X :=
\sup\{ s \mid \H_{\fm}^{j}(X)_{s-j}\neq 0\ \text{for some}\ j \in \ZZ \}.
\]
\item The \emph{Ext-regularity} of $X\in \D(\GrMod A)$ is defined by
\[ \Extreg X :=
-\inf\{ s \mid \Ext^{j}_A(X, A_0)_{s-j}\neq 0\ \text{for some}\ j \in \ZZ \}.
\]
\end{enumerate}
\end{dfn}
 
\begin{thm}\label{t.reg}
Let $A$ be a Koszul AS-Gorenstein algebra and let $0 \neq X \in \D^b(\grmod A)$. Then $\Extreg X \leq \CMreg X <\infty$.
\end{thm}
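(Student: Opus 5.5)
The plan is to prove the two assertions separately. Finiteness of $\CMreg X$ will come from local duality, and the inequality will come from comparing $\RHom_A(X,A_0)$ with $\RHom_A(\RGamma_{\fm}(X),A_0)$ via Lemma~\ref{l.hh}(2), then using the linearity of the Koszul resolution of $A_0$.

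\emph{Step 1: $\CMreg X<\infty$ (and $>-\infty$).} By Proposition~\ref{p.canm}(3), $D\RGamma_{\fm}(X)\cong \RHom_A(X,\omega_A)[d]$. By Proposition~\ref{p.canm}(2) the right-hand side lies in $\D^b(\grmod A^{\op})$. So only finitely many $\H^j_{\fm}(X)$ are nonzero, and each $D\H^j_{\fm}(X)$ is a finitely generated graded left module over the $\NN$-graded algebra $A$, hence bounded below in degree. Therefore each $\H^j_{\fm}(X)$ is bounded above in degree, and $r:=\CMreg X<\infty$. Since $X\neq 0$, the duality of Proposition~\ref{p.canm}(2) gives $\RHom_A(X,\omega_A)\neq0$. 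Hence $\RGamma_{\fm}(X)\neq 0$ and $r>-\infty$, so the inequality is meaningful.

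\emph{Step 2: a single module.} Let $M\in\GrMod A$ with $M_i=0$ for $i>t$; $M$ need not be finitely generated. Since $A_0$ is a finite product of copies of $k$, we have $A_0\cong DA_0$ as bimodules. Adjunction then gives $\Ext^p_A(M,A_0)\cong D\,\mathrm{Tor}^A_p(M,A_0)$ for every $M$. Compute $\mathrm{Tor}$ using the linear (Koszul) projective resolution of the left module $A_0$, whose $p$-th term is generated in degree $p$. Then $M\otimes_A K_p$ lives in degrees $\le t+p$, so $\mathrm{Tor}_p^A(M,A_0)_q=0$ for $q>t+p$. Dualizing, $\Ext^p_A(M,A_0)_q=0$ for $q<-p-t$. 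Working with $\mathrm{Tor}$ rather than $\Ext$ avoids any $\varprojlim$ issues, since $M$ need not be finitely generated.

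\emph{Step 3: the complex.} Set $Y=\RGamma_{\fm}(X)$. By Lemma~\ref{l.hh}(2) applied to $X$ and $A_0$ (both in $\D^b(\grmod A)$), $\RHom_A(X,A_0)\cong\RHom_A(Y,A_0)$. Now $Y$ has finitely many nonzero cohomologies $H^j(Y)=\H^j_{\fm}(X)$, and these vanish in degrees $>r-j$ by definition of $r$. Filter $Y$ by its truncations, which is a finite filtration. Then $\Ext^n_A(Y,A_0)_q\neq0$ forces $\Ext^{n+j}_A(H^j(Y),A_0)_q\neq 0$ for some $j$. By Step 2 this gives $q\ge -(n+j)-(r-j)=-n-r$. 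Hence $\Ext^n_A(X,A_0)_{s-n}\neq 0$ implies $s\ge -r$. Taking the infimum gives $\inf\{\,s\mid\ldots\,\}\ge -r$, i.e.\ $\Extreg X\le \CMreg X$.

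I expect the main obstacle to be Step 2 together with the passage to the complex in Step 3. The modules $\H^j_{\fm}(X)$ are not finitely generated, so the bound must be obtained through $\mathrm{Tor}$ and the duality $A_0\cong DA_0$. Sign and degree conventions also need care, namely the shift $M(n)_i=M_{n+i}$ and the cohomological shift $H^j[-j]$, to land exactly on $-n-r$.
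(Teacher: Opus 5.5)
Your proposal is correct and follows essentially the same route as the paper. Finiteness comes from local duality, Lemma~\ref{l.hh}(2) lets you replace $X$ by $\RGamma_{\fm}(X)$, the linear resolution of the left module $A_0$ bounds $\Ext$ of each local cohomology module, and the pieces are then assembled. The differences are only in packaging. Your $\mathrm{Tor}$/adjunction computation is the paper's injective resolution $D(P^\bullet)$ in another form, since $\Hom_A(M,DP)\cong D(M\otimes_A P)$. Your truncation filtration plays the role of the paper's spectral sequence $\Ext^i_A(\H^{-j}_{\fm}(X),A_0)\Rightarrow\Ext^{i+j}_A(X,A_0)$.
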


\begin{proof}
We follow the same strategy as in the proof of \cite[Theorem 2.5]{JoLF}.
By Proposition \ref{p.canm}(2),(3), we have
$D\RGamma_{\fm}(X) \in \D^b(\grmod A^{\op})$, so it follows that $\CMreg X \neq \infty$.
We now show that $\Extreg X \leq \CMreg X$.
Let 
\begin{align}\label{e.mprA0}
\xymatrix @R=1pc@C=1pc{
\cdots \ar[r] & P^i \ar[r]&\cdots \ar[r]&P^1 \ar[r]&P^0 \ar[r]&A_0 \ar[r] &0
}
\end{align}
be a graded minimal projective resolution of $A_0$ in $\GrMod A^{\op}$. Since $A^{\op}$ is Koszul by \cite[Proposition 2.9.1]{BGS}, $P^i$ is generated in degree $i$, so $P^i$ is a direct summand of $A(-i)^{\oplus s_i}$ for some $s_i$. Applying $D$ to \eqref{e.mprA0}, we obtain an exact sequence
\[
\xymatrix @R=1pc@C=1pc{
0\ar[r] &D(A_0) \ar[r] &D(P^0)\ar[r] &D(P^1)\ar[r] &\cdots \ar[r]& D(P^i) \ar[r] &\cdots.
}
\]
Since $A_0$ is semisimple, this gives an injective resolution of $A_0 \cong D(A_0)$  in $\GrMod A$. Let $r=\CMreg X$. Then
$\H_{\fm}^{-j}(X)_{>r+j}=0$ for any $j \in \ZZ$, so we obtain $D(\H_{\fm}^{-j}(X))_{<-r-j}=0$.
Since
\begin{align*}
\Hom_A(\H_{\fm}^{-j}(X), D(P^i))
\subset \Hom_A(\H_{\fm}^{-j}(X), D(A(-i)^{\oplus s_i}))
&\cong \Hom_A(\H_{\fm}^{-j}(X), DA)(i)^{\oplus s_i}\\
&\cong \Hom_{A^{\op}}(A, D(\H_{\fm}^{-j}(X)))(i)^{\oplus s_i}\\
&\cong D(\H_{\fm}^{-j}(X))(i)^{\oplus s_i},
\end{align*}
we have $\Hom_A(\H_{\fm}^{-j}(X), D(P^i))_{<-r-i-j}=0$.
Moreover, since $\Ext^i_A(\H_{\fm}^{-j}(X), A_0)$ is a subquotient of $\Hom_A(\H_{\fm}^{-j}(X), D(P^i))$, we obtain $\Ext^i_A(\H_{\fm}^{-j}(X), A_0)_{<-r-i-j}=0$.
By Lemma \ref{l.hh}(2), there exists a spectral sequence
\[
E^{i,j}_2 =\Ext^i_A(\H_{\fm}^{-j}(X),A_0) \Rightarrow
\Ext^{i+j}_A(X,A_0).
\]
Since $(E^{i,j}_2)_{<-r-(i+j)}=0$ for all $i,j \in \ZZ$, it follows that $\Ext^{\ell}_A(X,A_0)_{<-r-\ell}=0$ for all $\ell \in \ZZ$. This implies $-r\leq-\Extreg X$. Hence $\Extreg X\leq r=\CMreg X$.
\end{proof}

Thanks to Theorem \ref{t.reg}, we have the following result.

\begin{thm}\label{t.DbfgF}
Let $A$ be a Koszul AS-Gorenstein algebra.
If $X \in \D^b(\grmod A)$, then $\mathfrak{F}_A(X) \in \D^b(\grmod A^!)$.
Moreover, if $A^!$ is AS-Gorenstein and
$Y \in \D^b(\grmod A^!)$, then $\mathfrak{G}_{A^!}(Y) \in \D^b(\grmod A)$.
\end{thm}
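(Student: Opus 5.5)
Since $X \in \D^b(\grmod A)$ has locally finite cohomology and lies in $\D^{\uparrow}_{lf}(\GrMod A)$, Lemma \ref{l.fg}(2) shows that every term $\mathfrak{F}_A(X)^\ell$ is finitely generated. The plan therefore reduces the first claim to a boundedness statement: only finitely many cohomology groups of $\mathfrak{F}_A(X)$ are nonzero, and each is finitely generated. I would read off the cohomology of $\mathfrak{F}_A(X)$ using Lemma \ref{l.fg}(1) with $Y = A_0$, together with $\mathfrak{F}_A(A_0) = A^!$ from Theorem \ref{t.KD}. This identifies $\Ext^{\ell}_A(X, A_0)_m$ with $\Ext^{\ell+m}_{A^!}(A^!, \mathfrak{F}_A(X))_{-m} = h^{\ell+m}(\mathfrak{F}_A(X))_{-m}$. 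Hence $h^n(\mathfrak{F}_A(X))_{-m} \cong \Ext^{n-m}_A(X, A_0)_m$.

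Boundedness of $\mathfrak{F}_A(X)$ amounts to showing that $\Ext^{\ell}_A(X,A_0)_m = 0$ unless $\ell + m$ lies in a bounded interval. One side comes from Theorem \ref{t.reg}: $\Ext^{\ell}_A(X,A_0)_m \neq 0$ forces $\ell + m \geq -\Extreg X \geq -\CMreg X > -\infty$. The other side is a degree bound. Replace $X$ by a minimal graded projective resolution $P$, which is bounded above in cohomological degree, with finitely generated terms, and with generators in degrees bounded below uniformly because $X$ is bounded and finitely generated. Then $\Ext^{\ell}_A(X,A_0)$ is computed by $\Hom_A(P^{-\ell}, A_0)$ with zero differential. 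That space is concentrated in degrees $m = -(\text{generator degrees of } P^{-\ell})$, so $m \leq C$ for a constant $C$. The remaining question is whether $\ell$ is bounded too. Since $X$ is bounded, $\Ext^\ell_A(X,A_0) = 0$ for $\ell \ll 0$. For $\ell \gg 0$ it need not vanish, but Theorem \ref{t.reg} gives $m \geq -\CMreg X - \ell$, while generator degrees satisfy $m \le C$. So $\ell+m \le \ell + C$, which is not yet bounded above. I expect this upper bound on $n = \ell+m$ to be the main obstacle.

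To get it, I would use that minimal resolutions of a bounded complex have generators in degrees $\geq -\ell - c$ for a constant $c$; that is, $m \leq \ell + c$ in $\Hom(P^{-\ell},A_0)$. Combined with $\ell \ge \ell_0$, this does not close the gap directly. The fix is to use finiteness of each graded piece: the cohomology is locally finite, and for a fixed $n$ only finitely many pairs $(\ell,m)$ contribute. That gives local finiteness but not boundedness. So instead I would bound $n$ from above by expressing $\mathfrak{F}_A(X)^\ell$ through the formula in Lemma \ref{l.fg}(2), namely $\bigoplus_j D(X^{-\ell+j}_{-j}) \otimes_{A_0} A^!(j)$. First I reduce, via truncation triangles and the fact that $\mathfrak{F}_A$ is a triangle functor, to $X$ a single finitely generated module $M$. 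Then, using the Castelnuovo--Mumford bound, I replace $M$ up to finite-dimensional pieces by its truncation $M_{\geq r}$ with $r > \CMreg M$. This truncation has a linear resolution in the Koszul sense, so $\mathfrak{F}_A(M_{\geq r}(r))$ is a module concentrated in one cohomological degree. Each finite-dimensional module is a finite extension of shifts of simples, which $\mathfrak{F}_A$ sends to shifts of $A^!e$. Since $\thick$ of bounded complexes of finitely generated modules is closed under cones, $\mathfrak{F}_A(M) \in \D^b(\grmod A^!)$.

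For $\mathfrak{G}_{A^!}(Y)$, I would apply the same argument with the roles of $A$ and $A^!$ interchanged, now using that $A^!$ is Koszul and AS-Gorenstein, so that Theorem \ref{t.reg} and Lemma \ref{l.fg}(3) apply to $A^!$.
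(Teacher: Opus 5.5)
Your lower bound is the paper's argument: identifying $h^{n}(\mathfrak{F}_A(X))_{-m}\cong\Ext^{n-m}_A(X,A_0)_m$ via Lemma \ref{l.fg}(1) and $\mathfrak{F}_A(A_0)=A^!$, and then invoking $\Extreg X\le\CMreg X<\infty$ from Theorem \ref{t.reg}, is how the paper proves $\inf\mathfrak{F}_A(X)=-\Extreg X>-\infty$.

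The upper bound, which you call the main obstacle, is in fact free. The paper simply notes $\mathfrak{F}_A(X)\in\D^{\uparrow}(\grmod A^!)$, so $\sup\mathfrak{F}_A(X)<\infty$. Concretely, the formula $\mathfrak{F}_A(X)^\ell\cong\bigoplus_j D(X^{-\ell+j}_{-j})\otimes_{A_0}A^!(j)$ that you quote vanishes for $\ell\gg0$. This is because $X$ is a bounded complex of finitely generated, hence bounded-below, modules. Your resolution argument gives the same result once a sign is corrected. Minimality forces the generators of $P^{-\ell}$ into degrees $\ge\ell+c$ (they grow with $\ell$). So $\Hom_A(P^{-\ell},A_0)$ sits in degrees $m\le-\ell-c$, i.e.\ $\ell+m\le-c$. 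The bound ``generators in degrees $\ge-\ell-c$'' you wrote points the wrong way, and that is the only reason the gap looked open.

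The detour you then commit to is valid but genuinely heavier. You reduce to a module $M$, split off the finite-dimensional $M_{<r}$ (which goes to a perfect complex), and send the linear module $M_{\ge r}(r)$ to a single module. This buys a finer structural statement: $\mathfrak{F}_A(M)$ is an extension of a shifted module by a perfect complex. It also makes the separate $\inf$ computation redundant.

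However, it relies on steps you only gesture at. Linearity of $M_{\ge r}$ needs Theorem \ref{t.reg} applied to $M_{\ge r}$, together with the check $\CMreg M_{\ge r}\le r$. That check comes from the local cohomology sequence of $0\to M_{\ge r}\to M\to M_{<r}\to0$, with $r>\CMreg M$ killing $\H^0_{\fm}(M_{\ge r})$. You must also still show that the single cohomology module of $\mathfrak{F}_A(M_{\ge r}(r))$ is finitely generated. The paper's two-line argument avoids all of this. The second claim is handled symmetrically in both versions.
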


\begin{proof}
If $X=0$, then the statement is clear, so we may assume that $X \neq 0$. 
Since $X \in \D^b(\grmod A) \subset  \D^{\uparrow}_{lf}(\grmod A)$, Theorem \ref{t.KD} and Lemma \ref{l.fg}(2) imply $\mathfrak{F}_A(X) \in \D^{\uparrow}(\grmod A^!)$.
In particular, $\sup \mathfrak{F}_A(X) <\infty$.
Moreover we have 
\begin{align*}
\inf \mathfrak{F}_A(X)
&= \inf \RHom_{A^!}(A^!, \mathfrak{F}_A(X))
=\inf\{ s \mid   \Ext^s_{A^!}(\mathfrak{F}_A(A_0), \mathfrak{F}_A(X))_{-s+j}\neq 0 \; \text{for some } j \in \ZZ \}\\
&\overset{\text{Lem.\,\ref{l.fg}(1)}}{=}\inf\{ s \mid   \Ext^{j}_A(X, A_0)_{s-j}\neq 0 \; \text{for some } j \in \ZZ \}
=-\Extreg X
\overset{\text{Thm.\,\ref{t.reg}}}{>}-\infty.
\end{align*}
Therefore $\mathfrak{F}_A(X) \in \D^b(\grmod A^!)$.
The proof of the second claim is similar.
\end{proof}

\begin{thm} \label{t.KDfg}
Let $A$ be a Koszul AS-Gorenstein algebra.
Assume that $A^!$ is AS-Gorenstein.
Then 
\[
\mathfrak{F}_A: \D^b(\grmod A) \leftrightarrows \D^b(\grmod A^!): \mathfrak{G}_{A^!}
\]
form a duality such that
$\mathfrak{F}_A(X(i)[j])=\mathfrak{F}_A(X)(i)[-i-j]$, $\mathfrak{F}_A(A_0)=A^!$ and $\mathfrak{F}_A(A)= A^!_0$.
\end{thm}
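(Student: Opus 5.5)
The statement is a restriction of the duality in Theorem~\ref{t.KD}. It therefore suffices to check that the two functors send the subcategories $\D^b(\grmod A)$ and $\D^b(\grmod A^!)$ into each other, and that these subcategories sit fully faithfully inside the ambient categories where the duality already holds.

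First I check the inclusions. For $A$ Koszul, $A$ is locally finite, so any $X\in \D^b(\grmod A)$ has locally finite cohomology. Its cohomology is finitely generated and hence bounded below in internal degree, so it can be represented by a complex lying in $\K^{\uparrow}$. For instance, take a graded minimal projective resolution truncated suitably; alternatively, use the inclusion $\D^b(\GrMod A)\subset \D^{\uparrow}_{lf}(\GrMod A)$ noted after Theorem~\ref{t.BGS}. Thus $\D^b(\grmod A)$ is a full triangulated subcategory of $\D^{\uparrow}_{lf}(\GrMod A)$. Full faithfulness of $\D^b(\grmod A)\to \D(\GrMod A)$ is standard for noetherian $A$, and $\D^{\uparrow}_{lf}$ is defined by inverting quasi-isomorphisms in $\K^{\uparrow}$. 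The same reasoning applies to $A^!$, which is Koszul and, by hypothesis, AS-Gorenstein and hence noetherian.

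Next I apply Theorem~\ref{t.DbfgF}. Its first part gives $\mathfrak{F}_A(\D^b(\grmod A))\subset \D^b(\grmod A^!)$. Its second part, applied to $A$ with $A^!$ AS-Gorenstein, gives $\mathfrak{G}_{A^!}(\D^b(\grmod A^!))\subset \D^b(\grmod A^{!!})=\D^b(\grmod A)$, using $A^{!!}\cong A$ for Koszul algebras. One point needs care here: Theorem~\ref{t.DbfgF} is phrased for $\mathfrak{G}_{A^!}$ with the roles set up so that the Koszul AS-Gorenstein algebra in question is $A^!$. I will apply its first-part argument to $A^!$, with the regularity bound of Theorem~\ref{t.reg} for $A^!$, together with the identification $\mathfrak{G}_{A^!}\cong$ the quasi-inverse of $\mathfrak{F}_A$ from Theorem~\ref{t.KD}.

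Since $\mathfrak{F}_A$ and $\mathfrak{G}_{A^!}$ are mutually quasi-inverse dualities on the ambient categories, their restrictions to these full subcategories are again mutually quasi-inverse dualities. The unit and counit isomorphisms live in the ambient category, and the subcategories are full. The formulas $\mathfrak{F}_A(X(i)[j])=\mathfrak{F}_A(X)(i)[-i-j]$, $\mathfrak{F}_A(A_0)=A^!$ and $\mathfrak{F}_A(A)=A^!_0$ are inherited directly from Theorem~\ref{t.KD}.

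The main obstacle is the bookkeeping in the second inclusion. I must make sure that $\mathfrak{G}_{A^!}$ is exactly the functor controlled by Theorem~\ref{t.DbfgF}. Its boundedness from below must come from the Ext-regularity bound for $A^!$, via Lemma~\ref{l.fg}(1),(3) with $A^!$ in place of $A$. This step uses that $A^!$ is AS-Gorenstein, so that Theorem~\ref{t.reg} applies to it. If the literal statement of Theorem~\ref{t.DbfgF} is insufficient, I will redo its computation of $\inf$ for $\mathfrak{G}_{A^!}(Y)$. To do so, I will express $\RHom_A(A,\mathfrak{G}_{A^!}(Y))$ through the duality as an Ext from $Y$ to $A^!_0$, and then bound it by $\CMreg Y$.
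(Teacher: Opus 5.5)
Your proposal is correct and is the paper's argument: the paper's proof says only that the statement follows from Theorems~\ref{t.KD} and~\ref{t.DbfgF}, meaning the duality is restricted to the full subcategories via the two inclusions and the formulas are inherited, as you do. Your worry about the second inclusion is unnecessary, since the second part of Theorem~\ref{t.DbfgF} states exactly $\mathfrak{G}_{A^!}(\D^b(\grmod A^!))\subset\D^b(\grmod A)$, and its omitted proof is your sketch via Theorem~\ref{t.reg} for $A^!$. Also, rely on your direct observation that bounded complexes of finitely generated graded modules already lie in $\K^{\uparrow}$, not on the inclusion $\D^b(\GrMod A)\subset\D^{\uparrow}_{lf}(\GrMod A)$, which as literally written fails for modules unbounded below in internal degree.
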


\begin{proof}
This follows from Theorems \ref{t.KD} and \ref{t.DbfgF}.
\end{proof}

The following theorem is the main result of this section, which yields Theorem \ref{t.qcdintro}.

\begin{thm} \label{t.qcd}
Let $A$ be a Koszul AS-Gorenstein algebra.
Assume that $A^!$ is AS-Gorenstein.
Then there exists a duality
\[
\overline{\mathfrak{F}}_A: \D^b(\grmod A)/\thick(\grproj  A) \leftrightarrows  \D^b(\grmod  A^!)/\thick(\fdim A^!): \overline{\mathfrak{G}}_{A^!}.
\]
In particular, this duality induces a duality $\mathfrak{E}:\uCM(A) \to \D^b(\qgr A^!)$.
\end{thm}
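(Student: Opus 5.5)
We will deduce the statement from the duality of Theorem~\ref{t.KDfg} by showing that $\mathfrak{F}_A$ carries $\thick(\grproj A)$ onto $\thick(\fdim A^!)$, and that $\mathfrak{G}_{A^!}$ carries it back. A duality of triangulated categories that restricts to a duality between thick subcategories descends to a duality of the Verdier quotients. This is because the universal property of the Verdier quotient applies equally to contravariant triangle functors; equivalently, one can view $\mathfrak{F}_A$ as a covariant equivalence onto the opposite category.

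\textbf{Step 1: the thick subcategories.} Since $A_0$ is a finite product of copies of $k$, $\grproj A$ is generated, up to finite direct sums and summands, by the modules $e_iA(j)$ for primitive idempotents $e_i\in A_0$. Hence $\thick(\grproj A)=\thick\{A(j)\mid j\in\ZZ\}$, taking direct summands into account. Likewise, every finite-dimensional graded $A^!$-module has a finite filtration with subquotients the simple modules $e_iA^!_0(j)$, so $\thick(\fdim A^!)=\thick\{A^!_0(j)\mid j\in \ZZ\}$.

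\textbf{Step 2: matching the generators.} By Theorem~\ref{t.KDfg}, $\mathfrak{F}_A(A(j))=A^!_0(j)[-j]$, and $\mathfrak{F}_A$ commutes with direct summands, so it sends $e_iA(j)$ to a shifted simple summand of $A^!_0$. As $\mathfrak{F}_A$ is an exact duality, it sends thick subcategories to thick subcategories, which gives $\mathfrak{F}_A(\thick(\grproj A))=\thick(\fdim A^!)$. Since $\mathfrak{G}_{A^!}$ is quasi-inverse to $\mathfrak{F}_A$, it gives the reverse inclusion. The one point needing care is to check that the idempotent decomposition is respected, i.e.\ that $\mathfrak{F}_A(e_iA)$ is the corresponding simple $A^!_0$-summand. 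This follows from additivity applied to $\mathfrak{F}_A(A)=A^!_0$.

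\textbf{Step 3: passing to the quotients.} The induced functors $\overline{\mathfrak{F}}_A$ and $\overline{\mathfrak{G}}_{A^!}$ are mutually quasi-inverse because the natural isomorphisms $\mathfrak{G}_{A^!}\mathfrak{F}_A\cong\id$ and $\mathfrak{F}_A\mathfrak{G}_{A^!}\cong \id$ descend. For the last assertion we use two standard identifications:
\begin{itemize}
\item Buchweitz's theorem for the Iwanaga--Gorenstein noetherian algebra $A$, graded version, which gives $\D^b(\grmod A)/\thick(\grproj A)\simeq\uCM(A)$;
\item Miyachi's result, which gives $\D^b(\grmod A^!)/\D^b_{\fdim}(\grmod A^!)\simeq \D^b(\qgr A^!)$ for the Serre quotient $\qgr A^!=\grmod A^!/\fdim A^!$. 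The subcategory $\D^b_{\fdim}$ of complexes with finite-dimensional total cohomology equals $\thick(\fdim A^!)$, by truncation in the finite-length case.
\end{itemize}
Both require $A$ and $A^!$ to be noetherian, which holds by the AS-Gorenstein assumption. Composing these identifications with $\overline{\mathfrak{F}}_A$ yields the duality $\mathfrak{E}$.

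\textbf{Main obstacle.} We expect the main obstacle to be the bookkeeping in Step 2, namely confirming that the images of the generators generate exactly the target thick subcategories. The identifications in Step 3 are routine citations once the two hypotheses above are noted.
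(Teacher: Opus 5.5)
Your proposal is correct and follows the paper's proof. The paper likewise uses $\mathfrak{F}_A(A(j))=A^!_0(j)[-j]$ and $\mathfrak{G}_{A^!}(A^!_0(j))=A(j)[-j]$ to restrict the duality of Theorem~\ref{t.KDfg} to $\thick\{A(i)\}\leftrightarrows\thick\{A^!_0(i)\}$, passes to the Verdier quotients, and composes with Buchweitz's equivalence and $\D^b(\grmod A^!)/\thick(\fdim A^!)\simeq\D^b(\qgr A^!)$. The only differences are that the paper cites Mart\'inez Villa--Saor\'in rather than Miyachi for that last identification, and that your idempotent check in Step~2 is unnecessary, since thick subcategories are closed under direct summands and the generators $A(j)$ and $A^!_0(j)$ already suffice.
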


\begin{proof}
Since $\mathfrak{F}_A(A(j))
=A^!_0(j)[-j] \in \thick\{A^!_0(i) \mid i \in \ZZ \} \subset \D^b(\grmod A^!)$
and $\mathfrak{G}_{A^!}(A^!_0(j))
=A(j)[-j]\in \thick\{A(i) \mid i \in \ZZ \} \subset \D^b(\grmod A)$ by Theorem \ref{t.KDfg}, $\mathfrak{F}_A$ and $\mathfrak{G}_{A^!}$ restrict to a duality
\begin{align*}
\thick(\grproj A)=\thick\{A(i) \mid i \in \ZZ \} \leftrightarrows \thick\{A_0^!(i) \mid i \in \ZZ \}=\thick(\fdim A^!).
\end{align*}
Therefore, they induce a duality between the corresponding Verdier quotient categories.
Using the Buchweitz equivalence \cite{Bu} and \cite[Theorem~4.4]{MS}, we obtain a duality
\[
\mathfrak{E}:\uCM(A) \xrightarrow{\sim} 
\D^b(\grmod A)/\thick(\grproj A)
\xrightarrow{\ \overline{\mathfrak{F}}_A\ }
\D^b(\grmod A^!)/\thick(\fdim A^!)
\xrightarrow{\sim}
\D^b(\qgr A^!). \qedhere
\]
\end{proof}

We will use the following results later.

\begin{lem} \label{thickg}
Let $A$ be a Koszul AS-Gorenstein algebra of dimension $d$. Assume that $A^!$ is AS-Gorenstein.
Let $\mathfrak{E}:\uCM(A) \to \D^b(\qgr A^!)$ be the duality in Theorem \ref{t.qcd}.
Then $\mathfrak{E}(\Omega^d A_0(d)) \cong A^!(d)$.
\end{lem}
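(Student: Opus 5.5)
We trace $\Omega^d A_0(d)$ through the three steps that make up $\mathfrak{E}$ in the proof of Theorem \ref{t.qcd}: the Buchweitz equivalence, then $\overline{\mathfrak{F}}_A$, then the quotient to $\qgr A^!$.

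First we identify $\Omega^d A_0(d)$ in the singularity category $\D^b(\grmod A)/\thick(\grproj A)$. Let $P^\bullet \to A_0$ be the graded minimal projective resolution. Since $A$ is Koszul, $P^i$ is generated in degree $i$. The truncation gives a triangle in $\D^b(\grmod A)$
\[
P^{\bullet\le d-1} \to A_0 \to \Omega^d A_0[d] \to P^{\bullet\le d-1}[1],
\]
with $P^{\bullet\le d-1}\in\thick(\grproj A)$. Hence, in the Verdier quotient, $\Omega^d A_0 \cong A_0[-d]$. Under the Buchweitz equivalence the object $\Omega^d A_0(d)\in\uCM(A)$ therefore goes to $A_0(d)[-d]$. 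Here we use that $\Omega^d A_0$ is maximal Cohen-Macaulay because $\injdim A = d$.

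Next we apply $\overline{\mathfrak{F}}_A$. Theorem \ref{t.KDfg} gives $\mathfrak{F}_A(X(i)[j])=\mathfrak{F}_A(X)(i)[-i-j]$ and $\mathfrak{F}_A(A_0)=A^!$. Taking $i=d$ and $j=-d$, the shift is $[-d-(-d)]=[0]$, so
\[
\mathfrak{F}_A(A_0(d)[-d]) = A^!(d)[0] = A^!(d).
\]
Passing to $\D^b(\grmod A^!)/\thick(\fdim A^!)\simeq \D^b(\qgr A^!)$, this object is the image of $A^!(d)$ in $\qgr A^!$, which we denote $A^!(d)$ as in the statement.

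The main point needing care is bookkeeping rather than anything conceptual. We must check that the Buchweitz functor $\uCM(A)\to \D^b(\grmod A)/\thick(\grproj A)$ is the natural inclusion-induced one, so that the triangle above identifies the image of $\Omega^dA_0$ with $A_0[-d]$ and not with $A_0[d]$. We must also check that the sign convention for $[j]$ in the formula of Theorem \ref{t.KDfg} is as stated, so that the internal shift and the cohomological shift cancel exactly. If the conventions turned out reversed, we would recompute the shift through Theorem \ref{t.BGS}, using $F_A(X(i)[j])=F_A(X)(-i)[i+j]$ together with the duality $D$.
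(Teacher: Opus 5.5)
Your proposal is correct and matches the paper's proof. The paper likewise uses $A_0(d)\cong\Omega^dA_0(d)[d]$ in $\D^b(\grmod A)/\thick(\grproj A)$, which your truncation triangle justifies. It then applies the shift rule of Theorem \ref{t.KDfg} with $\mathfrak{F}_A(A_0)=A^!$ to get $\overline{\mathfrak{F}}_A(\Omega^dA_0(d))\cong\mathfrak{F}_A(A_0(d)[-d])\cong A^!(d)$.
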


\begin{proof}
By Theorem \ref{t.KDfg}, we have $\mathfrak{F}_A(A_0(d)[-d])\cong A^!(d)$.
Moreover, $A_0(d) \cong \Omega^dA_0(d)[d]$ in $\D^b(\grmod A)/\thick(\grproj  A)$.
Hence $\overline{\mathfrak{F}}_A(\Omega^dA_0(d))\cong \overline{\mathfrak{F}}_A(A_0(d)[-d]) \cong A^!(d)$,
and the assertion follows.
\end{proof}

\begin{prop}\label{pfds}
Let $A$ be a Koszul AS-regular algebra.
Then $A^!$ is a finite-dimensional Koszul AS-Gorenstein algebra of dimension $0$.
\end{prop}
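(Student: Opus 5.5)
Let $A$ be Koszul AS-regular of dimension $d$. I would establish three things in turn: $A^!$ is Koszul, $A^!$ is finite-dimensional and noetherian, and $A^!$ satisfies the AS-Gorenstein conditions with $d$ replaced by $0$.

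Koszulity of $A^!$ is the standard fact from \cite{BGS} recalled in Section 2.

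For finite-dimensionality, we have $A^!\cong E(A)^{\op}$ with $E(A)_i=\Ext^i_A(A_0,A_0)$. Since $\gldim A=d$, the linear minimal projective resolution of $A_0$ has length $d$ and finitely generated terms; $A$ is noetherian and locally finite. Hence $E(A)_i=0$ for $i>d$, and each $E(A)_i$ is finite-dimensional. So $A^!$ is finite-dimensional, in particular noetherian, and $A^!_0=A_0$ is basic semisimple.

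For the Gorenstein conditions, the plan is to show that $A^!$ is self-injective on both sides, i.e.\ $\injdim_{A^!}A^!=\injdim_{(A^!)^{\op}}A^!=0$. The natural tool is Theorem \ref{t.KD}: use the duality $\mathfrak{F}_A$ and Lemma \ref{l.fg}(1) to transport Ext computations. We have $\mathfrak{F}_A(A_0)=A^!$. Moreover, each simple $A^!$-module $S'$ is a summand of $A^!_0=\mathfrak{F}_A(A)$, so $S'=\mathfrak{F}_A(eA)$ for an idempotent $e$. Then
\[
\Ext^{\ell+m}_{A^!}(S',A^!)_{-m}\cong \Ext^\ell_A(A_0,eA)_m .
\]
AS-regularity gives $\Ext^\ell_A(A_0,A)=0$ for $\ell\ne d$, and since $A_0$ has a linear resolution, $\Ext^d_A(A_0,A)$ is concentrated in degree $m=d$. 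Thus $\Ext^\ell_A(A_0,eA)_m\neq0$ forces $\ell=d$ and $m=d$, i.e.\ $\ell+m=2d$ in the formula above; I need to recheck the sign conventions of Lemma \ref{l.fg}(1) at this point. The correctly bookkept version should say $\Ext^i_{A^!}(S',A^!)=0$ for $i\ne0$. Since $A^!$ is finite-dimensional, the vanishing of $\Ext^{i}_{A^!}(S',A^!)$ for all simples $S'$ and all $i>0$ gives that $A^!$ is injective as a right module. Applying the same argument to $A^{\op}$, which is Koszul AS-regular with $(A^{\op})^!\cong(A^!)^{\op}$, gives the left version.

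For the bijection on simples, $\Hom_{A^!}(-,A^!)$ is an exact duality on finite-dimensional modules over a self-injective algebra. The computation above identifies $\Hom_{A^!}(S',A^!)$ with $\Ext^d_A(A_0,eA)$ up to shift. By AS-regularity this is a simple left module, and its image under the corresponding left-side duality is simple. Hence $\Hom_{A^!}(-,A^!)$ sends simples to simples, and bijectively, since it is a duality.

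The main obstacle is the degree and sign bookkeeping in Lemma \ref{l.fg}(1), needed to confirm that the vanishing indices land exactly at $i=0$. If that becomes messy, the fallback is the direct route: the Koszul complex is self-dual under $\Hom_A(-,A)$ up to shift $d$ by AS-regularity. Dualizing the linear resolution then shows $E(A)_d\otimes$-pairing $E(A)_i\times E(A)_{d-i}\to E(A)_d$ is nondegenerate, which is the Frobenius property of $A^!$.
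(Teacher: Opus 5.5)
\paragraph{Verdict.} Your overall strategy is the paper's: transport $\Ext_A(A_0,A)$ through Lemma \ref{l.fg}(1), conclude $\Ext^i_{A^!}(A^!_0,A^!)=0$ for $i\neq 0$, and then handle the simples. The paper applies this to $A^!_0=\mathfrak{F}_A(A)$ all at once rather than to one summand $\mathfrak{F}_A(eA)$ at a time, but that makes no difference.

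\paragraph{The gap.} The step you left open is the only real content of the proof, and as written it is wrong. You assert that the correct bookkeeping ``should say'' $\Ext^i_{A^!}(S',A^!)=0$ for $i\neq 0$, but you never verify it. Moreover, the discrepancy does not come from the conventions of Lemma \ref{l.fg}(1), which are correct as stated. It comes from your claim that $\Ext^d_A(A_0,A)$ is concentrated in degree $d$; in fact it is concentrated in degree $-d$.

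\paragraph{The fix.} Since $P^d$ is generated in degree $d$, the module $\Hom_A(P^d,A)$ is a sum of summands of $\Hom_A(A(-d),A)\cong A(d)$, so it is generated in degree $-d$. Now $\Ext^d_A(A_0,A)$ is a quotient of $\Hom_A(P^d,A)$, and by AS-regularity it is a direct sum of graded simple left modules, so it is annihilated by $A_{\ge 1}$. Hence it equals its degree $-d$ part. (Equivalently, the Gorenstein parameter is $d$.)

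With $\ell=d$ and $m=-d$, Lemma \ref{l.fg}(1) gives
\[
\Ext^{i}_{A^!}(S',A^!)_{m'}\cong\Ext^{i+m'}_A(A_0,eA)_{-m'} .
\]
This is nonzero only when $i+m'=d$ and $m'=d$, that is, only for $i=0$ (in internal degree $d$). With this one-line correction your argument is complete and coincides with the paper's. The Frobenius-pairing fallback is not needed.

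\paragraph{Minor remarks on the last step.} Lemma \ref{l.fg}(1) only identifies graded vector spaces, so it does not justify calling $\Ext^d_A(A_0,eA)$ ``a simple left module.'' That module is not naturally a left $A$-module at all; what AS-regularity and basicness of $A_0$ actually give is that it is one-dimensional. This is exactly the paper's dimension count $\dim_k\Hom_{A^!}(A^!_0,A^!)=\dim_k\Ext^d_A(A_0,A)=\dim_k A_0$.

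You do not need that sentence anyway. Over a finite-dimensional self-injective algebra, $\Hom_{A^!}(-,A^!)$ is a duality, and a duality automatically sends simples to simples bijectively. Finally, rerunning the argument for $A^{\op}$ to obtain left self-injectivity is harmless but unnecessary, since a finite-dimensional right self-injective algebra is QF.
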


\begin{proof}
Let $d = \gldim A$. Since $A$ is Koszul AS-regular, we see that $A^!$ is finite-dimensional, 
$\Ext_A^i(A_0, A) = 0$ for $i \ne d$, and $\Ext_A^d(A_0, A)_m = 0$ for $m \ne -d$.
By Lemma \ref{l.fg}, we have
$\Ext_{A^!}^i(A^!_0, A^!)_m\cong \Ext_{A^!}^i(\mathfrak{F}_A(A), \mathfrak{F}_A(A_0))_m
\cong \Ext_{A}^{i+m}(A_0, A)_{-m}$.
Therefore, $\Ext_{A^!}^i(A^!_0, A^!) = 0$ for all $i \ne 0$, 
and hence $A^!$ is self-injective.
Since $\Hom_{A^!}(N,A^!) \neq 0$ for every graded simple module $N\in \grmod A^!$, and $\dim_k \Hom_{A^!}(A_0^!,A^!)=\dim_k \Ext^d_{A}(A_0,A)=\dim_k A_0$, 
it follows that $\dim_k \Hom_{A^!}(N,A^!)=1$ for every graded simple module $N\in \grmod A^!$.
Hence $A^!$ is AS-Gorenstein of dimension $0$.
\end{proof}

\section{Trivial extensions}

In this section, we study trivial extensions of graded algebras 
and relate them to Ore extensions and Zhang twists.

\begin{dfn}
Let $A$ be a $\mathbb Z$-graded algebra and
let $M \in \GrMod A^{\en}$ be a graded $A$-bimodule.
The graded trivial extension of $A$ by $M$ is the $\ZZ$-graded algebra
defined by
\[
A \ltimes M := \bigoplus_{i \in\mathbb Z} (A_i \oplus M_i),
\]
with multiplication
\[
(a,m)(a',m') = (aa',\, am' + ma')
\]
for $a,a'\in A$ and $m,m'\in M$.
\end{dfn}

\begin{lem}\label{lt1}
Let $A$ be a $\ZZ$-graded algebra and let  $\sigma\in\GrAut(A)$.
For an integer $s \in \mathbb Z$, consider the graded $A$-bimodule
$L := A_\sigma(-s)$ in $\GrMod A^{\en}$.
Then there is an isomorphism of graded algebras $A \ltimes L \cong A[x;\sigma]/(x^2)$, where $\deg x = s$.
\end{lem}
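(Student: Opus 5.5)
The plan is to write down an explicit map and check it directly. Define
\[
\phi : A[x;\sigma] \to A \ltimes L,\qquad \phi\Bigl(\sum_{i\ge 0} a_i x^i\Bigr) := (a_0, a_1),
\]
where $a_1$ is regarded as an element of $L = A_\sigma(-s)$. The map kills every term $a_i x^i$ with $i \ge 2$.

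\textbf{Grading.} A homogeneous $a_1 x$ of degree $n$ has $\deg a_1 = n - s$, so $a_1 \in A_{n-s} = (A(-s))_n = L_n$. Similarly $a_0 \in A_n$. Hence $\phi$ is degree-preserving, and it is clearly $k$-linear and surjective.

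\textbf{Multiplicativity.} It suffices to check products of monomials $a x^i$ and $b x^j$. By the defining relation of the Ore extension,
\[
(a x^i)(b x^j) = a\,\sigma^i(b)\, x^{i+j}.
\]
\begin{itemize}
\item If $i + j \ge 2$, then both sides map to $0$. On the right this holds because any product of two elements of $L$ vanishes in $A \ltimes L$.
\item If $i = j = 0$, the product is $ab$, and $\phi(ab) = (ab, 0) = (a,0)(b,0)$.
\item If $i = 1$, $j = 0$, the product is $a\sigma(b)x$, with image $(0, a\sigma(b))$. In $A \ltimes L$ we have $(0,a)(b,0) = (0, a\cdot b)$, and the right action on $A_\sigma$ is $a \cdot b = a\sigma(b)$, so the two agree.
\item If $i = 0$, $j = 1$, the product is $abx$, with image $(0, ab) = (a,0)(0,b)$, since the left action on $L$ is ordinary multiplication.
\end{itemize}
Also $\phi(1) = (1,0)$, so $\phi$ is a unital graded algebra homomorphism.

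\textbf{Kernel.} The kernel of $\phi$ is $\bigoplus_{i \ge 2} A x^i$. It remains to identify this with the two-sided ideal $(x^2)$.
\begin{itemize}
\item For $\supseteq$: the kernel is an ideal containing $x^2$.
\item For $\subseteq$: every element $a x^i$ with $i \ge 2$ equals $(a x^{i-2})\,x^2$, so it lies in $(x^2)$.
\end{itemize}
Therefore $A[x;\sigma]/(x^2) \cong A \ltimes L$ as graded algebras, with $\deg x = s$.

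None of these steps is a genuine obstacle. The only point requiring care is the case $i=1$, $j=0$: it is where the twist appears, and it explains why the bimodule must be $A_\sigma$ (twisted on the right) rather than ${}_\sigma A$.
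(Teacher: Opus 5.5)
Your proof is correct and follows essentially the same route as the paper: you use the explicit identification $(a,b)\leftrightarrow a+bx$ and check multiplicativity via the Ore relation $xa=\sigma(a)x$. The only difference is that you define the map on $A[x;\sigma]$ and compute its kernel to be $(x^2)=\bigoplus_{i\ge 2}Ax^i$. The paper instead defines $\Phi(a,b)=a+bx$ on $A\ltimes L$ and asserts bijectivity directly, since both sides are $A\oplus A(-s)$ as graded vector spaces; your kernel computation makes that identification explicit.
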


\begin{proof}
Define a $k$-linear map
$\Phi \colon A \ltimes L \to A[x;\sigma]/(x^2)$
by $\Phi(a,b)=a+bx$ for $a\in A$ and $b\in L$.
Then $\Phi$ is graded and bijective since both algebras are isomorphic to
$A \oplus A(-s)$ as graded vector spaces. For $a,a'\in A$ and $b,b'\in L$, we have
\[\Phi((a,b)(a',b'))
= \Phi(aa',\, a\cdot b' + b\cdot a')
= \Phi(aa',\, ab' + b\sigma(a'))
= aa' + (ab' + b \sigma(a'))x.
\]
On the other hand,
\[
\Phi(a,b)\Phi(a',b')
= (a + bx)(a' + b'x)
= aa' + ab'x + bxa' + bxb'x
= aa' + (ab' + b\si(a'))x
\]
by the Ore relation and $x^2=0$.
Hence $\Phi$ is an isomorphism of $\mathbb Z$-graded algebras.
\end{proof}

\begin{lem}\label{ltg}
Let $A$ be an AS-Gorenstein algebra of dimension $d$ and let $\sigma\in\GrAut(A)$.
For an integer $s \geq 1$, consider the graded $A$-bimodule
$L := A_\sigma(-s)$ in $\GrMod A^{\en}$.
Then $A \ltimes L$ is AS-Gorenstein of dimension $d$.
\end{lem}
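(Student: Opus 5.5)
By Lemma~\ref{lt1} we may identify $A \ltimes L$ with the graded algebra $B := A[x;\sigma]/(x^2)$, where $\deg x = s \geq 1$. The plan is to realize $B$ as a quotient of an AS-Gorenstein algebra by a regular normal element and then apply Proposition~\ref{pLe}.

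First, Proposition~\ref{poz}(1) applies because $\deg x = s \ge 1$. It says that the graded Ore extension $C := A[x;\sigma]$ is AS-Gorenstein of dimension $d+1$. The proof given there only uses that $\injdim$ and the simple-module bijection pass through the quotient by the regular normal element $x$, so the AS-Gorenstein version holds as stated.

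Next, I would check that $f := x^2 \in C$ is a homogeneous normal element of positive degree $2s$ that is regular.
\begin{itemize}
\item Normality: $x^2 a = \sigma^2(a) x^2$ for all $a \in A$, and $x^2$ commutes with $x$. Hence $x^2 C = C x^2$.
\item Regularity: $C$ is a free left and right $A$-module on $\{x^i\}_{i\ge 0}$. Right multiplication by $x^2$ sends $\sum a_i x^i$ to $\sum a_i x^{i+2}$, which is injective. For left multiplication, $x^2 \sum a_i x^i = \sum \sigma^2(a_i) x^{i+2}$, which is injective because $\sigma$ is bijective.
\end{itemize}
Then $C/(x^2) = B$, and Proposition~\ref{pLe}, applied with $d+1$ in place of $d+1$ and $f = x^2$, shows that $B$ is AS-Gorenstein of dimension $d$.

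Finally, I would confirm the remaining parts of the definition for $B$ directly, in case Proposition~\ref{pLe} is read as presupposing them.
\begin{itemize}
\item $B$ is noetherian, since $C$ is noetherian (Ore extension of a noetherian algebra by an automorphism) and $B$ is a quotient of $C$.
\item $B_0 = A_0$ is basic semisimple, because $s \ge 1$.
\end{itemize}

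The main point needing care is making sure Proposition~\ref{poz}(1) and Proposition~\ref{pLe} are used with matching hypotheses: both need a positive-degree regular normal element, which is exactly where $s \ge 1$ enters. The rest is routine verification.
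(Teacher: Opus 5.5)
Your proposal is correct and follows the paper's proof: identify $A\ltimes L$ with $A[x;\sigma]/(x^2)$ via Lemma~\ref{lt1}, get $A[x;\sigma]$ AS-Gorenstein of dimension $d+1$ from Proposition~\ref{poz}(1), and apply Proposition~\ref{pLe} to the regular normal element $x^2$. Your explicit check that $x^2$ is normal and regular fills in a step the paper only asserts; the phrase ``with $d+1$ in place of $d+1$'' is a harmless slip.
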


\begin{proof}
By Lemma \ref{lt1}, we have an isomorphism
$A \ltimes L \cong A[x;\sigma]/(x^2)$ with $\deg x = s\geq 1$.
By Proposition \ref{poz}(1), $A[x;\sigma]$
is AS-Gorenstein of dimension $d+1$.
Since $x^2$ is also a regular normal element of $A[x;\sigma]$,
Proposition~\ref{pLe} implies that
$A[x;\sigma]/(x^2)$ is AS-Gorenstein of dimension $d$.
\end{proof}

We remark that when $A$ is connected graded and $L=\om_A(-s)\cong A_\nu(-\ell-s)$ with $s\geq -\ell+1$, the assertion of Lemma \ref{ltg} also follows from \cite[Proposition 1.5]{JoC}.

\begin{lem}\label{ltt}
Let $A$ be a $\mathbb Z$-graded algebra and let $\sigma \in \GrAut(A)$.
Consider the graded algebra $A[x;\sigma]/(x^2)$ with $\deg x=1$.
\begin{enumerate}
\item
The assignment $\widehat{\sigma}(a+bx)=\sigma(a)+\sigma(b)x$ for $a,b\in A$
defines a graded algebra automorphism of $A[x;\sigma]/(x^2)$.
\item
$(A[x;\sigma]/(x^2))^{\widehat{\sigma}^{-1}} \cong A^{\sigma^{-1}}[x]/(x^2)$ as graded algebras.
\end{enumerate}
\end{lem}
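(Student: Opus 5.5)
Part (1) is a direct check that $\widehat{\sigma}$ is well defined, graded, bijective and multiplicative. Write $B := A[x;\sigma]/(x^2)$. As a graded vector space $B = A \oplus Ax$, so $\widehat{\sigma}$ is a well-defined graded linear bijection with inverse $\widehat{\sigma^{-1}}$; it is graded because $\sigma$ is graded and $\deg x = 1$. For multiplicativity, the product rule from the proof of Lemma \ref{lt1} gives $(a+bx)(a'+b'x) = aa' + (ab' + b\sigma(a'))x$. Applying $\widehat{\sigma}$ to this yields $\sigma(a)\sigma(a') + (\sigma(a)\sigma(b') + \sigma(b)\sigma^2(a'))x$. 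On the other side,
\[
(\sigma(a)+\sigma(b)x)(\sigma(a')+\sigma(b')x) = \sigma(a)\sigma(a') + (\sigma(a)\sigma(b') + \sigma(b)\sigma(\sigma(a')))x,
\]
and the two agree. Unit preservation is clear. Hence $\widehat{\sigma}\in\GrAut(B)$.

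For (2), the plan is to take the identity on the underlying graded vector space $A\oplus Ax$ and check that it is an algebra isomorphism $B^{\widehat{\sigma}^{-1}} \to A^{\sigma^{-1}}[x]/(x^2)$, with $x$ central of degree $1$ in the target. Let $\ast$ denote the twisted product in $B^{\widehat{\sigma}^{-1}}$ and $\star$ the product in $A^{\sigma^{-1}}$. It suffices to compare products of homogeneous elements of the four types $a$, $bx$ against $a'$, $b'x$, with $a\in A_i$ and $b\in A_{j}$; note $bx$ then has degree $j+1$.
\begin{itemize}
\item $a \ast a' = a\sigma^{-i}(a') = a\star a'$.
\item $a \ast b'x = a\,\widehat{\sigma}^{-i}(b'x) = a\sigma^{-i}(b')x = (a\star b')x$.
\item $bx \ast a' = bx\,\sigma^{-(j+1)}(a') = b\,\sigma(\sigma^{-j-1}(a'))\,x = b\sigma^{-j}(a')x = (b\star a')x$.
\item $bx \ast b'x = 0$, since $x\cdot(\text{anything})\cdot x \in Ax^2 = 0$.
\end{itemize}
In $A^{\sigma^{-1}}[x]/(x^2)$ with $x$ central we have $a\cdot b'x = (a\star b')x$, $bx\cdot a' = (b\star a')x$ and $x^2=0$, so these products match exactly. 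Bilinearity then gives the isomorphism of graded algebras.

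The only delicate step is the third case. There one must use that the degree of $bx$ is $j+1$ rather than $j$, so that the extra power of $\sigma^{-1}$ is cancelled by the Ore relation $xa = \sigma(a)x$. This is precisely why the twist is taken by $\widehat{\sigma}^{-1}$ and why $\deg x = 1$ is needed; the rest is bookkeeping.
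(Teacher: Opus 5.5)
Your proposal is correct and follows the paper's proof. Part (1) is the same direct multiplicativity check using $(a+bx)(a'+b'x)=aa'+(ab'+b\sigma(a'))x$. Part (2) is the same identification on $A\oplus Ax$, showing that the twisted product agrees with the product in $A^{\sigma^{-1}}[x]/(x^2)$. The only cosmetic difference is that you split into four cases, whereas the paper computes once with a general homogeneous element $a+bx$ of degree $i$ (so $b\in A_{i-1}$).
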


\begin{proof}
(1) For $a+bx, a'+b'x\in A[x;\sigma]/(x^2)$, we have
\[
\widehat{\sigma}((a+bx)(a'+b'x))
=\si(aa')+\si(ab'+b\si(a'))x
=\widehat{\sigma}(a+bx)\widehat{\sigma}(a'+b'x),
\]
so $\widehat{\sigma}$ defines a graded algebra automorphism.

(2) For $a+bx \in (A[x;\sigma]/(x^2))^{\widehat{\sigma}^{-1}}_i$
and $a'+b'x \in (A[x;\sigma]/(x^2))^{\widehat{\sigma}^{-1}}_j$,
the multiplication is given by
\begin{align*}
(a+bx)*(a'+b'x)&=(a+bx)\widehat{\sigma}^{-i}(a'+b'x)\\
&=(a+bx)(\si^{-i}(a')+\si^{-i}(b')x)
=a\si^{-i}(a')+(a\si^{-i}(b')+b\si^{-i+1}(a'))x.
\end{align*}
On the other hand, if $a+bx \in (A^{\sigma^{-1}}[x]/(x^2))_i$
and $a'+b'x \in (A^{\sigma^{-1}}[x]/(x^2))_j$,
then $a\in A^{\sigma^{-1}}_i$ and $b\in A^{\sigma^{-1}}_{i-1}$, so the multiplication is
\begin{align*}
(a+bx)(a'+b'x)&=a*a'+(a*b'+b*a')x=a\si^{-i}(a')+(a\si^{-i}(b')+b\si^{-(i-1)}(a'))x.
\end{align*} 
Thus the two graded algebras have the same multiplication, and hence are isomorphic.
\end{proof}

\begin{lem}\label{lts}
Let $A$ be an AS-Gorenstein algebra and let $\sigma\in\GrAut(A)$. Consider the graded $A$-bimodule
$L := A_\sigma(-1)$ in $\GrMod A^{\en}$.
Then there is an equivalence of categories
$\GrMod(A \ltimes L) \simeq \GrMod(A^{\sigma^{-1}}[x]/(x^2))$, where $\deg x=1$.
In particular, we have
$\qgr (A \ltimes L) \simeq \qgr(A^{\sigma^{-1}}[x]/(x^2))$,
$\CM(A \ltimes L)\simeq \CM(A^{\sigma^{-1}}[x]/(x^2))$, and  $\uCM(A \ltimes L)\simeq \uCM(A^{\sigma^{-1}}[x]/(x^2))$.
\end{lem}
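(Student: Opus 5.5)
By Lemma \ref{lt1} with $s=1$, we identify $A \ltimes L$ with $B := A[x;\sigma]/(x^2)$, where $\deg x = 1$. By Lemma \ref{ltt}(1), $\widehat{\sigma}$ is a graded algebra automorphism of $B$, so Zhang's theorem (Theorem \ref{tZ}) applies to $B$ and $\widehat{\sigma}^{-1}$. It gives an equivalence
\[
(-)^{\widehat{\sigma}^{-1}} : \GrMod B \xrightarrow{\ \sim\ } \GrMod B^{\widehat{\sigma}^{-1}}.
\]
Lemma \ref{ltt}(2) identifies $B^{\widehat{\sigma}^{-1}}$ with $A^{\sigma^{-1}}[x]/(x^2)$. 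Composing these yields the first equivalence $\GrMod(A \ltimes L) \simeq \GrMod(A^{\sigma^{-1}}[x]/(x^2))$.

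For the remaining assertions, we check that the Zhang twist functor $F = (-)^{\widehat{\sigma}^{-1}}$ preserves each relevant subcategory.

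\begin{itemize}
\item \emph{Finitely generated and finite-dimensional modules.} Since $F$ is the identity on underlying graded vector spaces, it preserves finite-dimensional modules. Being an equivalence of abelian categories, it preserves finitely generated objects (compact objects, or equivalently quotients of finite sums of the $B(i)$, using $F(B(i)) \cong B^{\widehat{\sigma}^{-1}}(i)$). Hence $F$ restricts to $\grmod \simeq \grmod$ and $\fdim \simeq \fdim$, and so induces $\qgr(A \ltimes L) \simeq \qgr(A^{\sigma^{-1}}[x]/(x^2))$. Noetherianity of both sides holds: $A \ltimes L$ is a finite module over the noetherian $A$, and similarly for the twisted algebra, with Proposition \ref{poz}(3) covering $A^{\sigma^{-1}}$. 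In any case, the Serre quotient only needs the subcategories to correspond.
\item \emph{Maximal Cohen-Macaulay modules.} Both algebras are AS-Gorenstein of the same dimension $d$. For $B$ this is Lemma \ref{ltg}; for the twist it follows either from Proposition \ref{poz}(3) applied to $B$, or from Lemma \ref{ltg} applied to $A^{\sigma^{-1}}$ with the identity automorphism. The functor $F$ is exact and sends $B(i)$ to $B^{\widehat{\sigma}^{-1}}(i)$, so it sends graded projective resolutions to graded projective resolutions. It therefore identifies
\[
\bigoplus_s \Ext^i_{\GrMod B}\bigl(M, B(s)\bigr) \cong \bigoplus_s \Ext^i_{\GrMod B^{\widehat{\sigma}^{-1}}}\bigl(F M, B^{\widehat{\sigma}^{-1}}(s)\bigr),
\]
which is the dimension count already used in the proof of Proposition \ref{poz}(3). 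Hence $M$ is maximal Cohen-Macaulay if and only if $FM$ is, and $F$ restricts to $\CM(A \ltimes L) \simeq \CM(A^{\sigma^{-1}}[x]/(x^2))$.
\item \emph{Stable categories.} The restricted $F$ preserves graded projectives, hence the ideal of morphisms factoring through projectives, and it preserves exact sequences. It is therefore an exact equivalence of Frobenius categories, and so induces a triangle equivalence $\uCM(A \ltimes L) \simeq \uCM(A^{\sigma^{-1}}[x]/(x^2))$.
\end{itemize}

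The only delicate point is the $\Ext$ comparison in the second item. It rests on the fact that $F$ commutes with the shift functors and maps each $B(s)$ to the corresponding free module $B^{\widehat{\sigma}^{-1}}(s)$; granting this, the remaining steps are formal.
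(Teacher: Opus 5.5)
Your proposal is correct and follows the paper's proof. Like the paper, you identify $A\ltimes L$ with $A[x;\sigma]/(x^2)$ via Lemma~\ref{lt1}, recognize $A^{\sigma^{-1}}[x]/(x^2)$ as its Zhang twist by $\widehat{\sigma}^{-1}$ via Lemma~\ref{ltt}, apply Theorem~\ref{tZ}, and observe that the resulting equivalence restricts to $\qgr$, $\CM$ and $\uCM$; your bullets just spell out this last step, which the paper states in one line.

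One minor correction to your closing remark: the Zhang twist functor does \emph{not} commute with the shift functors in general. Only $F(B(s))\cong B^{\widehat{\sigma}^{-1}}(s)$ holds, which is what Theorem~\ref{tZ} gives, and your $\Ext$ comparison uses only this fact, so the argument is unaffected.
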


\begin{proof}
By Lemmas \ref{lt1}, \ref{ltg},
$A \ltimes L \cong A[x;\sigma]/(x^2)$ with $\deg x=1$ is an AS-Gorenstein algebra.
By Lemma \ref{ltt}(2), $A^{\sigma^{-1}}[x]/(x^2)$ is isomorphic to a Zhang twist of $A[x;\sigma]/(x^2)$.
Hence Proposition \ref{poz}(3) implies that $A^{\sigma^{-1}}[x]/(x^2)$ is AS-Gorenstein.
Moreover, Theorem \ref{tZ} yields an equivalence $\GrMod(A \ltimes L) \xrightarrow{\sim} \GrMod(A^{\sigma^{-1}}[x]/(x^2))$, which induces equivalences on $\qgr$, $\CM$, and $\uCM$.
\end{proof}

\section{Main results}

In this section, we prove Theorems \ref{tmain1} and \ref{tmain2}, and derive some corollaries.

\subsection{Proof of Theorem \ref{tmain1}}

We first recall basic results on graded localization.

\begin{lem}\label{lloc}
\begin{enumerate}
\item Let $A$ be a noetherian $\NN$-graded algebra
generated by $A_1$ over $A_0$.
Let $g \in A$ be a homogeneous regular normal element of positive degree. Assume that $\dim_k A/(g)<\infty$.
Then $\grmod A \to \mod A[g^{-1}]_0; M \to M[g^{-1}]_0$ induces an equivalence $\qgr A \to \mod A[g^{-1}]_0$.
\item Let $A$ be an $\ZZ$-graded algebra and let $g\in A$ be a homogeneous regular normal element.
For a positive integer $m$, we have $A[(g^m)^{-1}]=A[g^{-1}]$ as graded algebras. In particular, $A[(g^m)^{-1}]_0=A[g^{-1}]_0$.
\end{enumerate}
\end{lem}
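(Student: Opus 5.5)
For (2), I would argue directly in the graded localization. Since $g$ is regular and normal in $A$, the set $\{g^n\}$ is an Ore set, and so is $\{g^{mn}\}$. Both localizations exist as graded rings. The inclusion of multiplicative sets gives a canonical map $A[(g^m)^{-1}]\to A[g^{-1}]$, which is injective because everything embeds in $A[g^{-1}]$. It is surjective because $g^{-1}=g^{m-1}(g^m)^{-1}$ already lies in the image. Since the map is degree-preserving, restricting to degree $0$ gives the second claim.

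For (1), I would follow the standard argument (Artin–Zhang, or Smith's treatment of $\qgr$ via localization at a normal element).

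\textbf{Exactness and kernel.} The functor $M\mapsto M[g^{-1}]_0=(M\otimes_A A[g^{-1}])_0$ is exact, since Ore localization is flat and taking a graded piece is exact. Suppose $M$ is finite-dimensional. Then every element of $M$ is killed by $A_{\ge n}$ for large $n$, hence by $g^n$, so $M[g^{-1}]=0$. Therefore the functor factors through $\qgr A$. Moreover $M[g^{-1}]_0$ is finitely generated over $A[g^{-1}]_0$. To see this, first note that $M[g^{-1}]$ is generated over $A[g^{-1}]$ by finitely many homogeneous generators, and each can be rescaled by powers of $g$ into degree $0$. The key point is that $A[g^{-1}]$ is strongly graded, i.e.\ $A[g^{-1}]_i=A[g^{-1}]_0\cdot g^{i/\deg g}$ when the degree allows. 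Here I use that $A/(g)$ is finite-dimensional: then $A_{\ge N}\subset gA$ for large $N$, so any homogeneous element of large degree is divisible by $g$. Hence $A[g^{-1}]_i\cdot A[g^{-1}]_{-i}=A[g^{-1}]_0$ for all $i$.

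\textbf{Strong grading and Dade's theorem.} When $\deg g>1$, some degrees are not multiples of $\deg g$, so an extra argument is needed. I would use generation by $A_1$ to show that $A[g^{-1}]_1A[g^{-1}]_{-1}=A[g^{-1}]_0$. Write $1=g^N g^{-N}$, express $g^N\in A_{\ge 1}$ as $\sum x_i a_i$ with $x_i\in A_1$, and then $1=\sum x_i\,(a_i g^{-N})$. With strong grading in hand, Dade's theorem gives $\GrMod A[g^{-1}]\simeq\Mod A[g^{-1}]_0$.

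\textbf{Full faithfulness and essential surjectivity.} It remains to compare $\qgr A$ with $\grmod A[g^{-1}]$.

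Full faithfulness: $\Hom_{\qgr}(\pi M,\pi N)=\varinjlim\Hom(M_{\ge n},N/\text{torsion})$. Any map $M[g^{-1}]\to N[g^{-1}]$ restricts, after multiplying by a suitable power of $g$, to a map $M_{\ge n}\to N/\Gamma_{\fm}N$; this uses noetherianity and the finite generation of $M$. Conversely, a map whose localization is zero has image killed by a power of $g$, hence by $A_{\ge n}$ (using $\dim A/(g)<\infty$), so the image is torsion.

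Essential surjectivity: a finitely generated $A[g^{-1}]_0$-module $V$ is the degree-$0$ part of the graded module $V\otimes A[g^{-1}]$. Choose a finitely generated graded $A$-submodule $M$ containing generators of this module; then $M[g^{-1}]\cong V\otimes A[g^{-1}]$.

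The main obstacle is the identification of torsion: proving that $g$-torsion coincides with $\fm$-torsion for finitely generated modules. This reduces to showing $A_{\ge N}\subset gA$ and, conversely, that $g\in\fm$, together with the strong-grading step when $\deg g>1$.
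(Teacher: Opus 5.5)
Your proposal is correct and is essentially the standard argument the paper invokes by referring to \cite[Proposition 4.6]{MUk}. Since $A/(g^s)$ is finite-dimensional, $g$-torsion coincides with finite-dimensional torsion, so localization identifies $\qgr A$ with $\grmod A[g^{-1}]$. Generation by $A_1$ then makes $A[g^{-1}]$ strongly graded, so Dade's theorem gives $\grmod A[g^{-1}]\simeq\mod A[g^{-1}]_0$, and part (2) is the routine check you give.

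One small correction concerns where strong grading comes from. It comes from generation in degree $1$ together with $\deg g\ge 1$, not from the inclusion $A_{\ge N}\subset gA$. That inclusion alone does not give $1\in A[g^{-1}]_iA[g^{-1}]_{-i}$ for $i\notin(\deg g)\ZZ$; for instance, take $k[t]$ with $\deg t=2$. The companion identity $1\in A[g^{-1}]_{-1}A[g^{-1}]_1$ follows symmetrically by writing $g=\sum a_ix_i$ with $x_i\in A_1$.
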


\begin{proof}
(1) The proof is essentially the same as that of \cite[Proposition 4.6]{MUk}.

(2) The proof is straightforward.
\end{proof}

Moreover, we need the following result.

\begin{lem}\label{lqp}
Let $A$ be an AS-Gorenstein algebra of dimension $1$ generated by $A_1$ over $A_0$. Then $\thick(A(s)) =\thick(A)$ in $\D^b(\qgr A)$ for all $s \in \ZZ$.
\end{lem}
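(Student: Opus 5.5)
It suffices to show $A(s)\in\thick(A)$ and $A\in\thick(A(s))$ in $\D^b(\qgr A)$ for every $s$. Applying the autoequivalence $(-)(s)$ turns $A(s)\in \thick(A)$ into $A(2s)\in\thick(A(s))$, and so on. It is therefore enough to prove that $A(1)\in\thick(A)$ and $A(-1)\in\thick(A)$; an induction on $|s|$ then gives $\thick(A(s))=\thick(A)$ for all $s$.

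The key input is that, since $A$ is AS-Gorenstein of dimension $1$, $A$ has depth one. Concretely, $\H^0_{\fm}(A)=0$, so no nonzero finite-dimensional submodule sits inside $A$. I would use this to produce, for each vertex idempotent $e\in A_0$, a short exact sequence in $\grmod A$ of the form
\[
0\to eA(-1)\to \textstyle\bigoplus eA\ldots \to M\to 0
\]
in which the cokernel, or kernel, is finite-dimensional. One candidate is $\phi: A(-1)^{\oplus r}\to A$ given by left multiplication by a basis of $A_1$; the cokernel of $\phi$ is $A_0$, which is finite-dimensional because $A$ is generated by $A_1$. Passing to $\qgr A$ kills $A_0$, so $\pi\phi$ is an epimorphism and $\pi A$ lies in the extension closure of copies of $\pi A(-1)$ together with $\pi\ker\phi$. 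This alone does not close the argument, so a cleaner route is needed.

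The cleaner route is to use a regular normal element. Choose a homogeneous element $g$ with $\dim_k A/(g)<\infty$ that is regular and normal. Its existence in dimension $1$ is the main obstacle. I would try to extract it from the structure of $\H^1_{\fm}(A)$, or, in the situation where the lemma is applied ($A=S^{!}$-type algebras, or $A^{\sigma^{-1}}[x]/(x^2)$ localized), take $g$ directly; Lemma \ref{lloc} suggests the paper has such a $g$ available. Given $g$ of degree $m$, normality yields the exact sequence
\[
0\to A(-m)\xrightarrow{g\cdot} A\to A/gA\to 0 ,
\]
with $A/gA\in\fdim A$ (here $\deg g=m\ge1$, and one uses left multiplication together with the twist, since $gA=Ag$). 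In $\qgr A$ this gives $\pi A(-m)\cong\pi A$, and hence $\pi A(s)\cong \pi A(s+m)$ for all $s$. It therefore only remains to treat $0<s<m$. For these I would use $\qgr A\simeq \mod A[g^{-1}]_0$ from Lemma \ref{lloc}(1). Under this equivalence $\pi A(s)$ corresponds to $A[g^{-1}]_s$, which is a finitely generated projective $A[g^{-1}]_0$-module, being a direct summand of $A[g^{-1}]$ regarded as a module over its degree-zero part. Because $A$ is generated in degree $1$, the module $A[g^{-1}]_s$ is a generator, and $A[g^{-1}]_0$ is a summand of $(A[g^{-1}]_s)^{\oplus n}$ via the multiplication maps $A_{m-s}\otimes A[g^{-1}]_s\to A[g^{-1}]_m\cong A[g^{-1}]_0$. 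Conversely, $A[g^{-1}]_s$ is projective and so lies in $\thick(A[g^{-1}]_0)$. Both inclusions hold in $\D^b(\mod A[g^{-1}]_0)$, which proves the lemma.
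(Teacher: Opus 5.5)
\textbf{There is a genuine gap: the regular normal element $g$ with $\dim_k A/(g)<\infty$ is never constructed.} The lemma is stated for an arbitrary AS-Gorenstein algebra of dimension $1$ generated in degree $1$, and nothing in these hypotheses supplies such an element. Lemma \ref{lloc}(1) takes $g$ as a hypothesis; it does not produce one. Falling back on ``the situation where the lemma is applied'' proves only a special case. That case would be enough for Theorem \ref{tNSV}, where $g\in A^!$ is given, but not for the lemma as stated. Once $g$ is available, your localization step is fine: $g\in A_1^m$ is a unit in $A[g^{-1}]$, so $A[g^{-1}]$ is strongly graded and every $A[g^{-1}]_s$ is a progenerator over $A[g^{-1}]_0$. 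But the existence step, which you yourself flag as the main obstacle, is left open.

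The route you abandoned is the one that works, and the missing idea is projectivity. Since $A$ is AS-Gorenstein of dimension $1$, $\pi A$ is a projective object of $\qgr A$ (\cite[Proposition 3.5(1)]{IKU}). One way to see this: $\Ext^1_{\qgr A}(\pi A,\pi M)$ is computed by $\H^2_{\fm}(M)_0$, and this vanishes because local cohomology vanishes above degree $1$ by local duality.

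Hence your epimorphism $\pi A(-1)^{\oplus r}\to \pi A$ splits. So $\pi A$ is a direct summand of $\pi A(-1)^{\oplus r}$, which gives $\thick(A)\subset\thick(A(-1))$.

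That is only one inclusion, and your own reduction correctly requires both $A(1)\in\thick(A)$ and $A(-1)\in\thick(A)$. The paper gets the reverse inclusion from the Krull-Schmidt property of $\qgr A$:
\begin{itemize}
\item the set of indecomposable summands of $A$ is contained in that of $A(-1)$;
\item both sets have the same finite cardinality, because $(-1)$ is an autoequivalence;
\item hence the two sets coincide, and iterating gives $\thick(A(s))=\thick(A)$ for all $s$.
\end{itemize}
So this counting step is also needed, not just the splitting.
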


\begin{proof}
Since $\qgr A$ is a Krull-Schmidt category,
for an object $X\in \qgr A$, let $\Ind(X)$ denote the set of isomorphism classes of indecomposable direct summands of $X$ in $\qgr A$.

Since $A$ is generated in degree $1$, there is an exact sequence
$A(-1)^{\oplus n} \to A \to A_0 \to 0$
in $\grmod A$. This induces an exact sequence
\begin{equation}\label{eqgr}
A(-1)^{\oplus n} \to A \to 0
\end{equation}
in $\qgr A$.
Since $A$ is AS-Gorenstein of dimension $1$, it follows from \cite[Proposition 3.5(1)]{IKU} that $A$ is a projective object in $\qgr A$.
Therefore the exact sequence \eqref{eqgr} splits, and hence $A$ is a direct summand of $A(-1)^{\oplus n}$ in $\qgr A$. Thus we have $\Ind(A) \subset \Ind(A(-1))$.
Since the shift functor $(1): \qgr A \to \qgr A$ is an autoequivalence, it follows that $|\Ind(A)| = |\Ind(A(-1))|$, so we obtain $\Ind(A) = \Ind(A(-1))$.
Iterating this argument, we have $\Ind(A) = \Ind(A(s))$ for any $s \in \ZZ$. Hence $\thick(A)=\thick(A(s))$ in $\D^b(\qgr A)$.
\end{proof}

The following theorem plays a central role in proving Theorem \ref{tmain1}.

\begin{thm}\label{tNSV}
Let $S$ be a Koszul AS-regular algebra, and let $f \in S$ be a homogeneous regular normal element of degree $2$. Set $A=S/(f)$, which is a Koszul AS-Gorenstein algebra.
If $g \in A^!$ is a homogeneous regular normal element of degree $2$ such that $A^!/(g) \cong S^!$, then there is an equivalence of triangulated categories
\[
\uCM(A) \simeq \D^b(\mod (A^![g^{-1}]_0)^{\op}).
\]
Moreover, this equivalence restricts to a triangle equivalence
\[
\thick(\Omega^d A_0(d)) \xrightarrow{\sim}
\K^b(\proj (A^![g^{-1}]_0)^{\op}).
\]
\end{thm}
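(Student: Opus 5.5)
The plan is to chain the BGG-type duality of Theorem \ref{t.qcd} with the graded localization of Lemma \ref{lloc}, and then convert the resulting duality into an equivalence by taking $k$-duals. There are three ingredients to check, in order.

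\textbf{Step 1 (hypotheses).} First I confirm that Theorem \ref{t.qcd} applies to $A$. By Proposition \ref{pLe}, $A=S/(f)$ is AS-Gorenstein of dimension $d-1$, where $d=\gldim S$. It is Koszul because $f$ is a regular normal element of degree $2$ in a Koszul algebra; this is the standard fact already used in the statement.

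Next I need $A^!$ to be AS-Gorenstein. By Proposition \ref{pfds}, $S^!$ is finite-dimensional and AS-Gorenstein of dimension $0$. Since $g$ is a regular normal element of positive degree with $A^!/(g)\cong S^!$, Proposition \ref{pLe} shows that $A^!$ is AS-Gorenstein of dimension $1$. It is noetherian, being finite over $k[g]$ up to normality.

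Theorem \ref{t.qcd} then gives a duality $\mathfrak{E}\colon\uCM(A)\to\D^b(\qgr A^!)$. By Lemma \ref{thickg} (with the dimension of $A$ in place of $d$; the indexing should be matched to the statement), it sends $\Omega^d A_0(d)$ to $A^!(d)$ up to the appropriate shift.

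\textbf{Step 2 (localization).} The algebra $A^!$ is Koszul, so it is generated by $A^!_1$ over $A^!_0$. Also $\dim_k A^!/(g)=\dim_k S^!<\infty$. Hence Lemma \ref{lloc}(1) gives an equivalence $\qgr A^!\simeq\mod A^![g^{-1}]_0$. Under it, $A^!(s)$ goes to $A^![g^{-1}]_s$, which is a finitely generated projective $A^![g^{-1}]_0$-module; for $s$ even it is isomorphic to $A^![g^{-1}]_0$ itself via multiplication by a power of $g$.

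\textbf{Step 3 (turning the duality into an equivalence).} Since $B:=A^![g^{-1}]_0$ is finite-dimensional (it is a quotient-like localization of a finite module over $k[g]$), the functor $D=\Hom_k(-,k)$ gives a duality $\mod B\simeq\mod B^{\op}$ and hence $\D^b(\mod B)\simeq\D^b(\mod B^{\op})^{\op}$. Composing the three steps yields a triangle equivalence
\[
\uCM(A)\simeq\D^b(\mod B^{\op}),
\]
where one checks that a duality followed by $D$ is an exact covariant equivalence.

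For the restriction statement, $\mathfrak{E}$ maps $\thick(\Omega^d A_0(d))$ onto $\thick(A^!(d))$. By Lemma \ref{lqp}, which applies because $A^!$ has dimension $1$ and is generated in degree $1$, this equals $\thick(A^!)$ in $\D^b(\qgr A^!)$. That subcategory corresponds to $\thick(B)=\K^b(\proj B)$, since $B$ is the image of $A^!$ and $\thick(B)$ consists of perfect complexes. Applying $D$ sends $\K^b(\proj B)$ to $\K^b(\operatorname{inj} B^{\op})$, not to $\K^b(\proj B^{\op})$.

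\textbf{Main obstacle.} This mismatch between injectives and projectives is the step I expect to be hardest. My first attempt is to avoid $D$ altogether and use $\RHom_B(-,B)$ instead. This gives a duality $\D^b(\mod B)\to\D^b(\mod B^{\op})$ provided $B$ is Gorenstein, i.e.\ $\injdim B<\infty$ on both sides, and it sends $\K^b(\proj B)$ to $\K^b(\proj B^{\op})$. The required finiteness of $\injdim B$ should follow from $A^!$ being AS-Gorenstein of dimension $1$: graded localization at a regular normal element $g$ preserves the relevant injective dimension bounds.

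Alternatively, if $B$ turns out to be self-injective, then $\operatorname{inj}=\proj$ and the plain duality $D$ already works. This happens when $B\cong A^!(1)\otimes$ something symmetric, as in the exterior algebra example in the introduction. In that case $\K^b(\operatorname{inj} B^{\op})=\K^b(\proj B^{\op})$ and Step 3 goes through unchanged.
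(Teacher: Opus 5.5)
Your construction of $\uCM(A)\simeq\D^b(\mod B^{\op})$ with $B=A^![g^{-1}]_0$ is the paper's: the duality $\mathfrak{E}$ of Theorem~\ref{t.qcd}, then Lemma~\ref{lloc}(1), then the $k$-dual $D$. Your reduction of the second statement is also the paper's: via Lemmas~\ref{thickg} and \ref{lqp} it comes down to showing that $D$ carries $\thick(B_B)=\K^b(\proj B)$ onto $\K^b(\proj B^{\op})$.

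That last step is exactly what you leave open, so the restriction $\thick(\Omega^dA_0(d))\simeq\K^b(\proj B^{\op})$ is not proved. Neither fallback closes it.
\begin{itemize}
\item The $\RHom_B(-,B)$ route needs $B$ to be Iwanaga--Gorenstein. You only assert this (``localization should preserve injective dimension bounds''). Moreover, once $B$ is Iwanaga--Gorenstein you already have $\K^b(\operatorname{inj}B^{\op})=\K^b(\proj B^{\op})$, so changing the duality gains nothing.
\item The second fallback is conditional on $B$ being self-injective. The heuristic offered for it (``$B\cong A^!(1)\otimes$ something symmetric'') is not an argument.
\end{itemize}

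The missing ingredient is that $B$ is self-injective. This is what the paper uses, and then $\operatorname{add}(DB)=\operatorname{add}({}_BB)$ gives $D(\K^b(\proj B))=\K^b(\proj B^{\op})$. The paper cites Proposition~\ref{pfds} for it, which applies directly in the case needed for Theorem~\ref{tmain1a}, where $B\cong(S^{\sigma^{-1}})^!$ by Lemma~\ref{lCa}(5).

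In the general setting you can prove it from facts you already have. Let $\pi\colon\grmod A^!\to\qgr A^!$ be the quotient functor.
\begin{itemize}
\item $A^!$ is AS-Gorenstein of dimension $1$, and local cohomology of finitely generated modules vanishes above degree $1$ by local duality. Hence $N\in\grmod A^!$ is maximal Cohen--Macaulay as soon as $\Gamma_{\fm}(N)=0$.
\item For $M\in\grmod A^!$, the torsion submodule $\Gamma_{\fm}(M)$ is finite-dimensional. So $M_{\ge n}$ is maximal Cohen--Macaulay for $n\gg0$.
\item Therefore $\Ext^1_{\qgr A^!}(\pi M,\pi A^!)\cong\varinjlim_n\Ext^1_{A^!}(M_{\ge n},A^!)_0=0$, so $\pi A^!$ is injective (as well as projective) in $\qgr A^!$.
\item Its image $B_B$ under $\qgr A^!\simeq\mod B$ is then injective. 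Since $B$ is finite-dimensional, it is self-injective on both sides, which is precisely what your Step~3 needs.
\end{itemize}
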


\begin{proof}
Set $\Lambda = A^![g^{-1}]_0$.
Since $S$ is Koszul AS-regular, $A^!/(g) \cong S^!$ is finite-dimensional by Proposition \ref{pfds}.
Therefore, Proposition \ref{lloc}(1) yields an equivalence
$\qgr A^! \to \mod \La$, and hence induces a triangle equivalence
\[ L:\D^b(\qgr A^!) \to \D^b(\mod \La). \]
Moreover, $A^!/(g) \cong S^!$ is an AS-Gorenstein algebra of dimension $0$ by Proposition \ref{pfds}, so $A^!$ is a Koszul AS-Gorenstein algebra of dimension $1$ by Proposition \ref{pLe}. 
Thus, Theorem \ref{t.qcd} provides a duality
\[ \mathfrak{E}:\uCM(A) \to \D^b(\qgr A^!). \]
Consequently, we obtain an equivalence of triangulated categories
\begin{equation*}
\uCM(A)
\xrightarrow[\mathfrak{E}]{\text{duality}}
\D^b(\qgr A^!)
\xrightarrow[L]{\sim}
\D^b(\mod  \La)
\xrightarrow[D]{\text{duality}}
\D^b(\mod \La^{\op}),
\end{equation*}
where $D$ denotes the $k$-dual.
This proves the first statement.

By Lemma \ref{thickg}, we have $\mathfrak{E}(\Omega^d A_0(d)) \cong A^!(d)$. By Lemma \ref{lloc}(1), we have $L(A^!) \cong \La$ in $\D^b(\mod \La)$.
Since $\La$ is self-injective by Proposition \ref{pfds}, it follows that $D\La \cong \La$ in $\mod \La^{\op}$.
Therefore, we obtain
\[
\thick(\Omega^d A_0(d))
\xrightarrow[\mathfrak{E}]{}
\thick(A^!(d)) \overset{\text{Lem.\,\ref{lqp}}}{=}
\thick(A^!) 
\xrightarrow[L]{\sim}
\thick(\La) 
\xrightarrow[D]{}
\thick(_{\La}\La)=\K^b(\proj \La^{\op}).
\]
This completes the proof.
\end{proof}

For a $\ZZ$-graded algebra $A$, we define a graded algebra automorphism $-1 \in \GrAut(A)$ by $a \mapsto (-1)^{\deg a} a$
for all homogeneous elements $a \in A$.
For example, $k[y,z][x;-1] \cong  k\ang{y, z, x}/(yz-zy,\,xy+yx,\,xz+zx)$, where $\deg x=\deg y=\deg z=1$.
The following lemma provides a key computational step in the proof of Theorem \ref{tmain1}.

\begin{lem}\label{lCa}
Let $S$ be a Koszul AS-regular algebra. 
\begin{enumerate}
\item $S[x]$ with $\deg x=1$ is a Koszul AS-regular algebra. 
\item $A=S[x]/(x^2)$ with $\deg x=1$ is a Koszul AS-Gorenstein algebra.
\item
There are isomorphisms of graded algebras $A^! \cong S^![z;-1]$ and $S[x]^! \cong S^![z;-1]/(z^2)$.
\item
Let $g=z^2 \in S^![z;-1]_2 \cong  A^!_2$.
Then $g$ is a regular central element of $A^!$ and $A^!/(g)\cong S[x]^!$.
\item $A^![g^{-1}]_0$ is isomorphic to $S^!$ as (ungraded) algebras.
\end{enumerate}
\end{lem}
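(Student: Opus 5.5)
I will prove the five items in order, mostly by explicit presentations and the structural results of Section 2.

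For (1), $S[x]=S[x;\id]$ with $\deg x=1$, so Proposition~\ref{poz}(1),(2) gives at once that it is AS-regular of dimension $d+1$ and Koszul. For (2), $x^2$ is a central regular element of degree $2$ in $S[x]$, so $A=S[x]/(x^2)$ is AS-Gorenstein of dimension $d$ by Proposition~\ref{pLe}, or by Lemma~\ref{ltg} with $\sigma=\id$. For Koszulity I would use the standard fact that a quotient of a Koszul algebra by a regular normal quadratic element is Koszul, or argue directly: $A\cong S\otimes k[x]/(x^2)$ is a tensor product of Koszul algebras, hence Koszul, with $S_0$ semisimple and $k[x]/(x^2)$ connected.

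For (3), write $S=T_{S_0}(V)/(R)$. Then $S[x]$ is generated by $V\oplus kx$ with relations $R\oplus\{vx-xv\}$, and $A$ has in addition the relation $x^2$. I compute annihilators in $(V\oplus kz)^{\otimes 2}$ with $z$ dual to $x$. The complement of $R\oplus\operatorname{span}\{vx-xv,\,x^2\}$ is spanned by $R^\perp$ together with the elements $v^*z+zv^*$, so $A^!=T(V^*\oplus kz)/(R^\perp,\ zv^*+v^*z)$. Since $v^*$ has degree $1$, this is $S^![z;-1]$. For $S[x]^!$ one relation fewer is imposed on the $S[x]$ side, so $z^2$ is added to the dual relations, giving $S^![z;-1]/(z^2)$. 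The care needed here is with the $S_0$-bimodule structure: $x$ commutes with the idempotents, so $kx$ is replaced by $S_0$ as a bimodule, and similarly for $z$. I expect this pairing bookkeeping, done over $S_0$ rather than $k$, to be the main technical point.

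For (4), $z^2$ commutes with $z$. For $a\in S^!$ homogeneous we have $za=(-1)^{\deg a}az$, so $z^2a=az^2$; hence $g$ is central. Regularity holds because $S^![z;-1]$ is free over $S^!$ on $\{z^i\}$ and multiplication by $z^2$ shifts this basis injectively. Then $A^!/(g)\cong S^![z;-1]/(z^2)\cong S[x]^!$ by (3).

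For (5), every degree-$0$ element of $A^![g^{-1}]=S^![z^{\pm1};-1]$ has the form $\sum_i a_i z^{-i}$ with $a_i\in S^!_i$, a finite sum since $S^!$ is finite-dimensional by Proposition~\ref{pfds}. I define $\phi\colon S^!\to A^![g^{-1}]_0$ by $a\mapsto az^{-\deg a}$ on homogeneous elements, which is a linear bijection. For multiplicativity, take $a\in S^!_i$ and $b\in S^!_j$. Then
\[
az^{-i}bz^{-j}=(-1)^{ij}\,ab\,z^{-i-j},
\]
so $\phi$ becomes multiplicative only after twisting. Since $(-1)^{ij}$ is a symmetric $2$-cocycle, I would fix it by the modified map $a\mapsto \epsilon(i)\,az^{-i}$ with $\epsilon(i)=(-1)^{i(i-1)/2}$. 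This works because $\epsilon(i+j)=\epsilon(i)\epsilon(j)(-1)^{ij}$, which yields an algebra isomorphism $S^!\cong A^![g^{-1}]_0$ and proves (5).
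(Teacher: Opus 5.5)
Your proposal is correct and follows the paper's proof essentially step by step. The matching steps are: Proposition~\ref{poz} for (1); Proposition~\ref{pLe} for the Gorenstein part of (2); the presentation and annihilator computation over $S_0$ for (3); centrality and regularity of $z^2$ for (4); and in (5) the same sign-corrected map $a\mapsto(-1)^{\deg a(\deg a-1)/2}\,a z^{-\deg a}$ into $S^![z^{\pm1};-1]_0$. There the paper justifies $A^![(z^2)^{-1}]=A^![z^{-1}]$ by Lemma~\ref{lloc}(2), which you use without comment.

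The only real deviation is the Koszulity of $A$ in (2). Besides the quadratic-regular-normal-quotient fact, which the paper takes from Shelton--Tingey \cite{ST}, you give the tensor-product argument $A\cong S\otimes k[x]/(x^2)$. That argument is valid, since the tensor product of the two linear resolutions is a linear resolution of $A_0=S_0$, and it is more self-contained.
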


\begin{proof}
(1) This follows from Proposition \ref{poz}(1), (2).

(2) By Proposition \ref{pLe}, $A$ is an AS-Gorenstein algebra. The same argument as in \cite[Theorem 1.2]{ST} implies that $A$ is Koszul.

(3) Set $S=T_{S_0}(S_1)/(R_S)$. Then $A \cong T_{S_0}(S_1\oplus V)/(R_S,\ S_1\otimes_{S_0} V-V\otimes_{S_0} S_1,\ V\otimes_{S_0} V)$, where $V=S_0x$. Therefore $A^!\cong T_{S_0}(S_1^*\oplus V^*)/(R_S^\perp,\ S_1^*\otimes_{S_0} V^*+V^*\otimes_{S_0} S_1^*) \cong S^![z;-1]$. Similarly, one obtains an isomorphism $S[x]^! \cong S^![z;-1]/(z^2)$.

(4) This is an immediate consequence of (3).

(5) By (3), (4), and Lemma \ref{lloc}(2), we obtain
$A^![g^{-1}]_0 \cong (S^![z;-1])[(z^2)^{-1}]_0
\cong (S^![z;-1])[z^{-1}]_0$.
Define a map $\Psi \colon S^! \to (S^![z;-1])[z^{-1}]_0$ by
\[
\Psi(a)=(-1)^{\frac{\deg a(\deg a-1)}{2}}a z^{-\deg a}
\quad \text{for all homogeneous } a \in S^!.
\]
Then,  for $a \in S^!_p, b \in S^!_q$, we have
\begin{align*}
\Psi(a)\Psi(b)&= (-1)^{\frac{p(p-1)}{2}}(-1)^{\frac{q(q-1)}{2}}a z^{-p}bz^{-q}
=(-1)^{\frac{p(p-1)}{2} + \frac{q(q-1)}{2}}(-1)^{pq}abz^{-(p+q)}\\
&=(-1)^{\frac{(p+q)(p+q-1)}{2}}abz^{-(p+q)}=\Psi(ab),
\end{align*}
so $\Psi$ is an algebra homomorphism. It is clear that $\Psi$ is bijective.
Therefore $\Psi$ is an algebra isomorphism, and the result follows.
\end{proof}

We can now prove Theorem \ref{tmain1}.

\begin{thm}[Theorem \ref{tmain1}]\label{tmain1a}
Let $S$ be a Koszul AS-regular algebra and let $\sigma\in\GrAut(S)$. Consider the graded $S$-bimodule
$L := S_\sigma(-1)$ in $\GrMod S^{\en}$.
Then there are equivalences of triangulated categories
\begin{equation*}
\xymatrix@R=0.5cm@C=1.25cm{
\uCM(S \ltimes L)\ar[r]^-{\sim} 
& \D^b(\mod ((S^{\sigma^{-1}})^!)^{\op})\\
\thick(\Omega^d (S \ltimes L)_0(d)) \ar[r]^-{\sim} \ar@{}[u]|{\rotatebox{90}{$\subset$}}
& \K^b(\proj ((S^{\sigma^{-1}})^!)^{\op}). \ar@<2ex>@{}[u]|{\rotatebox{90}{$\subset$}}
}
\end{equation*}
In particular, $\thick(\Omega^d (S \ltimes L)_0(d))$ admits a Serre functor and Auslander-Reiten triangles.
\end{thm}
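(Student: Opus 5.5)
The plan is to reduce to the case $\sigma=\id$ via Zhang twisting and then apply Theorem \ref{tNSV} with the data of Lemma \ref{lCa}. Put $S':=S^{\sigma^{-1}}$. By Proposition \ref{poz}(3),(4), $S'$ is again a Koszul AS-regular algebra of dimension $d$. Lemma \ref{lts} gives an equivalence $\GrMod(S\ltimes L)\simeq \GrMod(S'[x]/(x^2))$ with $\deg x=1$. This equivalence is the Zhang twist functor of Theorem \ref{tZ}, so it is exact and commutes with the shift $(1)$. Hence it induces a triangle equivalence $\uCM(S\ltimes L)\simeq\uCM(A)$, where $A:=S'[x]/(x^2)$.

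We must also track the generator. Since the twist functor sends $eB(i)$ to $eB^{\widehat\sigma^{-1}}(i)$, it preserves graded projectives and commutes with syzygies. It also sends the degree-zero part $(S\ltimes L)_0=S_0$, regarded as the module $B/B_{\ge1}$, to $A_0$; this holds because the twist functor preserves the underlying graded vector space and submodule structure. Therefore $\Omega^d(S\ltimes L)_0(d)$ goes to $\Omega^d A_0(d)$, and $\thick(\Omega^d(S\ltimes L)_0(d))$ corresponds to $\thick(\Omega^dA_0(d))$.

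Now apply Lemma \ref{lCa} to $S'$. By parts (1) and (2), $S'[x]$ is Koszul AS-regular and $A=S'[x]/(x^2)$ is Koszul AS-Gorenstein. Here $f=x^2$ is a central regular element of degree $2$. By part (4), $g=z^2\in A^!$ is a regular central element of degree $2$ with $A^!/(g)\cong S'[x]^!$. These are exactly the hypotheses of Theorem \ref{tNSV}, with $S'[x]$ in the role of $S$. That theorem gives triangle equivalences
\[
\uCM(A)\simeq \D^b(\mod(A^![g^{-1}]_0)^{\op}),\qquad \thick(\Omega^{d}A_0(d))\simeq \K^b(\proj (A^![g^{-1}]_0)^{\op}).
\]

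One check is needed on the index in $\Omega^d$. In Theorem \ref{tNSV}, $d$ should be the dimension of $A$. Since $S'[x]$ has dimension $d+1$, $A$ has dimension $d$, which matches the statement. Finally, Lemma \ref{lCa}(5) gives $A^![g^{-1}]_0\cong S'^!=(S^{\sigma^{-1}})^!$ as ungraded algebras. Composing this with the equivalences above yields both displayed equivalences.

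For the last assertion, recall from Proposition \ref{pfds} that $\Lambda:=(S^{\sigma^{-1}})^!$ is finite-dimensional and self-injective. For a finite-dimensional algebra, $\K^b(\proj\Lambda^{\op})$ has the Serre functor $-\otimes^{\operatorname L}D\Lambda$; this is Happel's theorem, whose hypothesis is only that the algebra is finite-dimensional. A Serre functor in turn yields Auslander–Reiten triangles by the Reiten–Van den Bergh theorem. Transporting this structure along the equivalence gives the claim for $\thick(\Omega^d(S\ltimes L)_0(d))$.

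The main obstacle I expect is the bookkeeping in the twist step: checking that the Zhang twist equivalence really is a triangle equivalence on stable categories and matches $\Omega^d$ of the degree-zero part on both sides. Everything else is direct citation of earlier results.
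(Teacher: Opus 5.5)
Your proposal is correct and is the paper's own proof. Both transport along the Zhang-twist equivalence of Lemma~\ref{lts}, apply Theorem~\ref{tNSV} to $S^{\sigma^{-1}}[x]$ with $f=x^2$ and $g=z^2$ from Lemma~\ref{lCa}, identify $A^![g^{-1}]_0\cong(S^{\sigma^{-1}})^!$, and finish with Happel and Reiten--Van den Bergh. Two of your side justifications are false as stated, although the conclusions you draw from them do hold here.

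\textbf{The Zhang twist and $(1)$.} With $B=S\ltimes L$ and $\tau=\widehat{\sigma}^{-1}$, the functor $F=(-)^{\tau}$ does not commute with $(1)$ in general. From the definition, $F(M(1))=F(M')(1)$, where $M'$ is $M$ with its right action twisted by $\tau^{-1}$, and $M'\not\cong M$ in general.

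You do not need this for the triangle equivalence. The suspension on $\uCM$ is $\Omega^{-1}$, not $(1)$, so it suffices that $F$ is exact, sends graded projectives to graded projectives (Theorem~\ref{tZ}), and preserves maximal Cohen--Macaulay modules.

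For the generator, the claim is still true, but for a different reason. Twisting by a graded automorphism fixes $B$ and $B/B_{\ge 1}$ up to isomorphism, hence fixes $\Omega^d B_0$. Therefore $F(\Omega^d B_0(d))\cong F(\Omega^d B_0)(d)\cong \Omega^d A_0(d)$.

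\textbf{Happel's theorem.} It is not valid for arbitrary finite-dimensional algebras. The functor $-\Lotimes_\Lambda D\Lambda$ sends $\K^b(\proj\Lambda)$ into bounded complexes of injectives. Consequently $\K^b(\proj\Lambda)$ has a Serre functor (equivalently, Auslander--Reiten triangles) exactly when $\Lambda$ is Iwanaga--Gorenstein; for example, it fails for $k[x,y]/(x,y)^2$. The hypothesis actually needed, and the one the paper uses, is that $(S^{\sigma^{-1}})^!$ is self-injective. You had already recorded this, so the step goes through once it is cited correctly.
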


\begin{proof}
Set $A =S^{\sigma^{-1}}[x]/(x^2)$ with $\deg x=1$. By Lemma \ref{lts}, 
there is an equivalence
\[
F: \uCM(S \ltimes L)\xrightarrow{\sim} \uCM(A)
\]
such that $F(\Omega^d(S \ltimes L)_0(d)) \cong \Omega^d A_0(d)$.
It follows from Proposition \ref{poz} that $S^{\sigma^{-1}}$ and $S^{\sigma^{-1}}[x]$ are Koszul AS-regular algebras.
Thus, by Lemma \ref{lCa}, there is a regular central element $g \in A^!_2$ such that
$A^!/(g) \cong S^{\sigma^{-1}}[x]^!$ (as graded algebras) and $A^![g^{-1}]_0 \cong (S^{\sigma^{-1}})^!$ (as ungraded algebras).
Hence we obtain the desired equivalences
\begin{equation*}
\xymatrix@R=0.5cm@C=2cm{
\uCM(S \ltimes L)\ar[r]^-{\sim} _-{F}
& \uCM(A) \ar[r]^-{\sim}_-{\text{Thm.\,\ref{tNSV}}}
& \D^b(\mod ((S^{\sigma^{-1}})^!)^{\op})\\
\thick(\Omega^d (S \ltimes L)_0(d)) \ar[r]^-{\sim}_-{F} \ar@{}[u]|{\rotatebox{90}{$\subset$}}
&\thick(\Omega^d A_0(d)) \ar[r]^-{\sim}_-{\text{Thm.\,\ref{tNSV}}}
\ar@{}[u]|{\rotatebox{90}{$\subset$}}
& \K^b(\proj ((S^{\sigma^{-1}})^!)^{\op}). \ar@<2ex>@{}[u]|{\rotatebox{90}{$\subset$}}
}
\end{equation*}

Since $(S^{\sigma^{-1}})^!$ is self-injective,
$\K^b(\proj ((S^{\sigma^{-1}})^!)^{\op})$ has a Serre functor and Auslander-Reiten triangles by \cite[Theorem 3.4]{Hap2} and \cite[Theorem I.2.4]{RV} (see also \cite[Proposition 4.5]{BIY}).
Therefore, the last statement of the theorem follows.
\end{proof}

If $S$ is a connected graded Koszul AS-regular algebra of dimension $d$, then its canonical module is given by $\omega_S \cong S_{\nu}(-d)$,
where $\nu$ is the Nakayama automorphism.
Thus, the following result follows from Theorem \ref{tmain1}.

\begin{cor}\label{cmain1}
Let $S$ be a connected graded Koszul AS-regular algebra of dimension $d$.
Then there are equivalences of triangulated categories
\begin{equation*}
\xymatrix@R=0.5cm@C=1.25cm{
\uCM(S \ltimes \omega_S(d-1))\ar[r]^-{\sim} 
& \D^b(\mod ((S^{\nu^{-1}})^!)^{\op})\\
\thick(\Omega^d k(d)) \ar[r]^-{\sim} \ar@{}[u]|{\rotatebox{90}{$\subset$}}
& \K^b(\proj ((S^{\nu^{-1}})^!)^{\op}). \ar@<2ex>@{}[u]|{\rotatebox{90}{$\subset$}}
}
\end{equation*}
In particular, $\thick(\Omega^d k(d))$ admits a Serre functor and Auslander-Reiten triangles.
\end{cor}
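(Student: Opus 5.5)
The plan is to specialize Theorem \ref{tmain1a} to $\sigma=\nu$ and $L=\omega_S(d-1)$. Since $S$ is a connected graded Koszul AS-regular algebra of dimension $d$, the first step is to pin down the Gorenstein parameter. By Koszulity, the minimal resolution of $k$ is linear, so its $d$-th term is $S(-d)$ (its rank is $1$ by the AS condition). Dualizing gives $\Ext^d_S(k,S)\cong k(d)$, so $\ell=d$. By \cite[Theorem 1.2]{JoL} we then have $\omega_S\cong S_\nu(-d)$ in $\GrMod S^{\en}$, and therefore
\[
\omega_S(d-1)\cong S_\nu(-d)(d-1)=S_\nu(-1).
\]
This is exactly the bimodule $L=S_\sigma(-1)$ of Theorem \ref{tmain1a} with $\sigma=\nu\in\GrAut(S)$.

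Next, an isomorphism of graded bimodules $\omega_S(d-1)\cong S_\nu(-1)$ induces a graded algebra isomorphism $S\ltimes\omega_S(d-1)\cong S\ltimes S_\nu(-1)$, given by $(a,m)\mapsto(a,\phi(m))$. This isomorphism transports $\uCM$ and the modules $\Omega^d(-)_0(d)$. Since $S$ is connected, $(S\ltimes L)_0=S_0\oplus L_0=k\oplus 0=k$, because $L_0=S_{-1}=0$. Hence $\Omega^d(S\ltimes L)_0(d)=\Omega^d k(d)$.

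Applying Theorem \ref{tmain1a} with $\sigma=\nu$ then gives both equivalences, with target $\D^b(\mod((S^{\nu^{-1}})^!)^{\op})$ and its subcategory $\K^b(\proj((S^{\nu^{-1}})^!)^{\op})$. The final assertion about the Serre functor and Auslander-Reiten triangles is likewise the last statement of Theorem \ref{tmain1a}.

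The only non-formal point is the identification $\ell=d$. If it needs more care, I would derive it from Proposition \ref{pfds} and Lemma \ref{l.fg}: the proof of Proposition \ref{pfds} already records that $\Ext^d_S(S_0,S)_m=0$ for $m\neq -d$, which forces $\Ext^d_S(k,S)\cong k(d)$.
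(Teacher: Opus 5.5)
Your proposal is correct and follows the paper's route: use $\omega_S\cong S_\nu(-d)$ to get $\omega_S(d-1)\cong S_\nu(-1)$, then apply Theorem~\ref{tmain1a} with $\sigma=\nu$. Your extra checks fill in details the paper leaves implicit: that the Gorenstein parameter equals $d$ because the resolution of $k$ is linear, that a bimodule isomorphism induces an isomorphism of trivial extensions, and that $(S\ltimes L)_0=k$.
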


\subsection{Proof of Theorem \ref{tmain2}}
Before proving Theorem \ref{tmain2}, we present three lemmas.

\begin{lem}\label{lcolo}
Let $A$ be a finite-dimensional connected graded algebra.
Then the underlying (ungraded) algebra $A$ is a local algebra
with unique maximal ideal $\fm=A_{\geq 1}$.
\end{lem}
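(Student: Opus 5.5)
We prove that $\fm=A_{\geq 1}$ is a two-sided ideal whose complement consists entirely of units; this characterizes a local algebra with unique maximal ideal $\fm$.

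First, $\fm$ is a two-sided ideal, because the grading is $\NN$-indexed, so $A_iA_j\subset A_{i+j}$. It is nilpotent: $A$ is finite-dimensional, so $A_i=0$ for all $i>N$ for some $N$. Hence $\fm^{N+1}\subset A_{\geq N+1}=0$. Moreover $A/\fm\cong A_0=k$ is a field, so $\fm$ is a maximal ideal (as a right, left, or two-sided ideal).

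Second, every element outside $\fm$ is invertible. Such an element can be written $a=\lambda+m$ with $\lambda\in k^\times$ and $m\in\fm$. Then $a=\lambda(1+\lambda^{-1}m)$, and since $\lambda^{-1}m$ is nilpotent, the finite geometric series
\[
\sum_{i=0}^{N}(-\lambda^{-1}m)^i
\]
is a two-sided inverse of $1+\lambda^{-1}m$. Hence $a$ is a unit.

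Finally, uniqueness follows. Any proper one-sided or two-sided ideal contains no units, so by the second step it is contained in $\fm$. Therefore $\fm$ is the unique maximal right ideal, the unique maximal left ideal, and the unique maximal two-sided ideal, and it coincides with the Jacobson radical. This is exactly the statement that the ungraded algebra $A$ is local with maximal ideal $A_{\geq 1}$.

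Every step here is routine. The only point needing care is that the inverse must be a genuine two-sided inverse in the ungraded algebra, and the nilpotence of $\fm$ supplied by finite-dimensionality guarantees this.
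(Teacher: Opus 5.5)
Your proof is correct and takes essentially the same route as the paper: $\fm=A_{\geq 1}$ is nilpotent because $A$ is finite-dimensional, and $A/\fm\cong k$. The only difference is that you check directly, via the finite geometric series, that every element outside $\fm$ is a unit, whereas the paper cites the standard fact that a nilpotent ideal lies in $\rad A$ and then concludes $\fm=\rad A$ from maximality.
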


\begin{proof}
Clearly $\fm$ is an ideal of $A$.
Since $A$ is finite-dimensional over $k$, there exists an integer $N>0$
such that $A_i = 0$ for all $i>N$. Consequently, $\fm^{N+1}
\subset A_{\geq N+1}= 0$, so $\fm$ is a nilpotent ideal.
Thus it is contained in the Jacobson radical $\rad A$.
On the other hand, we have $A/\fm\cong k$, which is a field. It follows that $\fm$ is a maximal ideal of $A$, and hence $\fm=\rad A$. 
Therefore $A$ is a local algebra with unique maximal ideal $\fm$.
\end{proof}

\begin{lem}\label{llode}
Let $A$ and $B$ be finite-dimensional local algebras.
If $\D^b(\mod A) \simeq \D^b(\mod B)$, then $A \cong B$ as algebras.
\end{lem}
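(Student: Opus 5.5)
Let $F\colon \D^b(\mod A) \xrightarrow{\sim} \D^b(\mod B)$ be a triangle equivalence. The plan has three steps: show that $F(A)$ is a shifted projective generator of $\mod B$, deduce $\End_B(P) \cong A$, and then use locality of $B$ to conclude $P \cong B^{\oplus n}$ with $n=1$.

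\textbf{Step 1: identify $\K^b(\proj)$ intrinsically.} Inside $\D^b(\mod A)$ for a finite-dimensional algebra $A$, the perfect complexes are exactly the objects $X$ such that for every $Y$ we have $\Hom(X, Y[i]) = 0$ for $|i| \gg 0$. This is the standard Rickard/Happel characterization: test against the simple modules, whose total cohomology detects finite projective dimension. Since this property is preserved by any triangle equivalence, $F$ restricts to an equivalence $\K^b(\proj A) \simeq \K^b(\proj B)$. Set $T = F(A)$. Then $T$ is a tilting complex in $\K^b(\proj B)$: it satisfies $\Hom(T, T[i]) = 0$ for $i \neq 0$, it generates $\K^b(\proj B)$ as a thick subcategory, and $\End(T) \cong A$.

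\textbf{Step 2: use locality to make $T$ a shifted projective module.} Because $A$ is local, $A$ is indecomposable in $\K^b(\proj A)$, so $T$ is indecomposable. Because $B$ is local, every finitely generated projective $B$-module is free, $B^{\oplus m}$. Represent $T$ by a minimal complex of projectives, concentrated in degrees $r \le i \le s$ with $T^r \neq 0 \neq T^s$. Suppose $r < s$. Consider a nonzero map $T \to T[s-r]$ that sends the nonzero module $T^r$ in degree $r$ of $T$ onto the term $T^s$, which sits in degree $r$ of $T[s-r]$. Concretely, compose the projection $T^r \to B$ onto a free summand with $B \to T^s$, choosing the latter to have image in $\ker$ of the relevant differential, i.e.\ a map landing in $\mathrm{soc}$-type elements; $B$ is local, so nonzero socle elements are available. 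This gives a chain map whose compatibility conditions are trivial because nothing lies beyond degree $s$ in $T[s-r]$ or below degree $r$ in $T$. By minimality, the differentials have image in $\rad$, so a map with image outside $\rad T^s$ cannot be null-homotopic. Choose the map accordingly, or argue with the lowest and highest terms directly. This shows $\Hom(T, T[s-r]) \neq 0$, contradicting the tilting property. Hence $r = s$, and $T \cong B^{\oplus n}[-r]$.

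\textbf{Step 3: conclude.} From Step 2, $A \cong \End(T) \cong M_n(B)$. Since $A$ is local, $M_n(B)$ has no nontrivial idempotents, which forces $n = 1$, so $A \cong B$.

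\textbf{Main obstacle.} The delicate point is the nonvanishing of $\Hom(T, T[s-r])$ in Step 2. The cleanest route is a rank argument over $B/\rad B = k'$, a division algebra. Apply $-\otimes_B B/\rad B$ to the minimal complex. All its differentials then vanish, since minimality means the differentials have image in the radical. Take a nonzero $\phi\colon T^r \to T^s$ factoring as $T^r \to T^r/\rad T^r \to \mathrm{soc}(B) \subseteq B \to T^s$. Then any homotopy $h$ that would make $\phi$ null-homotopic would have components through $T^{r+1}$ or $T^{s-1}$ composed with radical differentials. I expect this to require care: one shows that such a homotopy would make $\phi$ lie in $\rad^2$-type terms, which is impossible by choosing $\phi$ so that its image is not in $\rad\cdot$(image of the differential). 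If this direct argument gets messy, the fallback is the known fact that a tilting complex over a local algebra is a shifted progenerator, via Rouquier–Zimmermann / Rickard. Alternatively, one can pass to the ungraded Grothendieck group argument together with indecomposability.
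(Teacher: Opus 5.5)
Your route is valid, but it differs from the paper's. The paper cites \cite[Corollary~2.13]{RZ} to get directly that $A$ and $B$ are Morita equivalent. It then runs the argument of your Step~3 with the roles swapped: $B\cong\End_A(P)\cong M_n(A)$ for a progenerator $P\cong A^{\oplus n}$, and locality of $B$ forces $n=1$.

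Your Steps~1--2 instead reprove by hand the special case of Rouquier--Zimmermann that is needed. The intrinsic characterization of perfect complexes puts $T=F(A)$ in $\K^b(\proj B)$; it is correct as you state it, by testing against the finitely many simples. A minimal-complex argument then shows that $T$ is a shifted free module. This is self-contained, and it uses only $T\neq 0$, $\Hom(T,T[i])=0$ for $i\neq 0$, and $\End(T)\cong A$; generation and indecomposability of $T$ play no role. The paper's version is shorter but black-boxes exactly this step.

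The execution of Step~2, however, is muddled, and the version in your ``Main obstacle'' paragraph would not work.
\begin{itemize}
\item There is no kernel condition to arrange. A chain map $T\to T[s-r]$ has only the component $\phi^r\colon T^r\to T^s$, and the chain-map identities are vacuous since $T^{r-1}=0=T^{s+1}$.
\item Making $\phi^r$ land in $\mathrm{soc}(B)$ is the wrong choice. Unless $B$ is a division algebra, $\mathrm{soc}(B_B)\subseteq\rad B$, so such a $\phi$ has image in $T^s\rad B$ and may well be null-homotopic. For instance, take $B=k[x]/(x^2)$ and $T=(B\xrightarrow{x}B)$ in degrees $0,1$; the map $\phi^0=x$ equals $h^1d^0$ with $h^1=\id$.
\end{itemize}

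The right choice is the one you state in passing. Let $\phi^r$ be the projection of $T^r$ onto a free summand $B$, followed by the inclusion of $B$ as a free summand of $T^s$. If $\phi$ were null-homotopic, then $\phi^r=\pm d^{s-1}h^r+h^{r+1}d^r$. By minimality both terms have image in $T^s\rad B$; for the second term, use $h^{r+1}(T^{r+1}\rad B)\subseteq T^s\rad B$. But $\phi^r$ does not have image in $T^s\rad B$. Hence $\Hom(T,T[s-r])\neq 0$, contradicting $\Hom(T,T[i])=0$ for $i=s-r>0$.

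So this is a two-line computation rather than an obstacle. It is also exactly where locality of $B$ enters: it guarantees that the end terms $T^r$ and $T^s$ share an indecomposable projective summand. That can fail over non-local algebras, where tilting complexes need not be stalks. With this correction your proof is complete.
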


\begin{proof}
Since $A$ is a local algebra, the equivalence $\D^b(\mod A) \simeq \D^b(\mod B)$ implies that $A$ and $B$ are Morita equivalent
by \cite[Corollary~2.13]{RZ}.
Hence there exists a progenerator $P \in \mod A$ such that $\End_A(P) \cong B$.
Since $A$ is local, every finitely generated projective right $A$-module is free. Thus $P \cong A^{\oplus n}$ for some $n\geq 1$, and hence
$B\cong \End_A(A^{\oplus n}) \cong M_n(A)$.
Since $B$ is local, $M_n(A)$ must be local. This is possible only when $n=1$. Therefore $A\cong B$.
\end{proof}

\begin{lem}\label{lung}
Let $A$ and $B$ be finite-dimensional connected graded  algebras.
Assume that both $A$ and $B$ are generated in degree $1$.
Then $A$ and $B$ are isomorphic as graded algebras
if and only if they are isomorphic as (ungraded) algebras.
\end{lem}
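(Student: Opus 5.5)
The forward direction is trivial: a graded isomorphism is in particular an isomorphism of ungraded algebras. For the converse, the plan is to recover the grading from the ungraded algebra structure via the radical filtration. Lemma \ref{lcolo} applies to both $A$ and $B$, so each is local, with unique maximal ideals $\fm_A = A_{\geq 1}$ and $\fm_B = B_{\geq 1}$. Let $\phi\colon A \to B$ be an ungraded algebra isomorphism. Since the unique maximal ideal is intrinsic, $\phi(\fm_A)=\fm_B$, and hence $\phi(\fm_A^n)=\fm_B^n$ for all $n\ge 0$.

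Next we use generation in degree $1$. Because $A = k\langle A_1\rangle$, we have $\fm_A^n = A_{\geq n}$ for all $n$: the inclusion $\fm_A^n\subset A_{\ge n}$ is clear, and conversely each element of $A_m$ with $m\ge n$ is a sum of products of $m$ elements of $A_1$, so it lies in $\fm_A^n$. Thus the radical filtration coincides with the grading filtration, and the associated graded algebra $\gr_{\fm_A} A=\bigoplus_n \fm_A^n/\fm_A^{n+1}$ is canonically isomorphic to $A$ as a graded algebra, via $A_n \cong A_{\ge n}/A_{\ge n+1}$. The same holds for $B$.

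The isomorphism $\phi$ preserves the filtrations, so it induces an isomorphism of graded algebras $\gr\phi\colon \gr_{\fm_A}A\to\gr_{\fm_B}B$. It is bijective because $\phi$ restricts to isomorphisms $\fm_A^n\to\fm_B^n$ for every $n$, and hence induces isomorphisms on the successive quotients. Composing with the identifications of the previous paragraph yields a graded isomorphism $A\cong B$.

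The main obstacle is the identification $\fm_A^n=A_{\ge n}$, which is exactly where generation in degree $1$ enters. The only other point to check is multiplicativity of $\gr\phi$, and that follows because multiplication in the associated graded algebra is induced from multiplication in $A$. Finite-dimensionality is used only through Lemma \ref{lcolo}, to know that $\fm_A$ is intrinsically determined.
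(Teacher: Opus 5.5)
Your proposal is correct and follows essentially the same route as the paper. Both arguments use locality from Lemma \ref{lcolo}, the fact that an ungraded isomorphism preserves the powers of the maximal ideal, the identification $\fm_A^n = A_{\geq n}$ coming from generation in degree $1$, and passage to the associated graded algebras. Your write-up also spells out small details that the paper leaves implicit, namely the two inclusions giving $\fm_A^n = A_{\ge n}$ and the bijectivity and multiplicativity of $\gr\phi$.
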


\begin{proof}
The ``only if'' direction is clear.

For the converse, assume that there exists an isomorphism of ungraded
algebras $\vph: A \xrightarrow{\sim} B$.
By Lemma~\ref{lcolo}, the underlying algebras $A$ and $B$ are local algebras with unique maximal ideals $\fm_A = A_{\ge 1}$ and $\fm_B = B_{\ge 1}$, respectively.
Since $\vph$ is an algebra isomorphism, it preserves maximal ideals,
and hence $\vph(\fm_A) = \fm_B$.
It follows that $\vph(\fm_A^i) = \fm_B^i$ for all $i \geq 1$.
Therefore, $\vph$ induces an isomorphism of associated graded algebras with respect to the $\fm$-adic filtrations:
\[
\gr(\vph): \gr_{\fm_A} A
\xrightarrow{\sim}
\gr_{\fm_B} B.
\]
Since $A$ is generated in degree $1$, we have $\fm_A^i = A_{\ge i}$
for all $i \geq 1$, and similarly $\fm_B^i = B_{\ge i}$.
Hence the associated graded algebras
$\gr_{\fm_A} A$ and $\gr_{\fm_B} B$
coincide with $A$ and $B$, respectively, as graded algebras.
Via these identifications, the isomorphism $\gr(\varphi)$ yields
a graded algebra isomorphism $A \xrightarrow{\sim} B$.
This completes the proof.
\end{proof}

We now proceed to the proof of Theorem \ref{tmain2}.

\begin{thm}[Theorem \ref{tmain2}]\label{tmain2a}
Let $S$ and $R$ be connected graded Koszul AS-regular algebras,
and let $\sigma\in\GrAut(S)$ and $\tau\in\GrAut(R)$.
Consider the graded bimodules
$L := S_\sigma(-1)$ in $\GrMod S^{\en}$ and $N := R_\tau(-1)$ in $\GrMod R^{\en}$. Then the following conditions are equivalent:
\begin{enumerate}[label=(\roman*)]
\item $\uCM(S \ltimes L) \simeq \uCM(R \ltimes N)$ as triangulated categories.
\item $S^{\sigma^{-1}} \cong R^{\tau^{-1}}$ as graded algebras.
\end{enumerate}
In particular, if these conditions hold, then there is an equivalence
$\GrMod S \simeq \GrMod R$.
\end{thm}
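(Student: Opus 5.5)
The plan is to reduce both directions to the three lemmas just proved, via Theorem \ref{tmain1a}.

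For (ii)$\Rightarrow$(i): a graded isomorphism $S^{\sigma^{-1}}\cong R^{\tau^{-1}}$ of Koszul algebras gives a graded isomorphism of their Koszul duals, $(R^{\tau^{-1}})^!\cong (S^{\sigma^{-1}})^!$. Hence $\D^b(\mod ((S^{\sigma^{-1}})^!)^{\op})\simeq \D^b(\mod ((R^{\tau^{-1}})^!)^{\op})$, and Theorem \ref{tmain1a} applied on both sides yields (i). Alternatively, one can argue more directly: $A:=S^{\sigma^{-1}}[x]/(x^2)\cong R^{\tau^{-1}}[x]/(x^2)$, so Lemma \ref{lts} gives an equivalence at the level of $\uCM$.

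For (i)$\Rightarrow$(ii): Theorem \ref{tmain1a} turns (i) into a triangle equivalence
\[
\D^b(\mod \Lambda_S^{\op})\simeq \D^b(\mod \Lambda_R^{\op}),
\]
where $\Lambda_S:=(S^{\sigma^{-1}})^!$ and $\Lambda_R:=(R^{\tau^{-1}})^!$. These algebras have the following properties.
\begin{itemize}
\item By Proposition \ref{poz}, $S^{\sigma^{-1}}$ is a Koszul AS-regular algebra. It is connected graded, since the Zhang twist preserves the underlying graded vector space.
\item By Proposition \ref{pfds}, $\Lambda_S$ is finite-dimensional.
\item $\Lambda_S$ is connected graded, because its degree $0$ part is $S_0=k$.
\item $\Lambda_S$ is generated in degree $1$, being quadratic.
\end{itemize}
The same holds for $\Lambda_R$, and for $\Lambda_S^{\op}$ and $\Lambda_R^{\op}$. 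By Lemma \ref{lcolo}, $\Lambda_S^{\op}$ and $\Lambda_R^{\op}$ are local. Lemma \ref{llode} then gives an ungraded algebra isomorphism $\Lambda_S^{\op}\cong\Lambda_R^{\op}$, hence $\Lambda_S\cong\Lambda_R$. Lemma \ref{lung} upgrades this to a graded isomorphism $(S^{\sigma^{-1}})^!\cong (R^{\tau^{-1}})^!$.

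Finally, Koszul duality gives $(A^!)^!\cong A$ for a Koszul algebra $A$, functorially in graded isomorphisms. Dualizing once more therefore yields $S^{\sigma^{-1}}\cong R^{\tau^{-1}}$ as graded algebras.

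For the last assertion, Theorem \ref{tZ} gives $\GrMod S\simeq \GrMod S^{\sigma^{-1}}$ and $\GrMod R\simeq\GrMod R^{\tau^{-1}}$. Combining these with the graded isomorphism in (ii) gives $\GrMod S\simeq\GrMod R$.

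The main point needing care is the passage from a derived equivalence to a graded isomorphism of the Koszul duals. Lemma \ref{llode} needs locality, which Lemma \ref{lcolo} supplies. Lemma \ref{lung} needs generation in degree $1$, which follows from quadraticity. One must also check that passing to opposite algebras is harmless, which it is because isomorphisms of opposites give isomorphisms of the algebras themselves. The step from $\Lambda_S\cong\Lambda_R$ back to $S^{\sigma^{-1}}\cong R^{\tau^{-1}}$ uses the standard facts $A^{!!}\cong A$ for quadratic $A$ and that graded isomorphisms of Koszul algebras dualize.
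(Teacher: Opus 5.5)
Your proposal is correct and follows essentially the same route as the paper. Theorem~\ref{tmain1a} reduces (i) to a derived equivalence of the (opposite) Koszul duals; Lemmas~\ref{lcolo}, \ref{llode}, \ref{lung} and the duality $(-)^!$ then give (ii), and Theorem~\ref{tZ} gives the last assertion.

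The only differences are cosmetic. You apply Lemma~\ref{llode} directly to $\Lambda_S^{\op},\Lambda_R^{\op}$, which are themselves finite-dimensional connected graded and hence local, rather than first passing through $k$-duality as the paper does. Your alternative argument for (ii)$\Rightarrow$(i) via Lemma~\ref{lts} is also valid.
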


\begin{proof}
By Proposition \ref{poz}(3), (4), the algebras $S^{\sigma^{-1}}$ and $R^{\tau^{-1}}$ are connected graded Koszul AS-regular.
Hence, by Proposition \ref{pfds}, their Koszul duals
$(S^{\sigma^{-1}})^!$ and $(R^{\tau^{-1}})^!$
are finite-dimensional connected graded algebras,
and therefore local as ungraded algebras by Lemma \ref{lcolo}.
We now obtain the following sequence of equivalences:
\begin{align*}
\uCM(S \ltimes L) \simeq \uCM(R \ltimes N)
\ \overset{\ \ \text{Thm.\,\ref{tmain1}}\ \ }{\Longleftrightarrow}
\ &\D^b(\mod ((S^{\sigma^{-1}})^!)^{\op})
\simeq
\D^b(\mod ((R^{\tau^{-1}})^!)^{\op}) \\
\ \overset{\text{\ \ $k$-duality\ \ }}{\Longleftrightarrow}
\ &\D^b(\mod ((S^{\sigma^{-1}})^!))
\simeq
\D^b(\mod ((R^{\tau^{-1}})^!)) \\
\ \overset{\ \ \text{Lem.\,\ref{llode}}\ \ }{\Longleftrightarrow}
\ &(S^{\sigma^{-1}})^! \cong (R^{\tau^{-1}})^!
\quad \text{(as ungraded algebras)} \\
\ \overset{\ \ \text{Lem.\,\ref{lung}}\ \  }{\Longleftrightarrow}
\ &(S^{\sigma^{-1}})^! \cong (R^{\tau^{-1}})^!
\quad \text{(as graded algebras)} \\
\ \overset{\text{duality $(-)^!$}}{\Longleftrightarrow}
\ &S^{\sigma^{-1}} \cong R^{\tau^{-1}}
\quad (\text{as graded algebras}).
\end{align*}

Moreover, if $S^{\sigma^{-1}} \cong R^{\tau^{-1}}$
as graded algebras, then Theorem~\ref{tZ} yields
\[
\GrMod S \simeq \GrMod S^{\sigma^{-1}} \simeq \GrMod R^{\tau^{-1}} \simeq \GrMod R.
\]
This proves the last assertion.
\end{proof}

Combining Theorem \ref{tmain2}  and Corollary \ref{cmain1},
we obtain the following.

\begin{cor}\label{cmain2}
Let $S$ and $R$ be Koszul AS-regular algebras
of dimensions $d$ and $d'$, respectively.
Let $\nu \in \GrAut(S)$ and $\mu \in \GrAut(R)$
be the Nakayama automorphisms.
Consider the following conditions:
\begin{enumerate}[label=(\alph*)]
\item $S \cong R$\ \textnormal{(as graded algebras)};
\item $\uCM(S \ltimes \omega_S(d-1))
\simeq
\uCM(R \ltimes \omega_R(d'-1))$;
\item $S^{\nu^{-1}} \cong R^{\mu^{-1}}$\ \textnormal{(as graded algebras)};
\item $\GrMod S \simeq \GrMod R$.
\end{enumerate}
Then the implications
$\textnormal{(a)} \Rightarrow
\textnormal{(b)} \Leftrightarrow
\textnormal{(c)} \Rightarrow
\textnormal{(d)}$
hold.
\end{cor}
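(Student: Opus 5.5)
The plan is to derive the three implications from Theorem \ref{tmain2a} and Corollary \ref{cmain1}, after rewriting the bimodules $\omega_S(d-1)$ and $\omega_R(d'-1)$ in the form covered by Theorem \ref{tmain2a}. Here $S$ and $R$ are connected graded, as in Corollary \ref{cmain1}. Since $S$ is Koszul AS-regular of dimension $d$, its Gorenstein parameter is $d$, so $\omega_S \cong S_\nu(-d)$ and hence $\omega_S(d-1)\cong S_\nu(-1)$ in $\GrMod S^{\en}$. Similarly, $\omega_R(d'-1)\cong R_\mu(-1)$.

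The parameter claim follows from the Koszul property. The linear resolution of $k$ has last term $S(-d)$, so dualizing gives $\Ext^d_S(k,S)\cong k(d)$. (This is the same computation used in the proof of Proposition \ref{pfds}, where $\Ext^d_S(S_0,S)$ is concentrated in degree $-d$.) An isomorphism of graded bimodules $L\cong L'$ induces a graded algebra isomorphism $S\ltimes L\cong S\ltimes L'$, so we may replace $\omega_S(d-1)$ by $S_\nu(-1)$ and $\omega_R(d'-1)$ by $R_\mu(-1)$ without changing the stable categories.

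With this identification, (b)$\Leftrightarrow$(c) is exactly Theorem \ref{tmain2a} applied with $\sigma=\nu$ and $\tau=\mu$. The implication (c)$\Rightarrow$(d) is the final assertion of Theorem \ref{tmain2a}, obtained from Theorem \ref{tZ} as $\GrMod S\simeq \GrMod S^{\nu^{-1}}\simeq\GrMod R^{\mu^{-1}}\simeq \GrMod R$.

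For (a)$\Rightarrow$(c), take a graded isomorphism $\phi\colon S\to R$. The main point to check is that the Nakayama automorphism is transported: $\mu=\phi\nu\phi^{-1}$. I would argue this from the intrinsic definition of $\omega_S$. Since $\omega_S = D\H^d_{\fm}(S)$ is built functorially from the algebra, $\phi$ induces an isomorphism between $\omega_S$ and the bimodule $\omega_R$ with its actions pulled back along $\phi$. Thus $S_\nu(-d)\cong S_{\phi^{-1}\mu\phi}(-d)$ as graded bimodules, and uniqueness of the Nakayama automorphism in the connected case gives $\nu=\phi^{-1}\mu\phi$.

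It then remains to see that $\phi$ is an isomorphism of Zhang twists $S^{\nu^{-1}}\to R^{\mu^{-1}}$. For $a\in S_i$ and homogeneous $b$,
\[
\phi(a\,\nu^{-i}(b))=\phi(a)\,\phi\nu^{-i}\phi^{-1}(\phi(b))=\phi(a)\,\mu^{-i}(\phi(b)),
\]
which is exactly the twisted product in $R^{\mu^{-1}}$.

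Alternatively, (a)$\Rightarrow$(b) follows directly, since $\phi$ transports $S\ltimes\omega_S(d-1)$ to $R\ltimes\omega_R(d'-1)$ and $d=d'$. The step needing the most care is the transport of $\omega$ and $\nu$ under $\phi$; everything else is a direct citation of earlier results.
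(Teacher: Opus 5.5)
Your proposal is correct and follows the paper's route. The paper also rewrites $\omega_S(d-1)\cong S_\nu(-1)$ as in Corollary~\ref{cmain1} and applies Theorem~\ref{tmain2a} with $\sigma=\nu$ and $\tau=\mu$, treating (a)$\Rightarrow$(b) as immediate because $\omega$ (and hence $\nu$) is canonical. You simply make explicit the details the paper leaves implicit: the Gorenstein parameter being $d$, and the transport $\mu=\phi\nu\phi^{-1}$ that yields (a)$\Rightarrow$(c).
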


\section{Examples}

We present several examples illustrating the main results.
In this section, we work over an algebraically closed field $k$ of characteristic $0$.

To illustrate Corollary \ref{cmain2}, we first compute the Zhang twist $S^{\nu^{-1}}$ of a connected graded Koszul AS-regular algebra $S$ by the inverse of its Nakayama automorphism $\nu$.

By \cite[Theorem 9.2]{Va}, the Nakayama automorphism $\nu$ of a connected graded Koszul AS-regular algebra $S$ is determined by that of its Koszul dual $S^!$, which is a Koszul self-injective algebra. More precisely,
$\nu(a)=(-1)^{(d+1)i}\eta'(a)$
for $a \in S_i$, where $\eta'$ denotes the automorphism of $S$ induced by the Nakayama automorphism of $S^!$.

Moreover, the Nakayama automorphism $\eta$ of a connected graded Koszul 
self-injective algebra $A$ can be computed using the Frobenius 
structure of $A$. Namely, if $n$ denotes the socle degree, then for 
$a\in A_1$ and $b\in A_{n-1}$ one has $ab=b\eta(a)$,
which determines $\eta$; see \cite[Lemma 3.3]{Sm}.

We begin with the case where $S$ is a $2$-dimensional connected graded Koszul AS-regular algebra.
It is well-known that every such algebra is isomorphic to
$k\langle x,y\rangle/(xy-\alpha yx)$ $(0\neq \alpha \in k)$ or $k\langle x,y\rangle/(xy-yx-x^2)$.

\begin{ex}\label{exntd2}
(1) We first treat the case 
$S = k\langle x,y\rangle/(xy-\alpha yx) \ (0\neq \alpha \in k)$.
It is easy to see that $S^! \cong k\langle x,y\rangle/(x^2, \al xy+yx, y^2)$.
Since $xy = -\alpha^{-1}yx$ in $S^!$, 
the Nakayama automorphism $\eta$ of $S^!$ is given by 
$\eta(x)=-\alpha^{-1}x$ and $\eta(y)=-\alpha y$.
Thus the Nakayama automorphism $\nu$ of $S$ is determined by
$\nu(x)=\alpha^{-1}x$ and $\nu(y)=\alpha y$, 
and its inverse is given by
\[ \nu^{-1}(x)=\alpha x \quad\text{and}\quad \nu^{-1}(y)=\alpha^{-1}y. \]
In $S^{\nu^{-1}}$, we have
\[
x \ast y= x\nu^{-1}(y)= \al^{-1}xy= yx =\al^{-1}y\nu^{-1}(x)= \al^{-1}y\ast x,
\] 
so it follows that  $S^{\nu^{-1}}\cong k\langle x,y\rangle/(xy-\alpha^{-1} yx)$.
Hence we obtain $S^{\nu^{-1}} \cong S$ as graded algebras.

(2) We next treat the case 
$S = k\langle x,y\rangle/(xy- yx-x^2)$.
It is easy to see that $S^! \cong k\langle x,y\rangle/(xy+x^2, xy+yx, y^2)$.
Since $x^2=x(-x-2y)$, $xy=y(-x-2y)$, and $yx=-xy$ in $S^!$, 
the Nakayama automorphism $\eta$ of $S^!$ is given by 
$\eta(x)=-x-2y$ and $\eta(y)=-y$.
Thus the Nakayama automorphism $\nu$ of $S$ is determined by
$\nu(x)=x$ and $\nu(y)=2x+y$, 
and its inverse is given by
\[ \nu^{-1}(x)=x \quad\text{and}\quad \nu^{-1}(y)=-2x+y. \]
In $S^{\nu^{-1}}$, we have
\[
x \ast y= x\nu^{-1}(y)= -2x^2 +xy= -x^2+yx =-x\nu^{-1}(x)+ y\nu^{-1}(x)=-x\ast x +y\ast x,
\]
so it follows that  $S^{\nu^{-1}}\cong k\langle x,y\rangle/(xy-yx+x^2)$.
Hence we obtain $S^{\nu^{-1}} \cong S$ as graded algebras.

Combining (1), (2), and Corollary~\ref{cmain2}, 
we conclude that for $2$-dimensional Koszul AS-regular algebras 
$S$ and $R$, 
\[
S \cong R\ \text{(as graded algebras)}
\quad \Longleftrightarrow \quad
\uCM(S \ltimes \omega_S(1))
\simeq
\uCM(R \ltimes \omega_R(1)).
\]
\end{ex}

We now turn to typical 
$3$-dimensional connected graded Koszul AS-regular algebras,
namely skew polynomial algebras in three variables.

\begin{ex}\label{exntd3}
Let
$S = k\langle x,y,z\rangle/(xy-\al_1 yx, yz-\al_2 zy, zx-\al_3 xz)$
be a standard graded skew polynomial algebra in three variables, where $\al_1\al_2\al_3\neq 0$.
One easily checks that $S^! \cong k\langle x,y,z\rangle/(\al_1 xy+yx, \al_2 yz+zy, \al_3 zx+xz, x^2, y^2, z^2)$.
Since 
$x(yz)=\al_1^{-1}\al_3(yz)x$, 
$y(zx)=\al_2^{-1}\al_1(zx)y$, and
$z(xy)=\al_3^{-1}\al_2(xy)z$
in $S^!$, the Nakayama automorphism $\eta$ of $S^!$ is given by 
$\eta(x)=\al_1^{-1}\al_3 x$,
$\eta(y)=\al_2^{-1}\al_1 y$, and
$\eta(z)=\al_3^{-1}\al_2 z$.
Thus the Nakayama automorphism $\nu$ of $S$ is given by
$\nu(x)=\al_1^{-1}\al_3 x$,
$\nu(y)=\al_2^{-1}\al_1 y$, and
$\nu(z)=\al_3^{-1}\al_2 z$, 
and its inverse is given by
\[ \nu^{-1}(x)=\al_1\al_3^{-1} x, \quad
\nu^{-1}(y)=\al_2\al_1^{-1} y,\ \ \text{and}\ \ 
\nu^{-1}(z)=\al_3\al_2^{-1} z.\]
In $S^{\nu^{-1}}$, we have
\[
x \ast y= x\nu^{-1}(y)= \al_2\al_1^{-1} xy=\al_2 yx =\al_1^{-1}\al_2\al_3 y\nu^{-1}(x)= \al_1^{-1}\al_2\al_3 y\ast x,
\]
and similarly
$y \ast z = \al_1\al_2^{-1}\al_3 z \ast y$
and
$z \ast x = \al_1\al_2\al_3^{-1} x \ast z$.
Hence we obtain 
\begin{equation}\label{etiso}
S^{\nu^{-1}} \cong k\langle x,y,z\rangle/(xy-\al_1^{-1}\al_2 \al_3 yx, yz-\al_1\al_2^{-1} \al_3 zy, zx-\al_1\al_2 \al_3^{-1} xz).
\end{equation}

Let
$R = k\langle x,y,z\rangle/(xy-\be_1 yx, yz-\be_2 zy,\ zx-\be_3 xz)$
be another standard graded skew polynomial algebra with $\be_1\be_2\be_3\neq 0$.
The following criteria are known to hold.
\begin{itemize}
\item $S \cong R$ if and only if 
\begin{equation}\label{espi}
(\be_1, \be_2, \be_3) \in
\left\{
\begin{array}{ccc}
(\al_1, \al_2, \al_3),&(\al_3, \al_1, \al_2),&(\al_2, \al_3, \al_1),\\[3pt]
(\al_1^{-1}, \al_3^{-1}, \al_2^{-1}),&(\al_3^{-1}, \al_2^{-1}, \al_1^{-1}),&(\al_2^{-1}, \al_1^{-1}, \al_3^{-1}) 
\end{array}
\right\}.
\end{equation}
\item $\GrMod S \simeq \GrMod R$ if and only if 
\begin{equation}\label{espm}
\be_1\be_2\be_3=(\al_1\al_2\al_3)^{\pm 1}.
\end{equation}
\end{itemize}
(For the first statement, see \cite[Theorem 2.4]{Ga} or \cite[Lemma 2.3]{Vi}, and for the second, see \cite[Theorem 4.7, Example 4.10]{Mo} or \cite[Theorem 2.5]{Vi}.)
Let $\mu$ be the Nakayama automorphism of $R$.
By the first criterion above and \eqref{etiso}, we see that $S^{\nu^{-1}} \cong R^{\mu^{-1}}$ if and only if 
\begin{equation}\label{etcl1}
(\be_1^{-1}\be_2\be_3,\,
 \be_1\be_2^{-1}\be_3,\,
 \be_1\be_2\be_3^{-1})
\in
\left\{
\begin{array}{ccc}
(\al_1^{-1}\al_2\al_3,\,
 \al_1\al_2^{-1}\al_3,\,
 \al_1\al_2\al_3^{-1}),\\[3pt]
(\al_1\al_2\al_3^{-1},\,
 \al_1^{-1}\al_2\al_3,\,
 \al_1\al_2^{-1}\al_3),\\[3pt]
(\al_1\al_2^{-1}\al_3,\,
 \al_1\al_2\al_3^{-1},\,
 \al_1^{-1}\al_2\al_3),\\[3pt]
(\al_1\al_2^{-1}\al_3^{-1},\,
 \al_1^{-1}\al_2^{-1}\al_3,\,
 \al_1^{-1}\al_2\al_3^{-1}),\\[3pt]
(\al_1^{-1}\al_2^{-1}\al_3,\,
 \al_1^{-1}\al_2\al_3^{-1},\,
 \al_1\al_2^{-1}\al_3^{-1}),\\[3pt]
(\al_1^{-1}\al_2\al_3^{-1},\,
 \al_1\al_2^{-1}\al_3^{-1},\,
 \al_1^{-1}\al_2^{-1}\al_3)
\end{array}
\right\}.
\end{equation}
It is straightforward to check that \eqref{etcl1} is equivalent to
\begin{equation}\label{etcl2}
(\be_1^2,\, \be_2^2,\, \be_3^2,\, \be_1\be_2\be_3)
\in
\left\{
\begin{array}{ccc}
(\al_1^2,\,\al_2^2,\,\al_3^2,\,\al_1\al_2\al_3),\\[3pt]
(\al_3^2,\,\al_1^2,\,\al_2^2,\,\al_1\al_2\al_3),\\[3pt]
(\al_2^2,\,\al_3^2,\,\al_1^2,\,\al_1\al_2\al_3),\\[3pt]
(\al_1^{-2},\,\al_3^{-2},\,\al_2^{-2},\,(\al_1\al_2\al_3)^{-1}),\\[3pt]
(\al_3^{-2},\,\al_2^{-2},\,\al_1^{-2},\,(\al_1\al_2\al_3)^{-1}),\\[3pt]
(\al_2^{-2},\,\al_1^{-2},\,\al_3^{-2},\,(\al_1\al_2\al_3)^{-1})
\end{array}
\right\}.
\end{equation}
From the above, we obtain the implications
\[ \eqref{espi} \Longrightarrow \eqref{etcl2} \Longrightarrow \eqref{espm},
\]
while the converses do not hold in general.
These illustrate the implications
\[(a) \Longrightarrow (c) \Longrightarrow (d)\]
in Corollary \ref{cmain2}.
In particular, it follows that the converses of
\[(a) \Longrightarrow (b)\quad \text{and} \quad (b) \Longrightarrow (d)\]
do not hold in general.
\end{ex}

We next consider examples from another perspective.
Let $S$ be a Koszul AS-regular algebra of dimension $d\ge 1$, $\sigma \in \GrAut(S)$, and $L := S_\sigma(-1) \in \GrMod S^{\en}$.
Then the Jacobson radical of $(S^{\sigma^{-1}})^!$ is nonzero, so $(S^{\sigma^{-1}})^!$ is not semisimple. 
Since $(S^{\sigma^{-1}})^!$ is a self-injective algebra, it follows that $\gldim (S^{\sigma^{-1}})^! = \infty$.
Therefore, $\uCM(S \ltimes L) \simeq \D^b(\mod ((S^{\sigma^{-1}})^!)^{\op})$ has neither a Serre functor nor Auslander-Reiten triangles.
On the other hand, the subcategory
$\thick(\Omega^d (S \ltimes L)_0(d)) \simeq \K^b(\proj ((S^{\sigma^{-1}})^!)^{\op})$
admits a Serre functor and Auslander-Reiten triangles by Theorem \ref{tmain1a}.

\begin{ex}\label{eAR1}
Consider the case $S = k[x]$ with $\deg x = 1$.
Then $S \ltimes \omega_S$ is given by
$A = k[x,y]/(y^2)$ with $\deg x = \deg y = 1$.
In this case, $\thick(\Omega^1 k(1)) \subset \uCM(A)$ coincides with $\uCMz(A)$,
the stable category of graded maximal Cohen-Macaulay modules that are locally free at all non-maximal prime ideals of $A$ by \cite[Proposition 2.5(d)]{BIY}.

For $i \ge 1$, let $I_i = \langle x^i, y \rangle$ be a graded ideal of $A$.
For $i \ge 1$ and $j \in \ZZ$, set
$X_{i,j} := \Omega^j I_i(i) \cong I_i(i-j)$.
Then \cite[Proposition 2.5(f)]{BIY} shows that the Auslander-Reiten quiver of $\thick(\Omega^1 k(1))$ is as follows:
\begin{align}\label{e.quiv}
\begin{split}
\xymatrix@R=0cm@C=0.275cm{
\cdots  & X_{1,2} \ar[dr] && X_{1,1} \ar[dr] && X_{1,0} \ar[dr] && X_{1,-1} \ar[dr] && X_{1,-2} \ar[dr] & \cdots \\
&\cdots & X_{2,2} \ar[dr] \ar[ur] && X_{2,1} \ar[dr] \ar[ur] && X_{2,0} \ar[dr] \ar[ur] && X_{2,-1} \ar[dr] \ar[ur] && X_{2,-2} & \cdots \\
\cdots & X_{3,3} \ar[dr] \ar[ur] && X_{3,2} \ar[dr] \ar[ur]&& X_{3,1} \ar[dr] \ar[ur] && X_{3,0} \ar[dr] \ar[ur] && X_{3,-1} \ar[dr] \ar[ur]& \cdots \\
&\cdots & X_{4,3} \ar[ur] && X_{4,2} \ar[ur] && X_{4,1}\ar[ur] && X_{4,0} \ar[ur] && X_{4,-1} &\cdots \\
&& \vdots && \vdots && \vdots && \vdots && \vdots
}
\end{split}
\end{align}
\end{ex}

\begin{ex}\label{eAR2}
We consider a generalization of Example \ref{eAR1}.
Let $R=k[x]$ with $\deg x=1$.
Define a graded algebra automorphism $\varphi \in \GrAut(R)$ by $\varphi(x) = \zeta x$, where $\zeta$ is a primitive $n$-th root of unity.
Let $G=\ang{\vph} \leq \GrAut (R)$.
Consider the graded skew group algebra $S=R *G$.
By \cite[Proposition 2.27]{MUs}, $S$ is a Koszul AS-regular algebra of dimension $1$.
We see that $S^! \cong R^! * G \cong k[z]/(z^2)*G\cong kQ/(z^2)$, where $Q$ is the quiver
\[\small
\begin{xy}
0;<2.4pt,0pt>:<0pt,2.4pt>:: 
(0,0) *+{n}="a",
(10,0) *+{1}="b",
(17,-7) *+{2}="c",
(17,-17) *+{3}="d",
(10,-24) *+{4}="e",
(0,-24) *+{5}="f",
(-7,-17) *+{6}="g",
(-7,-7) *+{n-1}="h",
\ar "a";"b"^{z}
\ar "b";"c"^{z}
\ar "c";"d"^{z}
\ar "d";"e"^{z}
\ar "e";"f"^{z}
\ar "f";"g"^{z}
\ar@{.} "g";"h"
\ar "h";"a"^{z}
\end{xy}
\]
Let $A =S \ltimes S(-1) \cong S[y]/(y^2)$ with $\deg y = 1$. Then $\thick(\Omega^1 A_0(1)) \subset \uCM(A)$ is triangle equivalent to $\K^b(\proj kQ/(z^2))$ by Theorem \ref{tmain1a}.
By \cite[Proposition 2.5(c)]{BIY}, we see that the Auslander-Reiten quiver of $\thick(\Omega^1 A_0(1))$ consists of $n$ components, each of which has the same shape as \eqref{e.quiv}.
\end{ex}

\begin{ex}\label{eAR3}
Let $S = k\ang{x,y}/(xy+yx)$ be the standard graded $(-1)$-skew polynomial algebra in two variables.
By Example \ref{exntd2}(1), we have $S \cong S^{\nu^{-1}}$ and $S^! \cong k[x,y]/(x^2,y^2)$.
By Corollary \ref{cmain1}, $\thick(\Omega^2 k(2)) \subset \uCM(S \ltimes \om_S(1))$ is equivalent to $\K^b(\proj k[x,y]/(x^2,y^2))$.
Since $\K^b(\proj k[x,y]/(x^2,y^2))$ has a Serre functor isomorphic to the identity, $\thick(\Omega^2 k(2))$ is a $0$-Calabi-Yau triangulated category.
In contrast to Example \ref{eAR1}, the Auslander-Reiten quiver of $\thick(\Omega^2 k(2))$ appears to be considerably more complicated.
\end{ex}

\section*{Acknowledgments}
The author is grateful to Kota Yamaura and Issei Enomoto for valuable discussions and comments.
This work was supported by JSPS KAKENHI Grant Number 26K06761.


\begin{thebibliography}{99}
\bibitem{AIR}
C. Amiot, O. Iyama, and I. Reiten,
\textit{Stable categories of Cohen-Macaulay modules and cluster categories},
Amer. J. Math. \textbf{137} (2015), no. 3, 813--857.

\bibitem{AW}
D. D. Anderson and M. Winders,
\textit{Idealization of a module},
J. Commut. Algebra \textbf{1} (2009), no. 1, 3--5.

\bibitem{AZ}
M. Artin and J. J. Zhang,
\textit{Noncommutative projective schemes},
Adv. Math. \textbf{109} (1994), no. 2, 228--287.

\bibitem{BGS}
A. Beilinson, V. Ginzburg, and W. Soergel,
\textit{Koszul duality patterns in representation theory},
J. Amer. Math. Soc. \textbf{9} (1996), no. 2, 473--527.

\bibitem{Bu}
R.-O. Buchweitz,
\textit{Maximal Cohen-Macaulay modules and Tate cohomology, With appendices and an introduction by Luchezar L. Avramov, Benjamin Briggs, Srikanth B. Iyengar and Janina C. Letz}, 
Math. Surveys Monogr., 262, American Mathematical Society, Providence, RI, 2021.

\bibitem{BEH}
R.-O. Buchweitz, D. Eisenbud, and J. Herzog, 
\textit{Cohen-Macaulay modules on quadrics},
Singularities, representation of algebras, and vector bundles (Lambrecht, 1985), 58--116. Lecture Notes in Math., 1273
Springer-Verlag, Berlin, 1987.

\bibitem{BIY}
R.-O. Buchweitz, O. Iyama, and  K. Yamaura,
\textit{Tilting theory for Gorenstein rings in dimension one},
Forum Math. Sigma \textbf{8} (2020), Paper No. e36, 37 pp.

\bibitem{Ch}
X.-W. Chen, 
\textit{Graded self-injective algebras ``are'' trivial extensions},
J. Algebra \textbf{322} (2009), no. 7, 2601--2606.

\bibitem{FGR}
R. M. Fossum, P. A. Griffith, and I. Reiten,
\textit{Trivial extensions of abelian categories:
Homological algebra of trivial extensions of abelian categories with applications to ring theory},
Lecture Notes in Math., Vol. 456, Springer-Verlag, Berlin-New York, 1975.

\bibitem{Ga}
J. Gaddis,
\textit{The isomorphism problem for quantum affine spaces, homogenized quantized Weyl algebras, and quantum matrix algebras},
J. Pure Appl. Algebra \textbf{221} (2017), no. 10, 2511--2524.

\bibitem{Han}
N. Hanihara,
Auslander correspondence for triangulated categories,
Algebra Number Theory \textbf{14} (2020), no. 8, 2037--2058.

\bibitem{Hapb}
D. Happel,
\textit{Triangulated categories in the representation theory of finite-dimensional algebras},
London Mathematical Society Lecture Note Series, vol. 119, Cambridge University Press, Cambridge, 1988.

\bibitem{Hap2}
D. Happel,
\textit{On Gorenstein algebras},
Representation theory of finite groups and finite-dimensional algebras (Bielefeld, 1991), 389--404, Progr. Math., 95
Birkh\"auser Verlag, Basel, 1991.

\bibitem{HIMO}
M. Herschend, O. Iyama, H. Minamoto, and S. Oppermann,
\textit{Representation theory of Geigle-Lenzing complete intersections},
Mem. Amer. Math. Soc. \textbf{285} (2023), no. 1412, vii+141 pp.

\bibitem{IKU}
O. Iyama, Y. Kimura, and K. Ueyama,
\textit{Cohen-Macaulay representations of Artin-Schelter Gorenstein algebras of dimension one},
preprint, \texttt{arXiv:2404.05925v4}.

\bibitem{IT}
O. Iyama and R. Takahashi,
\textit{Tilting and cluster tilting for quotient singularities},
Math. Ann. \textbf{356} (2013), no. 3, 1065--1105.

\bibitem{JoL}
P. J\o rgensen,
\textit{Local cohomology for non-commutative graded algebras},
Comm. Algebra \textbf{25} (1997), no. 2, 575--591.

\bibitem{JoC}
P. J\o rgensen,
\textit{Properties of AS-Cohen-Macaulay algebras},
J. Pure Appl. Algebra \textbf{138} (1999), no. 3, 239--249.

\bibitem{JoLF}
P. J\o rgensen,
\textit{Linear free resolutions over non-commutative algebras},
Compos. Math. \textbf{140} (2004), no. 4, 1053--1058.

\bibitem{JoB}
P. J\o rgensen, 
\textit{A noncommutative BGG correspondence},
Pacific J. Math. \textbf{218} (2005), no. 2, 357--377.

\bibitem{Le}
T. Levasseur,
\textit{Some properties of noncommutative regular graded rings},
Glasgow Math. J. \textbf{34} (1992), no. 3, 277--300.

\bibitem{MS}
R. Mart\'inez Villa and M. Saor\'in,
\textit{Koszul equivalences and dualities},
Pacific J. Math. \textbf{214} (2004), no. 2, 359--378.

\bibitem{MR}
J. C. McConnell and J. C. Robson,
\textit{Noncommutative Noetherian rings},
With the cooperation of L. W. Small, Revised edition.
Grad. Stud. Math., 30, American Mathematical Society, Providence, RI, 2001.

\bibitem{MY}
H. Minamoto and K. Yamaura,
\textit{On finitely graded Iwanaga-Gorenstein algebras and the stable categories of their (graded) Cohen-Macaulay modules}
Adv. Math. \textbf{373} (2020), 107228, 57 pp.

\bibitem{MoP}
I. Mori,
\textit{Rationality of the Poincar\'e series for Koszul algebras},
J. Algebra \textbf{276} (2004), no. 2, 602--624.

\bibitem{MoR}
I. Mori,
\textit{Riemann-Roch like theorem for triangulated categories}
J. Pure Appl. Algebra \textbf{193} (2004), no. 1-3, 263--285.

\bibitem{Mo}
I. Mori,
\textit{Noncommutative projective schemes and point schemes},
Algebras, Rings and Their Representations, 215--239,
World Scientific Publishing Co. Pte. Ltd., Hackensack, NJ, 2006.

\bibitem{MUs}
I. Mori and K. Ueyama, 
\textit{Stable categories of graded maximal Cohen-Macaulay modules
over noncommutative quotient singularities},
Adv. Math. \textbf{297} (2016), 54--92.

\bibitem{MUk} 
I. Mori and K. Ueyama, 
\textit{Noncommutative Kn\"orrer's periodicity theorem and noncommutative quadric hypersurfaces},
Algebra Number Theory \textbf{16} (2022), no. 2, 467--504.

\bibitem{RV}
I. Reiten and M. Van den Bergh,
\textit{Noetherian hereditary abelian categories satisfying Serre duality}
J. Amer. Math. Soc. \textbf{15} (2002), no. 2, 295--366.

\bibitem{RR}
M. L. Reyes and D. Rogalski,
\textit{Graded twisted Calabi-Yau algebras are generalized Artin-Schelter regular}
Nagoya Math. J. \textbf{245} (2022), 100--153.

\bibitem{RZ}
R. Rouquier and A. Zimmermann,
\textit{Picard groups for derived module categories},
Proc. London Math. Soc. (3) \textbf{87} (2003), no. 1, 197--225.

\bibitem{ST}
B. Shelton and C. Tingey,
\textit{On Koszul algebras and a new construction of Artin-Schelter regular algebras},
J. Algebra \textbf{241} (2001), no. 2, 789--798.

\bibitem{SY}
A. Skowro\'nski and K. Yamagata,
\textit{Frobenius algebras II: Tilted and Hochschild extension algebras},
EMS Textbk. Math., European Mathematical Society (EMS), Z\"urich, 2017. 

\bibitem{Sm}
S. P. Smith,
\textit{Some finite-dimensional algebras related to elliptic curves},
Representation theory of algebras and related topics (Mexico City, 1994), 315--348, 
CMS Conf. Proc. 19, Amer. Math. Soc., Providence, RI, 1996.

\bibitem{SV}
S. P. Smith and M. Van den Bergh,
\textit{Noncommutative quadric surfaces},
J. Noncommut. Geom. \textbf{7} (2013), no. 3, 817--856.

\bibitem{Va}
M. Van den Bergh, 
\textit{Existence theorems for dualizing complexes over non-commutative graded and filtered rings},
J. Algebra \textbf{195} (1997), no. 2, 662--679.

\bibitem{Vi}
J. Vitoria,
\textit{Equivalences for noncommutative projective spaces},
preprint, \texttt{arXiv:1001.4400v3}.

\bibitem{Ye}
A. Yekutieli,
\textit{Dualizing complexes over noncommutative graded algebras},
J. Algebra \textbf{153} (1992), no. 1, 41--84.

\bibitem{Zh}
J. J. Zhang,
\textit{Twisted graded algebras and equivalences of graded categories},
Proc. London Math. Soc. (3)  \textbf{72} (1996), no. 2, 281--311.
\end{thebibliography}
\end{document}